\newcommand{\g}{\mathfrak{g}}
\def\p{\mathfrak p}
\def\s{\mathfrak s}
\def\h{\mathfrak h}
\def\a{\mathfrak a}
\def\m{\mathfrak m}
\def\n{\mathfrak n}
\def\q{\mathfrak q}
\newtheorem*{thm}{Theorem}
\newtheorem*{prop}{Proposition}
\newtheorem*{lm}{Lemma}
\newtheorem*{Rq}{Remark}
\title[Symmetric semi-invariants for some  In\"on\"u-Wigner contractions]{Symmetric semi-invariants for some  In\"on\"u-Wigner contractions.}
\author{Florence Fauquant-Millet}
\address{Univ Lyon,
UJM Saint-Etienne\\
CNRS UMR 5208\\
Institut Camille Jordan\\
F- 42023 Saint-Etienne\\
France}
\email{florence.millet@univ-st-etienne.fr}
\begin{document}

\begin{abstract}

Let $\p$ be a proper parabolic subalgebra of a simple Lie algebra $\g$. Writing $\p=\mathfrak r\oplus\mathfrak m$ with $\mathfrak r$ being the Levi factor of $\p$ and $\m$  the nilpotent radical of $\p$, we may consider the semi-direct product $\tilde\p=\mathfrak r\ltimes(\m)^a$, where $(\m)^a$ is an abelian ideal of $\tilde\p$, isomorphic to $\m$ as an $\mathfrak r$-module. 
 Then $\tilde\p$ is a Lie algebra, which  is a special case of  In\"on\" u-Wigner contraction and may be considered as a degeneration of the parabolic subalgebra $\p$. Let $S(\tilde\p)$ be the symmetric algebra of $\tilde\p$ (it is equal to the symmetric algebra $S(\p)$ of $\p$) and consider the algebra of semi-invariants $Sy(\tilde\p)\subset S(\tilde\p)$ under the adjoint action of $\tilde\p$. Using what we call a generalized PBW filtration on a highest weight irreducible representation $V(\lambda)$ of $\g$,  induced by the standard degree filtration on $U(\m^-)$ (where $\m^-$ is the nilpotent radical of the opposite subalgebra $\p^-$ of $\p$) one obtains a lower bound for the formal character of  the algebra $Sy(\tilde\p)$, when the latter is well defined.

\end{abstract}

\maketitle

{\it Mathematics Subject Classification} : 16 W 22, 17 B 22, 17 B 35.

{\it Key words} : In\"on\" u-Wigner contraction,  parabolic subalgebra, symmetric invariants, semi-invariants.

\section{Introduction.}\label{Intro}

The base field $\Bbbk$ is algebraically closed of characteristic zero.

\subsection{The aim of the paper.}\label{Introbounds}

Let $\g$ be a simple Lie algebra over $\Bbbk$ and fix a Cartan subalgebra $\h$ of $\g$. Then choose a set $\pi$ of simple roots for $(\g,\,\h)$ and denote by $\mathfrak b$ the Borel subalgebra of $\g$ associated with it. Let $\p\supset\mathfrak b$ be a proper parabolic subalgebra of $\g$. Denote by $\n$, resp. $\n^-$, the maximal nilpotent subalgebra of $\g$ generated by all positive, resp. negative, root vectors, so that $\g=\n^-\oplus\h\oplus\n$ and $\mathfrak b=\h\oplus\n$. Let $\mathfrak r$ denote the Levi factor of $\p$ (so that $\mathfrak r$ is a reductive Lie algebra) and $\m$ the nilpotent radical of $\p$. Then one has that $\p=\mathfrak r\oplus\m$. Now consider the semi-direct product $\tilde\p=\mathfrak r\ltimes(\m)^a$  where $(\m)^a$ is isomorphic to $\m$ as  an $\mathfrak r$-module, the superscript $a$ meaning hat $(\m)^a$ is an abelian ideal of $\tilde\p$. The semi-direct product $\tilde\p$ is still a Lie algebra which may be viewed as a {\it degeneration} of the parabolic subalgebra $\p$. It is called an In\"on\" u-Wigner contraction, or a one-parameter contraction of $\p$ (see \cite[Sect. 4] {Y1}). Denoting by $\a'$ the derived subalgebra of any Lie algebra $\a$, one has that $\tilde\p'=\mathfrak r'\ltimes(\m)^a$.  

In this paper we are interested in the algebra $Sy(\tilde\p)$ of symmetric semi-invariants in the symmetric algebra $S(\tilde\p)$ of $\tilde\p$ under the adjoint action of $\tilde\p$, which is also equal to the algebra $S(\tilde\p)^{\tilde\p'}$ of symmetric invariants under the adjoint action of $\tilde\p'$. In some cases (especially when $\p$ is a maximal parabolic subalgebra), we have that $Sy(\tilde\p)=S(\tilde\p')^{\tilde\p'}$. For the natural Poisson structure on $S(\tilde\p)$, the algebra $Sy(\tilde\p)$ is also equal to the Poisson semicentre of $S(\tilde\p)$. Roughly speaking, we may view the algebra $Sy(\tilde\p)$ as a {\it degeneration} of the algebra of symmetric semi-invariants $Sy(\p)=S(\p)^{\p'}$ in $S(\p)$. The aim of  the present paper is to construct a lower bound for the formal character of $Sy(\tilde\p)$ (when the latter is well defined), which will be shown  to be equal to the lower bound for the formal character of $Sy(\p)$, as computed in \cite[Sect. 6]{FJ2} (see also \cite[Prop. 3.1]{FJ1}). 
We hope then, when $\p$ is a maximal parabolic subalgebra of $\g$, to compare this lower bound with an upper bound, given by an adapted pair of $\tilde\p'$ and to show that both bounds coincide : this will imply that in this case, the algebra $Sy(\tilde\p)$ is a polynomial algebra, for which we can give the number of algebraically independent generators, their weight and degree.

Similar semi-direct products were studied extensively by Panyushev and Yakimova in \cite{P}, \cite{PPY1}, \cite{PPY2}, \cite{PPY3}, \cite{Y1}, \cite{Y2}. In particular these authors studied the polynomiality of the algebra of symmetric invariants $S(\q)^{\q}$ for semi-direct products $\q=\a\ltimes V$ where $\a$ is a {\bf simple} Lie algebra  and $V$ is a finite-dimensional representation of $\a$. For any type of simple Lie algebra $\a$ (except in type ${\rm A}$ where their study is partial) they established a list of all representations $V$, up to isomorphism,  of $\a$ for which the algebra $S(\q)^{\q}$ is polynomial and they gave the number of algebraically independent generators.
Observe that in our paper we deal with a semi-direct product $\q=\a\ltimes V$ with $\a=\mathfrak r'$ being {\bf semisimple} (and not necessarily simple in general) and $V=\m$.
Note  that it is shown in \cite[Th. 1.1]{PPY1} that the bi-homogeneous components of highest degree relative to $\m$ of homogeneous elements in $S(\p')^{\p'}$  lie  in $S(\tilde\p')^{\tilde\p'}=S(\p')^{\tilde\p'}$. Moreover by \cite[Th. 3.8]{Y0},  if $S(\p')^{\p'}$ is polynomial, generated by a set of algebraically independent homogeneous generators satisfying further conditions, one may know whether $S(\p')^{\tilde\p'}$ is also polynomial : it happens
 if and only if the sum of their degrees relative to $\m$ is equal to $\dim\m$. Unfortunately, even when the degree is known for each generator of $S(\p')^{\p'}$, it does not seem to be easy to compute its degree relative to $\m$.

\subsection{The method}
The method we use in this paper is completely different from this of Panyushev and Yakimova. Our method is partly inspired by this used in \cite{FJ1}, \cite{FJ2} to study the polynomiality of the algebra of symmetric semi-invariants $Sy(\p)$. The study of the latter algebra will be called {\it the nondegenerate case}, while we will call the study of $Sy(\tilde\p)$ {\it the degenerate case}.

Our aim is  to construct a lower bound for the algebra $Sy(\tilde\p)$ of semi-invariants. This bound will be given by the algebra of matrix coefficients on some degenerate module built from  the irreducible highest weight $\g$-module $V(\lambda)$ of highest weight $\lambda$, for $\lambda\in P^+(\pi)$, where $P^+(\pi)$ is the set of dominant integral weights of $(\g,\,\h)$. 

Let us describe our method and our main result more precisely.
\begin{itemize}

\item In subsections \ref{PBW} and \ref{descr} we fix
$\lambda\in P^+(\pi)$ and 
 denote by $\p^-=\mathfrak r\oplus\m^-\supset\mathfrak b^-=\h\oplus\n^-$  the opposite parabolic subalgebra of $\p$, where $\m^-$ is the nilpotent radical of $\p^-$ and by $\tilde\p^-=\mathfrak r\ltimes(\m^-)^a$ the one-parameter contraction of $\p^-$. Then, inspired by the construction in \cite{FFL}, we define what we call a {\it generalized PBW filtration} $(\mathscr F_k(V(\lambda))_{k\in\mathbb N}$ on $V(\lambda)$, which is an increasing and exhaustive filtration on $V(\lambda)$,
 induced by the canonical (or standard degree) filtration $(U_k(\m^-))_{k\in\mathbb N}$ on the enveloping algebra $U(\m^-)$ of $\m^-$.

The associated graded space, that we call the {\it degenerate highest weight module associated with $\lambda$}, is denoted by $$\widetilde{V}(\lambda):=gr_{\mathscr F}(V(\lambda))=\bigoplus_{k\in\mathbb N}gr_k(V(\lambda))$$
where $gr_k(V(\lambda))=\frac{\mathscr F_k(V(\lambda))}{\mathscr F_{k-1}(V(\lambda))}$  for all $k\in\mathbb N$ with $\mathscr F_{-1}(V(\lambda)):=\{0\}$. 
 
If $v_{\lambda}$ is a nonzero vector of highest weight $\lambda$ in $V(\lambda)$, we denote by $V'(\lambda)$ the irreducible $U(\mathfrak r)$-submodule of $V(\lambda)$ generated by $v_{\lambda}$ and  by $\widetilde{V'}(\lambda)\subset\widetilde{V}(\lambda)$
 the canonical image of $V'(\lambda)$ in $\widetilde{V}(\lambda)$.
We will observe that, as $U(\mathfrak r)$-modules, we have $\widetilde{V}(\lambda)\simeq V(\lambda)$. 
Set $\tilde{v}_{\lambda}$ the canonical image of $v_{\lambda}$ in $\widetilde{V}(\lambda)$. We define a  left $U(\tilde\p^-)$-module structure on $\widetilde{V}(\lambda)$,
 for which we have that $\widetilde{V'}(\lambda)=U(\mathfrak r).\tilde{v}_{\lambda}$ and
that $$\widetilde{V}(\lambda)=U(\tilde\p^-).\tilde{v}_{\lambda}=S(\m^-).\widetilde{V'}(\lambda)=U(\tilde\p^-).\widetilde{V'}(\lambda).$$  

\item In subsections \ref{smash}, \ref{coadjointaction},  \ref{adjaction},
denoting by $T(\m)$ the tensor algebra of $\m$, we consider the associative algebra $A=T(\m)\string # U(\mathfrak r)$, which is the Hopf smash product of the left $U(\mathfrak r)$-algebra $T(\m)$ by the Hopf algebra $U(\mathfrak r)$, as defined for example in \cite[1.1.8]{J2}. As $T(\m)$ is also equipped with a coproduct, we obtain that this smash product $A$ also inherits a structure of a bialgebra. 
  
We then consider the coadjoint action, which we denote by $ad^*$, of $U(\tilde\p)$ on $\p^-\simeq\tilde\p^*$ (as vector spaces). Then  $ad^*$ extends uniquely  by derivation to an action of $U(\tilde\p)$ on $S(\p^-)$. From this action $ad^*$, we define what we call a {\it generalized adjoint action} $ad^{**}$ of $A$ on $U(\tilde\p^-)$, which coincides with the adjoint action  on $U(\tilde\p^-)$, when restricted to $U(\mathfrak r)$.

\item In subsections \ref{def} and \ref{C}, we consider spaces of matrix coefficients.

For $\lambda\in P^+(\pi)$, we set $\tilde v_{w_0\lambda}$ the canonical image in $\widetilde{V}(\lambda)$ of a chosen nonzero lowest weight vector in $V(\lambda)$ and by $\widetilde{V''}(\lambda)$ the $U(\mathfrak r)$-submodule of $\widetilde{V}(\lambda)$ generated by  $\tilde v_{w_0\lambda}$. We denote by $\widetilde{V}(\lambda)^*$  the dual space of $\widetilde{V}(\lambda)$. For all $\xi\in \widetilde{V}(\lambda)^*$ and $v\in\widetilde{V'}(\lambda)$, the matrix coefficient $c_{\xi,\,v}\in U(\tilde\p^-)^*$ is defined by :
$$c_{\xi,\,v}(u)=\xi( u.\,v)\;\;\hbox{\rm for\;all}\; u\in U(\tilde\p^-).$$

Then we define 
$\widetilde{C}_{\mathfrak p}(\lambda)$ to be the subspace of $U(\tilde\p^-)^*$ generated by
$$\{c_{\xi,\,v}\mid \xi\in\widetilde{V}(\lambda)^*,\,v\in \widetilde{V'}(\lambda)\}$$ and ${\widetilde C}_{\mathfrak r}(\lambda)$ to be the subspace of $\widetilde C_{\p}(\lambda)$ generated by $$\{c_{\xi,\,v}\mid \xi \in\widetilde{V^{''}}(\lambda)^*,\,v\in \widetilde{V}'(\lambda)\}.$$
We set $\widetilde{C}_{\p}=\sum_{\lambda\in P^+(\pi)}\widetilde{C}_{\mathfrak p}(\lambda)$ and ${\widetilde C}_{\mathfrak r}=\sum_{\lambda\in P^+(\pi)}{\widetilde C}_{\mathfrak r}(\lambda)$. We show that these are direct sums
and  that ${\widetilde C}_{\mathfrak r}$ is a subalgebra of $U(\tilde\p^-)^*$.

\item In subsection \ref{dualrepresentation}, we consider
the dual representation of $ad^{**}$, which defines a left $A$-module structure on $U(\tilde\p^-)^*$. When restricted to $U(\mathfrak r)$, the dual representation of $ad^{**}$ defines a left $U(\mathfrak r)$-module structure on every $\widetilde{C}_{\mathfrak r}(\lambda)$, $\lambda\in P^+(\pi)$, and then on $\widetilde C_{\mathfrak r}$, which coincides with the coadjoint representation.

\item In subsections \ref{polynomial} and \ref{filtrationC}, for all $\lambda\in P^+(\pi)$, we
denote by ${\widetilde C}_{\mathfrak r}(\lambda)^{U(\mathfrak r')}$, resp. ${\widetilde C}_{\mathfrak r}^{U(\mathfrak r')}$ the vector space, resp. the algebra, of invariants in ${\widetilde C}_{\mathfrak r}(\lambda)$, resp. in ${\widetilde C}_{\mathfrak r}$, by the coadjoint representation of $U(\mathfrak r')$. We have that
$${\widetilde C}_{\mathfrak r}^{U(\mathfrak r')}=\bigoplus_{\lambda\in P^+(\pi)}{\widetilde C}_{\mathfrak r}(\lambda)^{U(\mathfrak r')}.$$
Denote by $\pi'\subset\pi$  the subset of simple roots of $(\g,\h)$ associated with the parabolic subalgebra $\p$, set $\h_{\pi'}=\h\cap\p'$, and denote by $(\,\,,\,)$ the non degenerate symmetric bilinear form  on $\h^*\times\h^*$ induced by the Killing form on $\g$. 
 Since, for all $\lambda\in P^+(\pi)$, $\widetilde{V'}(\lambda)$ is an irreducible $U(\mathfrak r)$-module, the Jacobson density theorem implies that the $U(\mathfrak r)$-module $\widetilde{C}_{\mathfrak r}(\lambda)$ is isomorphic to the $U(\mathfrak r)$-module $\widetilde{V''}(\lambda)^*\otimes\widetilde{V'}(\lambda)$ where the latter is endowed with the diagonal action of $U(\mathfrak r)$. 
  It follows that, for all $\lambda\in P^+(\pi)$, ${\widetilde C}_{\mathfrak r}(\lambda)^{U(\mathfrak r')}$ is of dimension less or equal to one, and equal to one if and only if 
 $$(w_0'\lambda-w_0\lambda,\,\pi')=0$$
 where $w_0'$, resp. $w_0$, is the longest element in the Weyl group of $(\mathfrak r',\,\h_{\pi'})$, resp. of $(\g,\,\h)$.  As a consequence  we show (as in \cite[prop. 3.1]{FJ1}) that ${\widetilde C}_{\mathfrak r}^{U(\mathfrak r')}$ is a polynomial algebra, for which we can compute the weight of each  vector of a set of algebraically independent  generators.

\item In subsections \ref{Kf}, \ref{dualS}, \ref{Km} and \ref{isomtildep},   inspired by \cite[6.1]{FJ2}, one defines on the algebra $U(\tilde\p^-)^*$ what we call {\it the generalized Kostant filtration} $(\mathscr F_K^k(U(\tilde\p^-)^*))_{k\in\mathbb N}$,  which is a decreasing, exhaustive and separated ring filtration. This filtration is invariant under the action of $A$ given by the dual representation of $ad^{**}$. 

One denotes by $gr_K(U(\tilde\p^-)^*)=\bigoplus_{k\in\mathbb N} gr_K^k(U(\tilde\p^-)^*)$ the graded algebra associated with this filtration where, for all $k\in\mathbb N$, $$gr_K^k(U(\tilde\p^-)^*)=\frac{\mathscr F_K^k(U(\tilde\p^-)^*)}{\mathscr F_K^{k+1}(U(\tilde\p^-)^*)}.$$
The dual representation of $ad^{**}$ induces a left action of $A$ on this graded algebra and one checks that, for all $x\in\m$, for all $f\in\widetilde{C}_{\mathfrak r}\cap \mathscr F_K^k(U(\tilde\p^-)^*)$, one has for this action $$x.f\in\mathscr F_K^{k+1}(U(\tilde\p^-)^*)$$ that is, that 
\noindent\begin{equation}x.gr_K^k(f)=0\label{annulm}\tag{$\diamond$}\end{equation}  where $gr_K^k(f)$ denotes the canonical image of $f$ in $gr_K(U(\tilde\p^-)^*)$.

Then  for all $k\in\mathbb N$ and all vector space $V$, denoting by $S_k(V)$  the vector subspace of the symmetric algebra $S(V)$ of $V$ formed by all homogeneous polynomials of degree $k$, 
one defines a morphism  $\psi_k: gr_K^k(U(\tilde\p^-)^*)\longrightarrow S_k(\p^-)^*$.
 It is easily checked that actually $\psi_k$ is an isomorphism of left $U(\tilde\p)$-modules, where the left structure on $gr_K^k(U(\tilde\p^-)^*)$ is induced by the dual representation of $ad^{**}$ and where the left structure on $S_k(\p^-)^*$ is given by the dual representation of $ad^*$. With this structure, it is easily checked that
  $S_k(\p^-)^*$ is isomorphic to the $U(\tilde\p)$-module $S_k(\tilde\p)=S_k(\p)$ where  the action of $\tilde\p$ is the adjoint action which extends by derivation the Lie bracket in $\tilde\p$.
Thus we obtain an isomorphism of $U(\tilde\p)$-modules and of algebras from $gr_K(U(\tilde\p^-)^*)$ to $S(\tilde\p)$.

\item 

Denote  by $gr_K({\widetilde C}_{\mathfrak r}^{U(\mathfrak r')})$  the graded algebra associated with the induced generalized Kostant filtration on ${\widetilde C}_{\mathfrak r}^{U(\mathfrak r')}$.  The former may be viewed as a subalgebra of $gr_K(U(\tilde\p^-)^*)$, which by equation (\ref{annulm}) is invariant under the  action of $U(\tilde\p')$ induced by the action of $A$ on $U(\tilde\p^-)^*$ given by the dual representation of $ad^{**}$. Finally one can establish the main result of our paper (see subsection \ref{inj}). 
\begin{thm}
There is an injection of algebras and of $U(\h)$-modules from $gr_K({\widetilde C}_{\mathfrak r}^{U(\mathfrak r')})$ into the Poisson semicentre $Sy(\tilde\p)=S(\tilde\p)^{\tilde\p'}$. This implies a lower bound for the formal character of $Sy(\tilde\p)$, when the latter is well defined.
\end{thm}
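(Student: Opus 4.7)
The strategy is to use the algebra and $U(\tilde\p)$-module isomorphism $\psi=\bigoplus_k \psi_k\colon gr_K(U(\tilde\p^-)^*)\isomto S(\tilde\p)$ constructed in subsections \ref{Kf}--\ref{isomtildep}, and to verify that its restriction to the subalgebra $gr_K(\widetilde{C}_{\mathfrak r}^{U(\mathfrak r')})$ takes values in the $\tilde\p'$-invariants $S(\tilde\p)^{\tilde\p'}$. Since $\psi$ is an isomorphism of algebras and of $U(\tilde\p)$-modules (hence of $U(\h)$-modules), once one has the inclusion into $S(\tilde\p)^{\tilde\p'}$ the injectivity and the preservation of the algebra and $U(\h)$-module structures are automatic.

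First I would verify the $\mathfrak r'$-invariance of the image. The generalized Kostant filtration is $A$-stable, hence $U(\mathfrak r')$-stable; on $\widetilde{C}_{\mathfrak r}$ the restriction of the dual of $ad^{**}$ to $U(\mathfrak r')$ coincides with the coadjoint action, whose invariants define $\widetilde{C}_{\mathfrak r}^{U(\mathfrak r')}$. Since $U(\mathfrak r')$ acts semisimply, taking the associated graded commutes with taking $\mathfrak r'$-invariants, so that $gr_K(\widetilde{C}_{\mathfrak r}^{U(\mathfrak r')})$ is exactly the $\mathfrak r'$-invariant part of $gr_K(\widetilde{C}_{\mathfrak r})$. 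Because $\psi$ intertwines the $\mathfrak r'$-actions, one obtains $\psi(gr_K(\widetilde{C}_{\mathfrak r}^{U(\mathfrak r')}))\subset S(\tilde\p)^{\mathfrak r'}$.

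Next, the $\m$-annihilation is supplied by equation (\ref{annulm}): for every $x\in\m$ and every $f\in\widetilde{C}_{\mathfrak r}\cap\mathscr F_K^k(U(\tilde\p^-)^*)$, the induced action on the graded satisfies $x.\,gr_K^k(f)=0$. Transporting this via $\psi$, whose $\tilde\p$-equivariance identifies the action of $x\in\m$ with the adjoint action of $x$ on $S(\tilde\p)$, the image is annihilated by $\ad\m$. Since $\tilde\p'=\mathfrak r'\oplus (\m)^a$ as a vector space, combining the two statements yields $\psi(gr_K(\widetilde{C}_{\mathfrak r}^{U(\mathfrak r')}))\subset S(\tilde\p)^{\tilde\p'}=Sy(\tilde\p)$, and the previous paragraph takes care of injectivity and the algebra/$U(\h)$-module structures.

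For the lower bound on the formal character, I would combine this injection with the explicit description of $\widetilde{C}_{\mathfrak r}^{U(\mathfrak r')}$ from subsection \ref{polynomial}, where it is shown to be a polynomial algebra with an identified set of weights of algebraically independent generators. Since the generalized Kostant filtration consists of $U(\h)$-submodules, $gr_K(\widetilde{C}_{\mathfrak r}^{U(\mathfrak r')})$ has the same formal character as $\widetilde{C}_{\mathfrak r}^{U(\mathfrak r')}$ itself, and the injection then transfers this formal character as a lower bound for $Sy(\tilde\p)$. The main obstacle is essentially bookkeeping inside subsections \ref{Kf}--\ref{isomtildep}: one must check with care that the dual representation of $ad^{**}$ on the graded algebra matches, under $\psi$, the ordinary adjoint action of $\tilde\p$ on $S(\tilde\p)$. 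Once this identification is in place, the two clean facts---$U(\mathfrak r')$-invariance definitional on $\widetilde{C}_{\mathfrak r}^{U(\mathfrak r')}$, and $\m$-annihilation coming from (\ref{annulm})---assemble into the desired $\tilde\p'$-invariance on the symmetric side.
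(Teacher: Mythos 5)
Your proposal matches the paper's argument: the paper likewise deduces the theorem by combining the isomorphism $\tilde\psi$ of algebras and $U(\tilde\p)$-modules from $gr_K(U(\tilde\p^-)^*)$ onto $S(\tilde\p)$ with the inclusion $gr_K(\widetilde{C}_{\mathfrak r}^{U(\mathfrak r')})\subset (gr_K(U(\tilde\p^-)^*))^{U(\tilde\p')}$, where the $\m$-annihilation comes from equation (\ref{annulm}) and the $\mathfrak r'$-invariance is immediate, and then obtains the character bound from the separatedness of the filtration (which forces $gr_K(V)\simeq V$ on each finite-dimensional weight space). The only cosmetic difference is that you invoke semisimplicity of the $U(\mathfrak r')$-action where the paper only needs the trivial inclusion $gr_K$ of invariants into invariants of $gr_K$.
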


\end{itemize}

\section{Notation.}

\subsection{General notation}\label{genotation}

Let $\g$ be a simple Lie algebra over $\Bbbk$, $\h$ be a Cartan subalgebra of $\g$ and choose a set $\pi$ of simple roots for $(\g,\,\h)$. Denote by $\Delta^\pm$  the set of positive, resp. negative, roots of $(\g,\,\h)$ with respect to $\pi$ and $\Delta=\Delta^+\sqcup\Delta^-$ the set of roots of $(\g,\,\h)$. 
Denote by $[\,,\,]$ the Lie bracket in $\g$ and by $\langle\,,\,\rangle$ the natural duality between $\h$ and $\h^*$. Then for all root $\alpha\in\Delta$, set $\g_{\alpha}=\{x\in\g\mid\forall h\in\h,\,[h,\,x]=\langle h,\,\alpha\rangle x\}$ and fix a nonzero root vector $x_{\alpha}$ in $\g_{\alpha}$.

Denote by $\n=\bigoplus_{\alpha\in\Delta^+}\g_{\alpha}$, resp. $\n^-=\bigoplus_{\alpha\in\Delta^-}\g_{\alpha}$, the maximal nilpotent subalgebra of $\g$ generated by positive, resp. negative, root vectors,  so that $\g=\n\oplus\h\oplus\n^-$. Let $\mathfrak b=\n\oplus\h$ be the Borel subalgebra of $\g$.

For each subset $\pi'$ of $\pi$, we denote by $\Delta^\pm_{\pi'}$ the subset of $\Delta^\pm$ generated by $\pi'$ that is,
$\Delta^\pm_{\pi'}=(\pm\mathbb N\pi')\cap\Delta^\pm$. Set $\n_{\pi'}=\bigoplus_{\alpha\in\Delta^+_{\pi'}}\g_{\alpha}$, resp. $\n^-_{\pi'}=\bigoplus_{\alpha\in\Delta^-_{\pi'}}\g_{\alpha}$. Then the (standard) parabolic subalgebra $\p\supset\mathfrak b$ of $\g$ associated with $\pi'$ is 
$$\p=\n\oplus\h\oplus\n^-_{\pi'}.$$

The Levi factor $\mathfrak r$ of $\p$ is $$\mathfrak r=\n_{\pi'}\oplus\h\oplus\n^-_{\pi'}$$ and its derived subalgebra (which is semisimple) is
$\mathfrak r'=\n_{\pi'}\oplus\h_{\pi'}\oplus\n^-_{\pi'}$, where $\h_{\pi'}=\h\cap \p'$, with $\p'=[\p,\,\p]$ being the derived subalgebra of $\p$. If for all $\alpha\in\pi$, $\alpha\check\null$ denotes the coroot associated with $\alpha$, we have that $\h_{\pi'}$ is the $\Bbbk$-vector space generated by the coroots $\alpha\check\null$, with $\alpha\in\pi'$.

The longest element of the Weyl group $W$, resp. $W'$, of $(\g,\,\h)$, resp. of $(\mathfrak r',\,\h_{\pi'})$, is denoted by $w_0$, resp. $w_0'$. 

Set $\h^{\pi\setminus\pi'}=\{h\in\h\mid \langle h,\,\pi'\rangle=0\}$ so that $\h=\h_{\pi'}\oplus\h^{\pi\setminus\pi'}$.
Denote by $\m$ the nilpotent radical of $\p$, so that $\p=\mathfrak r\oplus\m$. We have that $\n=\n_{\pi'}\oplus\m$ and that $\m=\bigoplus_{\alpha\in\Delta^+\setminus\Delta^+_{\pi'}}\g_{\alpha}$.
The opposite subalgebra $\p^-$ of $\p$ is the parabolic subalgebra of $\g$ defined by
$$\p^-=\n^-\oplus\h\oplus\n_{\pi'}.$$ We denote by $\m^-$ the nilpotent radical of $\p^-$ (so that $\p^-=\mathfrak r\oplus\m^-$). The Killing form $K$ on $\g\times\g$ induces an isomorphism between the dual space $\p^*$ of $\p$ and the vector space $\p^-$, since $K$ is non degenerate on $\p\times\p^-$. Moreover since $K$ is also non degenerate on $\h\times\h$, it induces a non degenerate symmetric bilinear form $(\,\,,\,)$ on $\h^*\times\h^*$ which is invariant under the action of $W$ (see for instance \cite[5.2.2]{FJ3}).

For all $\alpha\in\pi$, resp. $\alpha\in\pi'$, let $\varpi_{\alpha}$, resp. $\varpi'_{\alpha}$, be the fundamental weight associated with $\alpha$ with respect to $(\g,\,\h)$, resp. with respect to $(\mathfrak r',\,\h_{\pi'})$. Then $P(\pi)=\sum_{\alpha\in\pi}\mathbb Z\varpi_{\alpha}$, resp. $P(\pi')=\sum_{\alpha\in\pi'}\mathbb Z\varpi'_{\alpha}$, is the weight lattice of $(\g,\,\h)$, resp. $(\mathfrak r',\,\h_{\pi'})$. Moreover $P^+(\pi)=\sum_{\alpha\in\pi}\mathbb N\varpi_{\alpha}$, resp. $P^+(\pi')=\sum_{\alpha\in\pi'}\mathbb N\varpi'_{\alpha}$, is the set of dominant integral weights of $(\g,\,\h)$, resp. $(\mathfrak r',\,\h_{\pi'})$.
By \cite[2.5]{FJ2}, there exists some positive integer $r$ such that

\begin{equation}P(\pi)\subset P(\pi')\oplus\frac{1}{r}\sum_{\alpha\in\pi\setminus\pi'}\mathbb Z\varpi_{\alpha}\label{projP}\end{equation}
and for all $\alpha\in\pi'$, the projection of  $\varpi_{\alpha}$ in $P(\pi')$ with respect to this decomposition (\ref{projP}) is $\varpi'_{\alpha}$.
For $\lambda=\sum_{\alpha\in\pi}m_{\alpha}\varpi_{\alpha}\in P(\pi)$ ($m_{\alpha}\in\mathbb Z$ for each $\alpha\in\pi$), we denote by $\lambda'=\sum_{\alpha\in\pi'}m_{\alpha}\varpi'_{\alpha}$ its projection in $P(\pi')$ with respect to the decomposition (\ref{projP}).

For any finite-dimensional Lie algebra $\a$, we denote by $U(\a)$ its universal enveloping algebra and by $S(\a)$ its symmetric algebra, which may be viewed as the (commutative) graded algebra associated with the canonical filtration $(U_k(\a))_{k\in\mathbb N}$ on $U(\a)$ (see \cite[2.3]{D}). We may also identify $S(\a)$ with the algebra $\Bbbk[\a^*]$ of polynomial functions on the dual space $\a^*$ of $\a$. For all $k\in\mathbb N$, we denote by $S_k(\a)$ the vector subspace of $S(\a)$ formed by all homogeneous polynomials of degree $k$.

 For all $\lambda\in P^+(\pi)$,  the irreducible highest weight $\g$-module of highest weight $\lambda$ (which is obtained by quotienting  the corresponding Verma module by its largest proper sub-$\g$-module, as defined for example in \cite[7.1.11]{D}) is denoted by $V(\lambda)$ : recall (\cite[7.2.6]{D}) that this is a finite-dimensional $U(\g)$-module. We may pay attention that  (unlike the notation in \cite[7.1.4, 7.1.12]{D}) the highest weight of $V(\lambda)$ in our paper is $\lambda$ and not $\lambda-\rho$, where $\rho$ is the sum of all fundamental weights of $(\g,\,\h)$.

\subsection{Semi-direct product}\label{defsemi-direct}

Recall the parabolic subalgebra $\p=\mathfrak r\oplus\m$ and its opposite parabolic subalgebra $\p^-=\mathfrak r\oplus\m^-$, with $\m$, resp. $\m^-$, the nilpotent radical of $\p$, resp. $\p^-$.

We now consider the semi-direct product $\tilde\p=\mathfrak r\ltimes(\m)^a$, resp. $\tilde\p^-=\mathfrak r\ltimes(\m^-)^a$, where $(\m)^a$, resp. $(\m^-)^a$, is isomorphic to $\m$, resp. $\m^-$, as an $\mathfrak r$-module, but where the superscript $a$ means that $(\m)^a$, resp. $(\m^-)^a$, is an abelian ideal of $\tilde\p$, resp. of $\tilde\p^-$.
Such a semi-direct product is still a Lie algebra by \cite[Sect. 4] {Y1} for example, called an In\"on\" u-Wigner contraction, or a one-parameter contraction of $\p$, resp. of $\p^-$. 
The $\Bbbk$-vector space $\tilde\p$, resp. $\tilde\p^-$, is equal to $\p$, resp. $\p^-$, as a vector space and if we denote by $[\,,\,]_{\tilde\p}$, resp. $[\,,\,]_{\tilde\p^-}$ the Lie bracket in $\tilde\p$, resp. $\tilde\p^-$, and  by $[\,,\,]$ the Lie bracket in $\g$, then
one has that
\begin{equation}\forall z,\,z'\in\mathfrak r,\,\forall x,\,x'\in\m,\;\;[z,\,x]_{\tilde\p}=[z,\,x],\;\;[z,\, z']_{\tilde\p}=[z,\,z'],\;\;[x,\,x']_{\tilde\p}=0\label{brackettildep}\end{equation}
\begin{equation}\forall z,\,z'\in\mathfrak r,\,\forall y,\,y'\in\m^-,\;\;[z,\,y]_{\tilde\p^-}=[z,\,y],\;\;[z,\, z']_{\tilde\p^-}=[z,\,z'],\;\;[y,\,y']_{\tilde\p^-}=0.\label{brackettildep-}\end{equation}

\section{The degenerate highest weight module.}

In this section, we fix $\lambda\in P^+(\pi)$ and we will define, from the irreducible highest weight module $V(\lambda)$ of highest weight $\lambda$, some vector space denoted by $\widetilde{V}(\lambda)$ which can be endowed with a left $U(\tilde\p^-)$-module structure, so that it is isomorphic to $V(\lambda)$ as a left $U(\mathfrak r)$-module.

\subsection{The generalized PBW filtration and the degenerate highest weight module $\widetilde{V}(\lambda)$}\label{PBW}

 Consider $V(\lambda)$ the irreducible highest weight $\g$-module of highest weight $\lambda$ as defined in subsection \ref{genotation}.

Generalizing the PBW filtration on a highest weight irreducible $\g$-module introduced in \cite{FFL}, when $\p=\mathfrak b$ is a Borel subalgebra of $\g$ (that is, when $\pi'=\emptyset$), we define what we call {\it the generalized Poincar\'e-Birkhoff-Witt filtration} on $V(\lambda)$ as follows.

Choose $v_{\lambda}$  a nonzero weight vector in $V(\lambda)$ of highest weight $\lambda$ and $v_{w_0\lambda}$ a nonzero weight vector in $V(\lambda)$ of lowest weight $w_0\lambda$.
Since $\n^-=\n^-_{\pi'}\oplus\m^-$, the multiplication in the enveloping algebra gives, by the Poincar\'e-Birkhoff-Witt theorem \cite[2.1.11]{D}, an isomorphism of vector spaces $ U(\n^-_{\pi'})\otimes U(\m^-)\simeq U(\n^-)$. Then we have that $$V(\lambda)=U(\n^-_{\pi'}).(U(\m^-).v_{\lambda})=U(\mathfrak r).(U(\m^-).v_{\lambda})=U(\m^-).(U(\n^-_{\pi'}).v_{\lambda})$$
since $\m^-$ is an ideal of $\p^-$.
Set $V'(\lambda)=U(\n^-_{\pi'}).v_{\lambda}$. The latter is an irreducible $U(\mathfrak r)$-module.

Recall $(U_k(\m^-))_{k\in\mathbb N}$ the canonical filtration (also called standard degree filtration in \cite{FFL})   on the enveloping algebra $U(\m^-)$ of $\m^-$. More precisely $U_k(\m^-)$ is the vector subspace of $U(\m^-)$ generated by 
the products $y_1\cdots y_p$ where $y_i\in\m^-$ for all $i$, $1\le i\le p$, and $p\le k$.

For all $k\in\mathbb N$, let $\mathscr F_k(V(\lambda))$
be the vector subspace of $V(\lambda)$ generated by 
$$\begin{array}{cc}\{v\in V(\lambda)\mid\exists p\in\mathbb N,\,p\le k,\,\exists y_1,\,\ldots,\,y_p\in\m^-,\,\exists  u'\in U(\mathfrak r);\;\\
v= u'\,y_1\cdots y_p.v_{\lambda}\}.\end{array}$$
where $u'\,y_1\cdots y_p$ denotes an element in $U(\p^-)$. 
Observe that we also have that $\mathscr F_k(V(\lambda))$ is the vector subspace of $V(\lambda)$ generated by
$$\begin{array}{cc}\{v\in V(\lambda)\mid\exists p\in\mathbb N,\,p\le k,\,\exists y_1,\,\ldots,\,y_p\in\m^-,\,\exists  u'\in U(\mathfrak r);\;\\
v= y_1\cdots y_p\,u'.v_{\lambda}\}\end{array}$$ since $[\mathfrak r,\,\m^-]\subset\m^-$.

In other words, one has that $\mathscr F_0(V(\lambda))=U(\mathfrak r).v_{\lambda}=U(\n^-_{\pi'}).v_{\lambda}=V'(\lambda)$ and for all $k\in\mathbb N$, $\mathscr F_k(V(\lambda))=U_k(\m^-). V'(\lambda)$
 is a left $U(\mathfrak r)$-module.
Then $\mathscr F:=(\mathscr F_k(V(\lambda)))_{k\in\mathbb N}$ is an increasing and exhaustive filtration on $V(\lambda)$. We call it the generalized Poincar\'e-Birkhoff-Witt filtration on $V(\lambda)$ since when $\pi'=\emptyset$, it coincides with the PBW filtration on $V(\lambda)$ introduced in \cite{FFL}. The associated graded space is denoted by $$\widetilde{V}(\lambda):=gr_{\mathscr F}(V(\lambda))=\bigoplus_{k\in\mathbb N}\frac{\mathscr F_k(V(\lambda))}{\mathscr F_{k-1}(V(\lambda))}$$  where $\mathscr F_{-1}(V(\lambda)):=\{0\}$ and we call $\widetilde{V}(\lambda)$ the {\it degenerate highest weight module associated with $\lambda$}.
For all $v\in\mathscr F_k(V(\lambda))$, we denote by $gr_k(v)$ its canonical image in
$gr_k(V(\lambda)):=\displaystyle\frac{\mathscr F_k(V(\lambda))}{\mathscr F_{k-1}(V(\lambda))}$.
Denote by $\widetilde{V'}(\lambda)$ the canonical image of $V'(\lambda)$ in $\widetilde{V}(\lambda)$ that is, $\widetilde{V'}(\lambda)=gr_0(V'(\lambda))=gr_0(V(\lambda))\subset\widetilde{V}(\lambda)$.

\subsection{Left $U(\tilde\p^-)$-module structure on $\widetilde{V}(\lambda)$}\label{descr} Recall that, for all $k\in\mathbb N$, $\mathscr F_k(V(\lambda))$ is a finite-dimensional left $U(\mathfrak r)$-module and that the Lie algebra $\mathfrak r$ is reductive and  the elements of its centre act reductively in $\mathscr F_k(V(\lambda))$. Then  by \cite[1.6.4]{D} one has that $\mathscr F_k(V(\lambda))$ is a semisimple $U(\mathfrak r)$-module. Moreover $\mathscr F_{k-1}(V(\lambda))$ is a submodule of $\mathscr F_k(V(\lambda))$. Then there exists a left $U(\mathfrak r)$-submodule $\mathscr F^k(V(\lambda))$ of $\mathscr F_k(V(\lambda))$ such that $\mathscr F_k(V(\lambda))=\mathscr F^k(V(\lambda))\oplus\mathscr F_{k-1}(V(\lambda))$ and
we have that $$\mathscr F_k(V(\lambda))=\bigoplus_{i=0}^k\mathscr F^i(V(\lambda))$$
where $\mathscr F^0(V(\lambda))=\mathscr F_0(V(\lambda))$.
One deduces that $$V(\lambda)=\bigoplus_{k\in\mathbb N}\mathscr F^k(V(\lambda)).$$  It allows us to define, for all $k\in\mathbb N$, an isomorphism of vector spaces $$\beta_{\lambda}^k : gr_k(V(\lambda))\longrightarrow \mathscr F^k(V(\lambda))$$ such that, for all $v\in \mathscr F_k(V(\lambda))$, $v=\sum_{i=0}^kv_i$ with $v_i\in\mathscr F^i(V(\lambda))$, for all $0\le i\le k$, $$\beta_{\lambda}^k(gr_k(v))=v_k.$$ Then the direct sum $\beta_{\lambda}=\bigoplus_{k\in\mathbb N}\beta_{\lambda}^k$  is an isomorphism between the vector spaces $\widetilde{V}(\lambda)$ and $V(\lambda)$. 

Set, for all $y\in\m^-$, $z\in\mathfrak r$ and $v\in\mathscr F_k(V(\lambda))$, \begin{equation}y.gr_k(v)=gr_{k+1}(y.v)\label{actionm}\end{equation} and  \begin{equation}z.gr_k(v)=gr_k(z.v).\label{actionr}\end{equation}

We will see below that equations (\ref{actionm}) and (\ref{actionr}) extend to a  left $U(\tilde\p^-)$-module structure on $\widetilde{V}(\lambda)$ 
and that $\beta_{\lambda}$ is an isomorphism of $U(\mathfrak r)$-modules.

Set $\tilde\n^-=\n^-_{\pi'}\ltimes(\m^-)^a$ : it is a Lie subalgebra of $\tilde\p^-$. Set also ${\tilde v}_{\lambda}=gr_0(v_{\lambda})$.

Denote by $\theta:S(\p^-)\longrightarrow U(\p^-)$ the symmetrisation, as defined in \cite[2.4.6]{D}. More precisely for $k\in\mathbb N^*$, and for all $y_1,\ldots,\,y_k\in\p^-$, $$\theta(y_1\cdots y_k)=\frac{1}{k!}\sum_{\sigma\in\mathfrak S_k}y_{\sigma(1)}\cdots y_{\sigma(k)}$$ where $\mathfrak S_k$ is the set of permutations of $k$ elements, the product  in the left hand side lying in $S_k(\p^-)$ and the product   in the right hand side lying in $U_k(\p^-)$. 
 Endow the symmetric algebra $S(\p^-)$, resp. the enveloping algebra $U(\p^-)$, with the adjoint action of $U(\mathfrak r)$, denoted by $ad$, which extends uniquely by derivation the adjoint action of $\mathfrak r$ on $\p^-$ given by Lie bracket. 
  By \cite[2.4.10]{D} the map $\theta$ is an isomorphism of $ad\,U(\mathfrak r)$-modules. For all $k\in\mathbb N$, set $U^k(\m^-)=\theta(S_k(\m^-))$. Then $U^k(\m^-)$ is a left $ad\,U(\mathfrak r)$-submodule of $U_k(\m^-)$ and actually one has that $U_k(\m^-)=U^k(\m^-)\oplus U_{k-1}(\m^-)$ by \cite[2.4.4, 2.4.5]{D}. Denote by $pr_{U^k(\m^-)}$ the projection onto $U^k(\m^-)$ with respect to the above decomposition. 
We have the following. 

\begin{lm}
Let $\lambda\in P^+(\pi)$ and $k\in\mathbb N$.
\begin{enumerate}
\item[(i)] Equations (\ref{actionm}) and (\ref{actionr}) extend to a  left $U(\tilde\p^-)$-action on the vector space  $\widetilde{V}(\lambda)$ and  for this structure we have the following equalities :
\begin{equation}\widetilde{V'}(\lambda)=U(\mathfrak r).\tilde v_{\lambda}\label{V'}\end{equation}
\begin{equation}\widetilde{V}(\lambda)=U(\tilde\p^-).{\tilde v}_{\lambda}=U(\tilde\n^-).{\tilde v}_{\lambda}=S(\m^-).\widetilde{V'}(\lambda)=U(\tilde\p^-).\widetilde{V'}(\lambda).\label{V}\end{equation}
\item[(ii)]
For all $s\in S_k(\m^-)$, $u'\in U(\mathfrak r)$ and $u\in U_k(\m^-)$ one has :
\begin{equation}
 su'.\tilde v_{\lambda}=gr_k(\theta(s)u'.v_{\lambda})\label{first}\end{equation}
 \begin{equation}gr_k(uu'.v_{\lambda})=gr_k(pr_{U^k(\m^-)}(u)u'.v_{\lambda})\label{second}\end{equation} 
 \begin{equation}gr_k(V(\lambda))=S_k(\m^-).\widetilde{V'}(\lambda).\label{third}\end{equation}
 
\item[(iii)] The map $\beta_{\lambda}$ is an isomorphism of $U(\mathfrak r)$-modules between $\widetilde{V}(\lambda)$ and $V(\lambda)$. Then $\widetilde{V'}(\lambda)$ is a left irreducible $U(\mathfrak r)$-module and $\widetilde{V}(\lambda)$ has the same set of weights as $V(\lambda)$, especially $\lambda$ is the highest weight of $\widetilde{V}(\lambda)$  and $w_0\lambda$ is its lowest weight.

\item[(iv)] One may choose $\mathscr F^k(V(\lambda))$ to be included in $U^k(\m^-).V'(\lambda)$.
\end{enumerate}

\end{lm}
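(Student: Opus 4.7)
The plan is to verify parts (i)--(iv) in order, all four resting on careful tracking of how the standard-degree filtration on $U(\m^-)$ interacts with the various actions.

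For (i), I would first check that equations (\ref{actionm}) and (\ref{actionr}) are well-defined on the graded pieces. The $\mathfrak r$-action descends because $[\mathfrak r,\m^-]\subset\m^-$ makes each $\mathscr F_k(V(\lambda))=U_k(\m^-).V'(\lambda)$ a $U(\mathfrak r)$-submodule; the $\m^-$-action shifts degree by exactly one since $\m^-\cdot U_{k-1}(\m^-)\subset U_k(\m^-)$. To see that these assemble into a $U(\tilde\p^-)$-module structure I verify the relations (\ref{brackettildep-}): for $y,y'\in\m^-$ and $v\in\mathscr F_k(V(\lambda))$, $(yy'-y'y).v=[y,y']_{\g}.v$ lies in $\mathscr F_{k+1}(V(\lambda))$, hence is zero in $gr_{k+2}(V(\lambda))$, matching $[y,y']_{\tilde\p^-}=0$; the relation $[z,y]_{\tilde\p^-}=[z,y]_{\g}$ follows from $zy.v-yz.v=[z,y]_{\g}.v$ already in $V(\lambda)$, both sides shifting degree by one. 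Then (\ref{V'}) is $\widetilde{V'}(\lambda)=gr_0(U(\mathfrak r).v_\lambda)=U(\mathfrak r).\tilde v_\lambda$, and the chain (\ref{V}) follows from (\ref{third}) below combined with the PBW decomposition $U(\tilde\p^-)\cong S(\m^-)\otimes U(\mathfrak r)$, which is valid precisely because $(\m^-)^a$ is abelian.

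For (ii), formula (\ref{first}) is obtained by induction on $k$: iterating (\ref{actionm}) and (\ref{actionr}) yields $y_1\cdots y_k u'.\tilde v_\lambda=gr_k(y_1\cdots y_k u'.v_\lambda)$ where the last product is read in $U(\p^-)$; since any permutation of the $y_i$'s differs from the original product by an element of $U_{k-1}(\m^-)$, whose action on $u'.v_\lambda$ lies in $\mathscr F_{k-1}(V(\lambda))$, the same element of $gr_k$ equals $gr_k(\theta(s)u'.v_\lambda)$ with $s=y_1\cdots y_k\in S_k(\m^-)$. Formula (\ref{second}) is then immediate: $u-pr_{U^k(\m^-)}(u)\in U_{k-1}(\m^-)$ and $U_{k-1}(\m^-).V'(\lambda)\subset\mathscr F_{k-1}(V(\lambda))$. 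Combining (\ref{first}) and (\ref{second}) with $\mathscr F_k(V(\lambda))=U_k(\m^-).V'(\lambda)$ yields (\ref{third}).

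For (iii), since $\mathfrak r$ is reductive with centre acting reductively, each finite-dimensional $\mathscr F_k(V(\lambda))$ is a semisimple $U(\mathfrak r)$-module, so I choose each complement $\mathscr F^k(V(\lambda))$ to be $U(\mathfrak r)$-stable. Then for $v\in\mathscr F^k(V(\lambda))$ and $z\in\mathfrak r$ one has $z.v\in\mathscr F^k(V(\lambda))$, giving $\beta_\lambda^k(z.gr_k(v))=\beta_\lambda^k(gr_k(z.v))=z.v=z.\beta_\lambda^k(gr_k(v))$; hence $\beta_\lambda$ is a $U(\mathfrak r)$-isomorphism $\widetilde{V}(\lambda)\isomto V(\lambda)$, from which irreducibility of $\widetilde{V'}(\lambda)$ and the weight-space data (highest weight $\lambda$, lowest weight $w_0\lambda$) transfer automatically. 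For (iv), formula (\ref{third}) shows that the $U(\mathfrak r)$-submodule $U^k(\m^-).V'(\lambda)$ of $\mathscr F_k(V(\lambda))$ surjects onto $gr_k(V(\lambda))$; a $U(\mathfrak r)$-stable complement of $U^k(\m^-).V'(\lambda)\cap\mathscr F_{k-1}(V(\lambda))$ inside $U^k(\m^-).V'(\lambda)$ then provides an $\mathscr F^k(V(\lambda))$ contained in $U^k(\m^-).V'(\lambda)$.

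The main obstacle throughout is keeping the degree bookkeeping coherent: every step depends on the fact that $[\m^-,\m^-]\subset\m^-$ drops filtration degree by one relative to the product in $U(\m^-)$, which is precisely what forces $(\m^-)^a$ to act abelianly on the associated graded and so recovers the In\"on\"u--Wigner contraction on the module side. Once this one observation is isolated, formulas (\ref{first})--(\ref{third}) and the $U(\mathfrak r)$-equivariance of $\beta_\lambda$ slot into place without further surprises.
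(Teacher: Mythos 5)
Your proposal is correct and follows essentially the same route as the paper: the same commutator check $(yy'-y'y).v=[y,y'].v\in\mathscr F_{k+1}(V(\lambda))$ for the well-definedness of the $\tilde\p^-$-action, the same observation that $\theta(s)$ differs from $y_1\cdots y_k$ by an element of $U_{k-1}(\m^-)$ for equation (\ref{first}), the same use of semisimplicity of $\mathscr F_k(V(\lambda))$ as a $U(\mathfrak r)$-module for (iii), and the same complement construction inside $U^k(\m^-).V'(\lambda)$ for (iv). The only cosmetic difference is that you deduce (\ref{V}) from (\ref{third}) plus the PBW isomorphism $U(\tilde\p^-)\simeq S(\m^-)\otimes U(\mathfrak r)$, whereas the paper decomposes a general element of $\widetilde{V}(\lambda)$ directly; both are sound.
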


\begin{proof}

By \cite[2.1.1]{D} and  (\ref{brackettildep-}) of subsection \ref{defsemi-direct}, one may observe that the algebra $U(\tilde\p^-)$ is the quotient of the tensor algebra $T(\tilde\p^-)= T(\p^-)$ of the vector space $\tilde\p^-=\p^-$ by the two-sided ideal generated by the set
$$\{z\otimes z'-z'\otimes z-[z,\,z'],\;z\otimes y-y\otimes z-[z,\,y],\;y\otimes y'-y'\otimes y;\;z,\,z'\in\mathfrak r,\, y,\,y'\in\m^-\}$$
and that, by the Poincar\'e-Birkhoff-Witt theorem \cite[2.1.11]{D}, the multiplication is an isomorphism between the $\Bbbk$-vector spaces $U(\mathfrak r)\otimes S(\m^-)$  and $U(\tilde\p^-)$.

Fix $k\in\mathbb N$.
 For all $x\in\m^-\oplus\mathfrak r=\p^-$, denote by $x.\mathscr F_k(V(\lambda))$ the vector subspace of $V(\lambda)$ formed by all the vectors $x.v$, with $v\in\mathscr F_k(V(\lambda))$ (where $x.v$ denotes the action of $x$ on $v$ by the left $U(\g)$-module structure on $V(\lambda)$).
 
Then for all $y\in\m^-$, one has that $y.\mathscr F_k(V(\lambda))\subset\mathscr F_{k+1}(V(\lambda))$, and for all $z\in\mathfrak r$, one has that $z.\mathscr F_k(V(\lambda))\subset\mathscr F_{k}(V(\lambda))$. It follows that equation (\ref{actionm}) extends to a left action of $S(\m^-)$ on $\widetilde{V}(\lambda)$ 
 since moreover, for $y,\,y'\in\m^-$ and $v\in\mathscr F_k(V(\lambda))$, we have: $$y.(y'.(gr_k(v))-y'.(y.gr_k(v))=gr_{k+2}((yy'-y'y).v)=gr_{k+2}([y,\,y'].v)=0.$$
Similarly equation (\ref{actionr})
extends to a left action of $U(\mathfrak r)$ on $\widetilde{V}(\lambda)$ induced by the left action of $U(\mathfrak r)$ on $V(\lambda)$. 
Finally both equations (\ref{actionm}) and (\ref{actionr}) extend to  a left action of $U(\tilde\p^-)$ on $\widetilde{V}(\lambda)$ (by say, \cite[2.2.1, 2.2.2]{D}). Equation (\ref{V'}) follows since $\widetilde{V'}(\lambda)=gr_0(V'(\lambda))=gr_0(U(\mathfrak r).v_{\lambda})$.

 Let $\tilde v\in\widetilde{V}(\lambda)$. There exists $k\in\mathbb N$ and $v_i\in\mathscr F_i(V(\lambda))$, for $0\le i\le k$, such that $\tilde v=\sum_{i=0}^k gr_i(v_i)$ with, for all $i$, $v_i=\sum_{j=1}^{n_i}u'_{ij}u_{ij}.v_{\lambda}$ where $u'_{ij}\in U(\mathfrak r)$ and $u_{ij}\in U_i(\m^-)$. Then by equation (\ref{actionr}), one has $$gr_i(v_i)=\sum_{j=1}^{n_i}u'_{ij}.gr_i(u_{ij}.v_{\lambda})$$ and by equation (\ref{actionm}), $$gr_i(u_{ij}.v_{\lambda})\in S_i(\m^-).gr_0(v_{\lambda}).$$ 
 Actually we may take the $u'_{ij}$ in $U(\n^-_{\pi'})$, since $$V(\lambda)=U(\n^-).v_{\lambda}=U(\n^-_{\pi'}).(U(\m^-).v_{\lambda}).$$
 We then have
 $\widetilde{V}(\lambda)=U(\tilde\p^-).\tilde v_{\lambda}=U(\tilde\n^-).\tilde v_{\lambda}$. Since $\widetilde{V'}(\lambda)=U(\mathfrak r).\tilde v_{\lambda}$ and since the multiplication gives the isomorphism of  vector spaces $U(\tilde\p^-)\simeq S(\m^-)\otimes U(\mathfrak r)$, we also have that $\widetilde{V}(\lambda)=S(\m^-).\widetilde{V'}(\lambda)=U(\tilde\p^-).\widetilde{V'}(\lambda)$. Hence equation (\ref{V}).
 
 Let $k\in\mathbb N^*$ and set $s=y_1\cdots y_k\in S_k(\m^-)$ with $y_i\in\m^-$ for all $1\le i\le k$. Then $\theta(s)=y_1\cdots y_k+u\in U^k(\m^-)$ with $u\in U_{k-1}(\m^-)$ and $y_1\cdots y_k\in U_k(\m^-)$. 
 Then equations (\ref{actionm}) and (\ref{actionr}) and the fact that $U_{k-1}(\m^-)U(\mathfrak r).v_{\lambda}=\mathscr F_{k-1}(V(\lambda))$ give equation (\ref{first}). Equation (\ref{second}) is obvious by the decomposition $U_k(\m^-)=U^k(\m^-)\oplus U_{k-1}(\m^-)$. Both equations imply equation (\ref{third}). 
  
 By equation (\ref{actionr}), we have that $gr_k(V(\lambda))$ is an $U(\mathfrak r)$-module. Moreover if $\tilde v\in gr_k(V(\lambda))$ is such that $\tilde v=gr_k(v_k)$ with $v_k\in\mathscr F^k(V(\lambda))$, we have that $\beta_{\lambda}^k(\tilde v)=v_k$. Let $z\in\mathfrak r$. Then by equation (\ref{actionr}), one has that $z.\tilde v=gr_k(z.v_k)$ which implies that \begin{equation*}\beta_{\lambda}^k(z.\tilde v)=z.v_k=z.\beta_{\lambda}^k(\tilde v),\end{equation*} since $z.v_k\in\mathscr F^k(V(\lambda))$ because $\mathscr F^k(V(\lambda))$ is an $U(\mathfrak r)$-module. This shows $(iii)$.
 Finally to prove $(iv)$ it suffices to observe that $U^k(\m^-).V'(\lambda)$ is a finite dimensional $U(\mathfrak r)$-module. Set $W_k=U^k(\m^-).V'(\lambda)\cap\mathscr F_{k-1}(V(\lambda))$. Then $W_k$ is a left $U(\mathfrak r)$-submodule of $U^k(\m^-).V'(\lambda)$ and then there exists a left $U(\mathfrak r)$-submodule $W'_k$ such that $U^k(\m^-).V'(\lambda)=W_k\oplus W'_k$.
 Now $W'_k\cap\mathscr F_{k-1}(V(\lambda))=\{0\}$ and then one may choose the $U(\mathfrak r)$-module $\mathscr F^k(V(\lambda))$ to contain $W'_k$. But $\mathscr F_k(V(\lambda))=\mathscr F_{k-1}(V(\lambda))\oplus\mathscr F^k(V(\lambda))=U_k(\m^-).V'(\lambda)\subset U^k(\m^-).V'(\lambda)+U_{k-1}(\m^-).V'(\lambda)$ since $U_k(\m^-)=U ^k(\m^-)\oplus U_{k-1}(\m^-)$. It follows that $W'_k=\mathscr F^k(V(\lambda))$, which completes the proof.
 \end{proof}
 
 \subsection{Left $U(\p)$-module structure on $\widetilde{V}(\lambda)$.}\label{pmod}
 
 Recall the isomorphism $\beta_{\lambda}$ of $U(\mathfrak r)$-modules from $\widetilde{V}(\lambda)$ into $V(\lambda)$ (lemma \ref{descr} $(iii)$). For all $\tilde v\in\widetilde{V}(\lambda)$ and all $x\in\p$, one sets \begin{equation}\rho_{\lambda}(x)(\tilde v)=\beta_{\lambda}^{-1}(x.\beta_{\lambda}(\tilde v))\label{actionp}\end{equation}
 where $x.\beta_{\lambda}(\tilde v)$ stands for the left action of $x\in\g$ on $\beta_{\lambda}(\tilde v)\in V(\lambda)$.
 
 It is easily checked that $\rho_{\lambda}$ is a morphism of Lie algebras from $\p$ to $\mathfrak{gl}(\widetilde{V}(\lambda))$, hence that it extends to a left action of $U(\p)$ on $\widetilde{V}(\lambda)$ (again by \cite[2.2.1, 2.2.2]{D}). Moreover for all $x\in\mathfrak r$ and $\tilde v\in\widetilde{V}(\lambda)$, since $\beta_{\lambda}$ is a morphism of $U(\mathfrak r)$-modules, one has that $\rho_{\lambda}(x)(\tilde v)=x.\tilde v$ where the right hand side denotes the left action of $\mathfrak r$ on $\widetilde{V}(\lambda)$ defined in subsection \ref{descr}.
 
 \begin{Rq}\rm
 By \cite[2.7]{FJ2}, one has that $V'(\lambda)=\{v\in V(\lambda)\mid \m.v=0\}$. Hence \begin{equation}\widetilde{V'}(\lambda)=\{\tilde v\in\widetilde{V}(\lambda)\mid \rho_{\lambda}(\m)(\tilde v)=0\}\label{anulm}\end{equation}
 since $\widetilde{V'}(\lambda)=\beta_{\lambda}^{-1}(V'(\lambda))$.

 \end{Rq}
 
 \section{Action of a smash product  on $U(\tilde\p^-)$}\label{structuresup}
 
 In this section, we will define a smash product $A=T(\m)\string # U(\mathfrak r)$, containing the enveloping algebra $U(\mathfrak r)$ and the tensor algebra $T(\m)$, where the action of $U(\mathfrak r)$ on $T(\m)$ derives from the adjoint action of $\mathfrak r$ in $T(\m)$ which extends uniquely by derivation the adjoint action given by Lie bracket. This algebra $A$ is an associative algebra, which is actually a Hopf algebra.
 We will define what we call a generalized adjoint action (denoted by $ad^{**}$) of the algebra $A$ on the enveloping algebra $U(\tilde\p^-)$ and another left action of $A$ on $U(\tilde\p^-)$, where the latter is simply left multiplication when restricted to $U(\mathfrak r)$. The action $ad^{**}$ derives from the coadjoint action, denoted by $ad^*$, of $\tilde\p$ on $\p^-$ (note that, as vector spaces, one has ${\tilde\p}^*\simeq\p^-$). We will see in subsection \ref{dualS} why we need to take this coadjoint action $ad^*$.
  
  \subsection{A smash product}\label{smash}
 Recall that $\m$ denotes the nilpotent radical of $\p$ and that $T(\m)$ denotes the tensor algebra of $\m$. Since $[\mathfrak r,\,\m]\subset\m$, the algebra $T(\m)$ is an $U(\mathfrak r)$-algebra (in the sense of \cite[1.1.6]{J2}) with the adjoint action of $\mathfrak r$ on $T(\m)$ (denoted by $ad$) extending by derivation the adjoint action of $\mathfrak r$ on $\m$ given by the Lie bracket in $\g$. Then 
 we may consider the Hopf smash product $A=T(\m)\string # U(\mathfrak r)$ in the sense of \cite[1.1.8]{J2}. More precisely $A$  is equal as a vector space to the tensor product $T(\m)\otimes U(\mathfrak r)$, with multiplication given by $(s\otimes u)(s'\otimes u')=s\,ad\,u_1(s')\otimes u_2u'$ where $\Delta(u)=u_1\otimes u_2$ (Sweedler notation), $\Delta$ being the coproduct in $U(\mathfrak r)$, $s,\,s'\in T(\m)$ and $u,\, u'\in U(\mathfrak r)$.
 
 For example for all $z\in\mathfrak r$, $s,\,s'\in T(\m)$ and $u\in U(\mathfrak r)$, one has that $(s'\otimes z)(s\otimes u)=s'\,ad\,z(s)\otimes u+s's\otimes zu$. By setting $s\otimes 1=s$ and $1\otimes u=u$ we may view $T(\m)$ and $U(\mathfrak r)$ as subalgebras of $A$.  Then one has in $A$ that $s\otimes u=(s\otimes 1)(1\otimes u)=su$ and that 
 \begin{equation}\forall z\in\mathfrak r,\;\forall s\in T(\m),\;ad\,z(s)=zs-sz \label{equationA}\end{equation}
 and in particular
 \begin{equation}\forall z\in\mathfrak r,\;\forall x\in\m,\;[z,\,x]=zx-xz.\label{equationmA}\end{equation}
 
 Observe that $A$ is an associative  unitary  algebra (see \cite[1.1.8]{J2}) which is also a bialgebra thanks to the coproducts in $T(\m)$ and in $U(\mathfrak r)$. More precisely denoting also by $\Delta$ the coproduct in $T(\m)$, and by $\Delta_A$ the coproduct in $A$, we set for $s\in T(\m)$ and $u\in U(\mathfrak r)$, $\Delta_A(s\otimes u)=(s_1\otimes u_1)\otimes (s_2\otimes u_2)$ if $\Delta(s)=s_1\otimes s_2$ and $\Delta(u)=u_1\otimes u_2$ with Sweedler notation. We then have that $\Delta_A((s\otimes 1)(1\otimes u))=\Delta_A(s\otimes 1)\Delta_A(1\otimes u)$ and more generally for $s,\,s'\in T(\m)$ and $u,\,u'\in U(\mathfrak r)$, $\Delta_A((s\otimes u)(s'\otimes u'))=\Delta_A(s\otimes u)\Delta_A(s'\otimes u')$ by the cocommutativity of $\Delta$. Note that the coproduct $\Delta_A$ extends the coproduct $\Delta$ in $T(\m)$ and in $U(\mathfrak r)$.
 Actually the bialgebra $A$ is a Hopf algebra with the coidentity $\varepsilon$ given by $\varepsilon(x)=0$ for all $x\in\p$ and the antipode given by $a\in A\mapsto a^{\top}\in A$, where
 \begin{equation}a^{\top}=(-1)^rx_r\cdots x_1\in A\label{antipode}\end{equation}
 if $a=x_1\cdots x_r\in A$ (product in $A$) with $x_1,\,\ldots,\,x_r\in\p$  extended  by linearity to every element in $A$. One checks easily that the coidentity and the antipode (which coincide respectively with the coidentity and the antipode on $T(\m)$ and on $U(\mathfrak r)$, see for instance \cite[1.2.5]{J2}) are compatible with equation (\ref{equationmA}) which defines the smash product $A$.

  Roughly speaking, the Hopf algebra $A$ coincides with the enveloping algebra $U(\p)$ or even $U(\tilde\p)$, except that no relations are required for the associative product of elements in $\m$.

 \subsection{The coadjoint action of $\tilde\p$ on $\p^-$}\label{coadjointaction}
 Recall the opposite parabolic subalgebra $\p^-$ of $\p$. Thanks to the Killing form on $\g$, we have the isomorphism of vector spaces $\tilde\p^*\simeq \p^-$. As it was already mentioned in \cite[2]{P}, $\p^-$ is a $\tilde\p$-module, by the socalled coadjoint representation (denoted by $ad^*$) of $\tilde\p=\mathfrak r\ltimes(\m)^a$ in $\p^-$ defined as follows.
 
 \begin{equation}\forall x\in\mathfrak r,\;\forall \;y\in\p^-,\;ad^*x(y)=[x,\,y].\label{eg2}\end{equation}
 
\begin{equation}\forall x\in\m,\;\forall\;y\in\p^-,\;ad^*x(y)=pr_{\mathfrak r}([x,\,y])\label{eg}\end{equation} where $pr_{\mathfrak r}$ is the projection of $\g=\mathfrak r\oplus\m\oplus\m^-$ onto $\mathfrak r$.
 In particular \begin{equation}\forall x\in\m,\;\forall\;y\in\mathfrak r,\;ad^*x(y)=0.\label{eg1}\end{equation}

\begin{lm}
The map $ad^*:\tilde\p\longrightarrow\g\mathfrak l(\p^-)$ is a morphism between the Lie algebras $\tilde\p$ and $\g\mathfrak l(\p^-)$. In other words it gives a representation of $\tilde\p$ in $\p^-$, which extends uniquely  to a representation of $U(\tilde\p)$ in $\p^-$. This representation also extends uniquely by derivation to a representation of $U(\tilde\p)$ in the symmetric algebra $S(\p^-)$, which we still denote by $ad^*$.

\end{lm}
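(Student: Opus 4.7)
The plan is to verify case-by-case that $ad^*$ intertwines the bracket $[\,,\,]_{\tilde\p}$ with the commutator in $\g\mathfrak l(\p^-)$, following the three pairings of generators in (\ref{brackettildep}), and then to invoke the universal property of the enveloping algebra together with the standard Leibniz-rule extension to produce actions on $U(\tilde\p)$ and on $S(\p^-)$. The case $z,z'\in\mathfrak r$ is immediate: since $\p^-$ is stable under $ad\,\mathfrak r$, formula (\ref{eg2}) identifies $ad^*z|_{\p^-}$ with $ad\,z|_{\p^-}$, so the identity $ad^*([z,z']_{\tilde\p})=[ad^*z,ad^*z']$ on $\p^-$ is just the Jacobi identity in $\g$. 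The case $x,x'\in\m$ is also easy: the left-hand side vanishes because $[x,x']_{\tilde\p}=0$, and on the right each composition $ad^*x\circ ad^*x'$ applied to $y\in\p^-$ first produces $pr_{\mathfrak r}([x',y])\in\mathfrak r$, which is then annihilated by $ad^*x$ via (\ref{eg1}); so the commutator is zero.

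The main computation is the mixed case $z\in\mathfrak r$, $x\in\m$, where $[z,x]_{\tilde\p}=[z,x]\in\m$ since $\m$ is an $ad\,\mathfrak r$-submodule of $\g$. For every $y\in\p^-$ I must check
\[
pr_{\mathfrak r}\bigl([[z,x],y]\bigr)\;=\;\bigl[z,pr_{\mathfrak r}([x,y])\bigr]\;-\;pr_{\mathfrak r}\bigl([x,[z,y]]\bigr).
\]
Starting from the Jacobi identity $[[z,x],y]=[z,[x,y]]-[x,[z,y]]$ in $\g$ and applying $pr_{\mathfrak r}$, the identity follows from two observations: first, each summand in the decomposition $\g=\mathfrak r\oplus\m\oplus\m^-$ is $ad\,\mathfrak r$-stable, so $pr_{\mathfrak r}$ commutes with $ad\,z$ for every $z\in\mathfrak r$; second, $[z,y]\in\p^-$, so $ad^*x([z,y])$ is indeed given by formula (\ref{eg}). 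On the right-hand side I also use that $pr_{\mathfrak r}([x,y])\in\mathfrak r\subset\p^-$, so $ad^*z$ acts on it by $ad\,z$ via (\ref{eg2}).

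Once $ad^*:\tilde\p\to\g\mathfrak l(\p^-)$ is a Lie algebra morphism, the universal property of $U(\tilde\p)$ (as in \cite[2.2.1, 2.2.2]{D}) supplies a unique associative algebra homomorphism $U(\tilde\p)\to\mathrm{End}(\p^-)$ extending it. For the extension to $S(\p^-)$, for each $x\in\tilde\p$ I would define $ad^*x$ on $S(\p^-)$ as the unique derivation extending the given action on $\p^-$; the bracket relations of $\tilde\p$ are preserved by these derivations, since two derivations of $S(\p^-)$ that agree on the generating subspace $\p^-$ are equal, and the universal property then produces the claimed action of $U(\tilde\p)$ on $S(\p^-)$. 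I foresee no serious obstacle; the only subtle point is the commutation of $pr_{\mathfrak r}$ with $ad\,z$ in the mixed case, which is exactly what makes the projection appearing in (\ref{eg}) compatible with the bracket of $\tilde\p$.
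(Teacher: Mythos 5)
Your proposal is correct and follows essentially the same route as the paper: the same three-case verification of the bracket compatibility, with the mixed case resting on exactly the same key point (the $ad\,\mathfrak r$-stability of each summand of $\g=\mathfrak r\oplus\m\oplus\m^-$, hence the commutation of $pr_{\mathfrak r}$ with $ad\,z$, which is what the paper establishes in its equations $(\star\star)$--$(\star\star\star\,\star)$), followed by the Jacobi identity and the standard universal-property extensions to $U(\tilde\p)$ and $S(\p^-)$.
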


\begin{proof}
We give a proof of the lemma  for the reader's convenience.
It suffices  to prove that, for all $x,\,x'\in\tilde\p$, and for all $y\in\p^-$, we have \begin{equation}(ad^*x\circ ad^*x')(y)-(ad^*x'\circ ad^*x)(y)-ad^*[x,\,x']_{\tilde\p}(y)=0.\tag{$\star$}\label{eg3}\end{equation}

Assume that $x,\,x'\in\m$. Then $[x,\,x']_{\tilde\p}=0$ by equation (\ref{brackettildep}) in subsection \ref{defsemi-direct}. Moreover for all $y\in\p^-$, \begin{equation*}(ad^*x\circ ad^*x')(y)=ad^*x(pr_{\mathfrak r}([x',\,y]))=0\end{equation*} by (\ref{eg}) and (\ref{eg1}). Then equality (\ref{eg3}) follows in this case.

Assume that $x,\,x'\in\mathfrak r$. Then equality (\ref{eg3}) follows from equations (\ref{brackettildep}) and (\ref{eg2}).

It remains to prove equality (\ref{eg3}) for $x\in\m$ and $x'\in\mathfrak r$. By equations (\ref{brackettildep}), (\ref{eg2}) and (\ref{eg}) one has that, for all $y\in\p^-$,
\begin{align*}&(ad^*x\circ ad^*x')(y)-(ad^*x'\circ ad^*x)(y)-ad^*[x,\,x']_{\tilde\p}(y)\\
&=ad^*x([x',\,y])-ad^*x'(pr_{\mathfrak r}([x,\,y]))-pr_{\mathfrak r}([[x,\,x'],\,y])\\
&=pr_{\mathfrak r}([x,\,[x',\,y]])-[x',\,pr_{\mathfrak r}([x,\,y])]-pr_{\mathfrak r}([[x,\,x'],\,y]).
\end{align*}

Denote by $pr_{\m}$, resp. $pr_{\m^-}$, the projection of $\g=\mathfrak r\oplus\m\oplus\m^-$ onto $\m$, resp. onto $\m^-$.

 Then \begin{equation}[x',\,pr_{\mathfrak r}([x,\,y])]
 =\bigl[x',\,[x,\,y]-pr_{\m^-}([x,\,y])-pr_{\m}([x,\,y])\bigr]\tag{$\star\star$}\label{eg4}\end{equation}
 
 and we have
 \begin{equation}[x',\,pr_{\m^-}([x,\,y])]\in\m^-,\;[x',\,pr_{\m}([x,\,y])]\in\m,\;[x',\,pr_{\mathfrak r}([x,\,y])]\in\mathfrak r.\tag{$\star\star\star$}\label{eg5}\end{equation}
 
 Then by (\ref{eg4}) and (\ref{eg5}) we have that
 \begin{equation}[x',\,pr_{\mathfrak r}([x,\,y])]=pr_{\mathfrak r}([x',\,[x,\,y]]).\tag{$\star\star\star\,\star$}\label{eg6}\end{equation}
 
 It follows by (\ref{eg6}) that
\begin{align*} (ad^*x\circ ad^*x')(y)-(ad^*x'\circ ad^*x)(y)-ad^*[x,\,x']_{\tilde\p}(y)\\
=pr_{\mathfrak r}\bigl([x,\,[x',\,y]]-[x',\,[x,\,y]]-[[x,\,x'],\,y]\bigr)=0\end{align*}
by Jacobi identity in $\g$. \par\noindent
 Applying \cite[2.2.1]{D} and \cite[1.2.14]{D} completes the proof of the lemma.
 \end{proof}

 \subsection{Partial symmetrisation}\label{partialsym}

 Recall the symmetrisation $\theta:S(\p^-)\longrightarrow U(\p^-)$ which is an isomorphism of $ad\,U(\mathfrak r)$-modules, when $S(\p^-)$ and $U(\p^-)$ are endowed with the adjoint action $ad$ (see \ref{descr}). Denote also by $ad$ the adjoint action of $U(\mathfrak r)$ on $U(\tilde\p^-)$, extending by derivation  the Lie bracket of $\mathfrak r$ on $\tilde\p^-$ (see equation (\ref{brackettildep-})).
 
 Set $$\tilde\theta=Id_{S(\m^-)}\otimes\theta_{\mid S(\mathfrak r)}:S(\p^-)\simeq S(\m^-)\otimes S(\mathfrak r)\longrightarrow U(\tilde\p^-)\simeq S(\m^-)\otimes U(\mathfrak r),$$
 
 that is, \begin{equation}\forall s\in S(\m^-),\,\,\forall s'\in S(\mathfrak r),\;\;\tilde\theta(ss')=s\,\theta(s').\label{th}\end{equation}

We call the map $\tilde\theta$ a {\it partial symmetrisation}. Observe that $\tilde\theta$ does not coincide with the symmetrisation $\tilde{\tilde\theta}$ of $S(\tilde\p^-)=S(\p^-)$ in $U(\tilde\p^-)$. For instance, for $y\in\m^-$ and $z\in\mathfrak r$, one has that $\tilde\theta(yz)=yz$, while $\tilde{\tilde\theta}(yz)=\frac{1}{2}(yz+zy)=yz+\frac{1}{2}[z,\,y]$.

 \begin{lm}
 
 The map $\tilde\theta$ is an isomorphism of $ad\,U(\mathfrak r)$-modules, when $S(\p^-)$ and $U(\tilde\p^-)$  are endowed with the adjoint action.
 
 \end{lm}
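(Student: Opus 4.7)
The plan is to verify bijectivity and $ad\,U(\mathfrak r)$-equivariance separately, with the latter being the only substantive point.

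First, I would observe that $\tilde\theta$ is a vector space isomorphism. Indeed, via the PBW identifications $S(\p^-)\simeq S(\m^-)\otimes S(\mathfrak r)$ and (since $\m^-$ is an abelian ideal of $\tilde\p^-$) $U(\tilde\p^-)\simeq S(\m^-)\otimes U(\mathfrak r)$, the map $\tilde\theta=\mathrm{Id}_{S(\m^-)}\otimes\theta_{|S(\mathfrak r)}$ is the tensor product of the identity with the symmetrisation $\theta:S(\mathfrak r)\to U(\mathfrak r)$, which is itself a linear isomorphism by \cite[2.4.6]{D}. So bijectivity is immediate.

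Next, to check equivariance, by \cite[2.2.1]{D} it suffices to test with $z\in\mathfrak r$ rather than arbitrary $u\in U(\mathfrak r)$. Fix $z\in\mathfrak r$ and write an arbitrary element of $S(\p^-)$ as a sum of products $ss'$ with $s\in S(\m^-)$ and $s'\in S(\mathfrak r)$. Because $ad\,z$ acts by derivation on $S(\p^-)$ and stabilises both factors (since $[\mathfrak r,\m^-]\subset\m^-$ and $[\mathfrak r,\mathfrak r]\subset\mathfrak r$), I get
\begin{equation*}
\tilde\theta\bigl(ad\,z(ss')\bigr)=\tilde\theta\bigl(ad\,z(s)\cdot s'\bigr)+\tilde\theta\bigl(s\cdot ad\,z(s')\bigr)=ad\,z(s)\,\theta(s')+s\,\theta\bigl(ad\,z(s')\bigr).
\end{equation*}
Since $\theta_{|S(\mathfrak r)}$ is already known to intertwine the adjoint action, the second summand equals $s\cdot ad\,z(\theta(s'))$ in $U(\mathfrak r)\subset U(\tilde\p^-)$. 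So it remains to check that the right-hand side equals $ad\,z(s\,\theta(s'))$ computed in $U(\tilde\p^-)$.

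The main (minor) obstacle is therefore to verify that the adjoint action of $z$ on $U(\tilde\p^-)$ really does act as a derivation whose restriction to $S(\m^-)$ matches the action on the $S(\m^-)$-factor of $S(\p^-)$. This relies crucially on the bracket relations (\ref{brackettildep-}): for $y_1,\ldots,y_k\in\m^-$ viewed as a commuting family in $U(\tilde\p^-)$, one has
\begin{equation*}
ad\,z(y_1\cdots y_k)=\sum_{i=1}^{k}y_1\cdots[z,y_i]_{\tilde\p^-}\cdots y_k=\sum_{i=1}^{k}y_1\cdots[z,y_i]\cdots y_k,
\end{equation*}
which is the same expression as in $S(\m^-)\subset S(\p^-)$ since $[z,y_i]\in\m^-$ again commutes with the other $y_j$'s in $U(\tilde\p^-)$. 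Combined with the derivation property of $ad\,z$ on $U(\tilde\p^-)$ applied to the product $s\cdot\theta(s')$, this gives $ad\,z(s\,\theta(s'))=ad\,z(s)\,\theta(s')+s\,ad\,z(\theta(s'))$, which matches the displayed formula above. Hence $\tilde\theta\circ ad\,z=ad\,z\circ\tilde\theta$ on $S(\p^-)$, and the lemma follows.
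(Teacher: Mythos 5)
Your proof is correct and follows essentially the same route as the paper: bijectivity from $\tilde\theta=\mathrm{Id}_{S(\m^-)}\otimes\theta_{|S(\mathfrak r)}$ being a tensor product of isomorphisms, and equivariance by applying the derivation property of $ad\,z$ on both sides together with the fact that $\theta_{|S(\mathfrak r)}$ intertwines the adjoint action. Your extra verification that $ad\,z$ acts the same way on $S(\m^-)$ whether viewed inside $S(\p^-)$ or inside $U(\tilde\p^-)$ (using that $\m^-$ is abelian in $\tilde\p^-$) makes explicit a point the paper passes over with a one-line remark, but it is the same argument.
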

 
 \begin{proof}
 Since $Id_{S(\m^-)}$ and $\theta_{\mid S(\mathfrak r)}:S(\mathfrak r)\longrightarrow U(\mathfrak r)$ are isomorphisms,
 it follows that $\tilde\theta$ is an isomorphism too.
 
 Let $z\in\mathfrak r$, $s\in S(\m^-)$ and $s'\in S(\mathfrak r)$. Observe that one has that $ad\,z(s)\in S(\m^-)$, and this element may be viewed equally as an element in $S(\p^-)$ or in $U(\tilde\p^-)$. Moreover $ad\,z(s')\in S(\mathfrak r)$ and $ad\,z(\theta(s'))\in U(\mathfrak r)$.
 Since $ad\,z$ is a derivation and since $\theta$ is a morphism of $ad\,U(\mathfrak r)$-modules we have, by equation (\ref{th}):
 \begin{align*}
\tilde\theta (ad\,z(ss'))&={\tilde\theta}(ad\,z(s)s'+s\,ad\,z(s'))\\
  &=ad\,z(s)\theta(s')+s\,\theta(ad\,z(s'))\\
&=ad\,z(s)\theta(s')+s\,ad\,z(\theta(s'))\\
 &=ad\,z\,(s\,\theta(s'))\\
 &=ad\,z(\tilde\theta(ss'))
 \end{align*}
This proves the lemma. \end{proof}

\subsection{Generalized adjoint action of $A$ on $U(\tilde\p^-)$}\label{adjaction}

Recall the isomorphism of vector spaces $U(\tilde\p^-)\simeq S(\m^-)\otimes U(\mathfrak r)$. Then one has that
\begin{equation}U(\tilde\p^-)\simeq\bigoplus_{k\in\mathbb N}S_k(\m^-)\otimes U(\mathfrak r)\label{directsum}\end{equation}
and for all $k\in\mathbb N$, \begin{equation}U_k(\tilde\p^-)\simeq \bigoplus_{0\le j\le k}S_j(\m^-)\otimes U_{k-j}(\mathfrak r)\label{directsumbis}\end{equation}
as vector spaces.
Recall also the coadjoint representation of $\tilde\p$ in the symmetric algebra $S(\p^-)$, which we have denoted by $ad^*$ (subsection \ref{coadjointaction}). Fix $k$ and $j$ in $\mathbb N$ and set $S_{-1}(\m^-)=\{0\}$.
One has that 
\begin{equation}\forall x\in\m,\;\forall s\in S_k(\m^-),\;ad^*x(s)\in S_{k-1}(\m^-)\mathfrak r\subset S_k(\p^-)\label{ad*}\end{equation} by equation (\ref{eg}).
Then one has that 
\begin{multline}\forall x\in\m,\;\forall s\in S_k(\m^-),\;\forall u'\in U_j(\mathfrak r),\\
\tilde\theta(ad^*x(s))u'\in S_{k-1}(\m^-)U_{j+1}(\mathfrak r)\subset U_{k+j}(\tilde\p^-)\label{tildethetaad*}\end{multline} and  that 
\begin{equation}\forall z\in\mathfrak r,\;\forall s\in S_k(\m^-),\;\forall u'\in U(\mathfrak r),\\
ad\,z(su')\in S_k(\m^-)U(\mathfrak r)\subset U(\tilde\p^-).\end{equation}

We set

\begin{equation}\forall x\in\m,\,\forall s\in S_k(\m^-),\,\forall u'\in U(\mathfrak r),\;\;ad^{**}x(su')=\tilde\theta(ad^*x(s))u'\in U(\tilde\p^-)\label{actionA}\end{equation}

and 
\begin{equation}\forall z\in\mathfrak r,\,\forall s\in S_k(\m^-),\,\forall u'\in U(\mathfrak r),\;\;ad^{**}z(su')=ad\,z(su')\in U(\tilde\p^-).\label{actionAr}\end{equation}

Observe that \begin{equation}\forall x\in\m,\,\forall u'\in U(\mathfrak r),\,ad^{**}x(u')=0\label{actionnul}\end{equation} and \begin{equation}\forall x\in\m,\,\forall s\in S_k(\m^-),\,\forall u'\in U(\mathfrak r),\;ad^{**}x(su')=ad^{**}x(s)u'.\label{actionpart}\end{equation}
\begin{lm}

 Equations (\ref{actionA}) and (\ref{actionAr})  extend to a left action of $A$ on the enveloping algebra $U(\tilde\p^-)$. We call this action {\it the generalized adjoint action} of $A$ on $U(\tilde\p^-)$.

\end{lm}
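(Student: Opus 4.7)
My approach uses the universal properties of $T(\m)$, $U(\mathfrak r)$, and the smash-product presentation of $A$. Since $T(\m)$ is the free associative algebra on $\m$, the linear map $x \mapsto ad^{**}x$ defined by (\ref{actionA}) extends uniquely to an algebra homomorphism $T(\m) \to \mathrm{End}_{\Bbbk}(U(\tilde\p^-))$. Since $ad$ is a Lie algebra representation of $\mathfrak r$ on $U(\tilde\p^-)$ (it is the derivation extending the Lie bracket, via (\ref{brackettildep-})), it extends uniquely to an algebra homomorphism $U(\mathfrak r) \to \mathrm{End}_{\Bbbk}(U(\tilde\p^-))$, which is exactly $z \mapsto ad^{**}z$ by (\ref{actionAr}). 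By the presentation of $A$ in terms of generators in $\mathfrak r$ and $\m$ subject to relation (\ref{equationmA}), these two homomorphisms amalgamate into an algebra map $A \to \mathrm{End}_{\Bbbk}(U(\tilde\p^-))$ if and only if the compatibility
$$ad^{**}z \circ ad^{**}x - ad^{**}x \circ ad^{**}z = ad^{**}[z,x]$$
holds on $U(\tilde\p^-)$ for all $z \in \mathfrak r$ and $x \in \m$.

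The verification of this identity is the only real computation. Take $f = su' \in U(\tilde\p^-)$ with $s \in S(\m^-)$ and $u' \in U(\mathfrak r)$. Using that $ad\,z$ is a derivation of $U(\tilde\p^-)$ and that $\tilde\theta$ is $ad\,U(\mathfrak r)$-equivariant by the lemma of subsection \ref{partialsym}, I compute
\begin{align*}
ad^{**}z(ad^{**}x(su')) &= ad\,z(\tilde\theta(ad^*x(s)))\,u' + \tilde\theta(ad^*x(s))\,ad\,z(u') \\
&= \tilde\theta(ad\,z(ad^*x(s)))\,u' + \tilde\theta(ad^*x(s))\,ad\,z(u').
\end{align*}
On the other hand, expanding $ad\,z(su') = ad\,z(s)u' + s\,ad\,z(u')$ with $ad\,z(s) \in S(\m^-)$ and applying (\ref{actionA})--(\ref{actionpart}),
$$ad^{**}x(ad^{**}z(su')) = \tilde\theta(ad^*x(ad\,z(s)))\,u' + \tilde\theta(ad^*x(s))\,ad\,z(u').$$
Subtracting, the $ad\,z(u')$-terms cancel, leaving $\tilde\theta\bigl(ad\,z(ad^*x(s)) - ad^*x(ad\,z(s))\bigr)\,u'$. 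On $S(\p^-)$ one has $ad\,z = ad^*z$ for $z \in \mathfrak r$ by (\ref{eg2}), and $ad^*$ is a representation of $\tilde\p$ by the lemma of subsection \ref{coadjointaction}, so the bracket collapses to $[ad^*z, ad^*x](s) = ad^*[z,x](s)$. Since $[z,x] \in \m$, this is precisely $ad^{**}[z,x](su') = \tilde\theta(ad^*[z,x](s))\,u'$ by definition, proving the compatibility.

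I expect the main obstacle to be purely notational: one must track carefully in which object each element lives ($S(\p^-)$ versus $U(\tilde\p^-)$, $\m$ versus $\m^-$, etc.) and invoke the two preceding lemmas (equivariance of $\tilde\theta$ and representability of $ad^*$) at the correct moments. Once these are deployed, the identity reduces to the Jacobi identity in $\g$ already encoded in the definition of the coadjoint action.
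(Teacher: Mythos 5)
Your proposal is correct and follows essentially the same route as the paper: extend $ad^{**}$ to $T(\m)$ by freeness and to $U(\mathfrak r)$ as the adjoint representation, then reduce the smash-product compatibility to the single identity $ad^{**}z\circ ad^{**}x-ad^{**}x\circ ad^{**}z=ad^{**}[z,x]$, which you verify by the same computation the paper uses (derivation property of $ad\,z$, the $ad\,U(\mathfrak r)$-equivariance of $\tilde\theta$ from lemma \ref{partialsym}, and the representation property of $ad^*$ from lemma \ref{coadjointaction}). The only point the paper spells out that you pass over silently is the well-definedness of $ad^{**}x$ on $U(\tilde\p^-)$ via the PBW decomposition $U(\tilde\p^-)\simeq S(\m^-)\otimes U(\mathfrak r)$, but this is immediate from that vector-space isomorphism.
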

\begin{proof}

Since equation  (\ref{actionAr}) is just the adjoint action, it extends to a left action of $U(\mathfrak r)$ on $U(\tilde\p^-)$ by \cite[2.2.1,\;2.4.9]{D}.

Now consider $x\in\m$. One can extend equation (\ref{actionA})  by linearity so that $ad^{**}x\in End(U(\tilde\p^-))$.  Let us explain why  this is well defined. 

Note first that, for $y\in\m^-$ and $z\in\mathfrak r$, one sets \begin{equation*}ad^{**}x(zy)=ad^{**}x(yz)+ad^{**}x([z,\,y]).\end{equation*}

More generally by equation (\ref{directsum}) every element in $U(\tilde\p^-)$ may be written in the form $\sum_{i\in I} s_i u'_i$ with $I$ a finite set and for all $i\in I$, $s_i\in S(\m^-)$ and $u'_i\in U(\mathfrak r)$, with the $s_i$, $i\in I$, linearly independent. Then if such an element is zero, we have that $u'_i=0$ for all $i\in I$,
 and then $ad^{**}x(\sum_{i\in I}s_iu'_i)=\sum_{i\in I}\tilde\theta(ad^*x(s_i))u'_i=0$. 
 
Moreover for $s,\,s'\in S(\m^-)$, one has \begin{equation*}ad^{**}x(ss')=\tilde\theta(ad^*x(ss'))=\tilde\theta(ad^*x(s's))=ad^{**}x(s's)\end{equation*}
since $ad^{*}x$ is an endomorphism of $S(\p^-)$. Then $ad^{**}x$ is well defined on $U(\tilde\p^-)$.

Finally $ad^{**}$ is a linear map from $\m$ to $End(U(\tilde\p^-))$, which  extends naturally to a $\Bbbk$-algebras morphism from $T(\m)$ to $End(U(\tilde\p^-))$. We then obtain  left $U(\mathfrak r)$ and $T(\m)$-module structures on $U(\tilde\p^-)$.

It remains to check that both structures imply a left $A$-module structure on $U(\tilde\p^-)$, that is, are compatible with equation (\ref{equationA}), which defines the smash product $A$. For this it suffices to prove that 
\begin{equation}\forall x\in\m,\;\forall z\in\mathfrak r,\;ad^{**}z\circ ad^{**}x-ad^{**}x\circ ad^{**}z=ad^{**}[z,\,x].\label{comp}\end{equation}

Let $x\in\m$, $z\in\mathfrak r$, $s\in S(\m^-)$ and $u'\in U(\mathfrak r)$.
Since $ad\,z$ is a derivation in $U(\tilde\p^-)$, one has that

\begin{align*}
ad^{**}(zx-xz)(su')&=(ad^{**}z\circ ad^{**}x-ad^{**}x\circ ad^{**}z)(su')\\
&=ad\,z(\tilde\theta(ad^*x(s))u')\\
&-ad^{**}x(ad\,z(s)u'+s\,ad\,z(u'))\\
&=ad\,z(\tilde\theta(ad^*x(s)))u'+\tilde\theta(ad^*x(s))ad\,z(u')\\
&-\tilde\theta(ad^*x(ad\,z(s)))u'
-\tilde\theta(ad^*x(s))ad\,z(u')
\end{align*}
since moreover $ad\,z(s)\in S(\m^-)$ and $ad\,z(u')\in U(\mathfrak r)$.
Then 
\begin{equation*}
ad^{**}(zx-xz)(su')=\tilde\theta((ad\,z\circ ad^*x)(s))u'-\tilde\theta(ad^*x(ad\,z(s)))u'
\end{equation*} by lemma \ref{partialsym}
and then 
\begin{equation*}
ad^{**}(zx-xz)(su')=\tilde\theta((ad^*z\circ ad^*x)(s))u'-\tilde\theta((ad^*x\circ ad^*z)(s))u'
\end{equation*}
since, for all $t\in S(\p^-)$, one has that $ad\,z(t)=ad^*z(t)$ by equation (\ref{eg2}).

Since \begin{equation*}ad^*z\circ ad^*x-ad^*x\circ ad^*z=ad^*[z,\,x]\end{equation*} in $End(S(\p^-))$ by the proof of Lemma \ref{coadjointaction}, the required equation (\ref{comp}) follows.
\end{proof}

\begin{Rq}
We will see in subsection \ref{lractionsA} why we call this left action $ad^{**}$ of $A$ on $U(\tilde\p^-)$ a generalized adjoint action.

\end{Rq}
\subsection{Left and right actions of $A$ on $U(\tilde\p^-)$}\label{lractionsA}
Here we will define a right, resp. a left action, of $A$ on $U(\tilde\p^-)$ as follows.
\begin{equation}\forall u'\in U(\mathfrak r),\;\forall u\in U(\tilde\p^-),\;R(u')(u)=uu'\label{ractionr}\end{equation} 
(product in $U(\tilde\p^-)$).
Then $R_{\mid U(\mathfrak r)}$ is a right action of $U(\mathfrak r)$ on $U(\tilde\p^-)$ called the regular right action (see \cite[2.2.21]{D}).
We  extend this right action  by setting 
\begin{equation}\forall x\in\m,\;R(x)=0.\label{ractionm}\end{equation}
 One checks immediately that the map $R$ induces a right action of $A$ on $U(\tilde\p^-)$ (still denoted by $R$). It follows that the map $a\in A\mapsto R(a^{\top})$ is a left action of $A$ on $U(\tilde\p^-)$.

One also sets:
\begin{equation}\forall u'\in U(\mathfrak r),\;\forall u\in U(\tilde\p^-),\;L(u')(u)=u'u\label{lactionr}\end{equation} 
(product in $U(\tilde\p^-)$).

Then $L_{\mid U(\mathfrak r)}$ is a left action of $U(\mathfrak r)$ on $U(\tilde\p^-)$ called the regular left action (see \cite[2.2.21]{D}). We extend this left action  by setting
\begin{equation}\forall x\in\m,\;L(x)=ad^{**}x\label{lactionm}\end{equation}  (see equation (\ref{actionA})), which extends by the proof of lemma \ref{adjaction} to a left action of $T(\m)$ on $U(\tilde\p^-)$.

\begin{lm}
The map $L$ extends to a left action of $A$ on $U(\tilde\p^-)$ (still denoted by $L$). Note that this is {\it not} in general a left action of $U(\p)$ nor of $U(\tilde\p)$ on $U(\tilde\p^-)$.
\end{lm}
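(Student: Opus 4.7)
The plan is to exploit the structure of the smash product $A=T(\m)\string # U(\mathfrak r)$ recalled in subsection \ref{smash}: giving a left $A$-module structure on $U(\tilde\p^-)$ amounts to producing compatible left $U(\mathfrak r)$- and left $T(\m)$-module structures satisfying the defining relation (\ref{equationA}), i.e.\ $L(z)L(s) - L(s)L(z) = L(ad\,z(s))$ for all $z\in\mathfrak r$ and $s\in T(\m)$. Since both sides are linear in $s$ and multiplicative on products in $T(\m)$, it suffices to verify the relation on $s=x\in\m$, that is, to establish
$$L(z)\circ L(x) - L(x)\circ L(z) = L([z,x]) \qquad \text{in } End(U(\tilde\p^-)), \;\; z\in\mathfrak r,\; x\in\m.$$

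The $U(\mathfrak r)$-module part is immediate, as $L_{|U(\mathfrak r)}$ is the regular left action (\ref{lactionr}); and the $T(\m)$-module part was already established inside the proof of lemma \ref{adjaction}, where $x\mapsto ad^{**}x$ was extended by the universal property of the tensor algebra to a $\Bbbk$-algebra morphism $T(\m)\longrightarrow End(U(\tilde\p^-))$. The compatibility is the heart of the argument. Fix $z\in\mathfrak r$, $x\in\m$ and evaluate on $su'\in S(\m^-)U(\mathfrak r)\simeq U(\tilde\p^-)$. Rewriting $zs = sz + ad\,z(s)$ in $U(\tilde\p^-)$, with $ad\,z(s)\in S(\m^-)$, formula (\ref{actionA}) yields
$$L(x)(zsu') = \tilde\theta(ad^*x(s))\,zu' + \tilde\theta\bigl(ad^*x(ad\,z(s))\bigr)u',$$
whence $[L(z),L(x)](su') = ad\,z\bigl(\tilde\theta(ad^*x(s))\bigr)u' - \tilde\theta\bigl(ad^*x(ad\,z(s))\bigr)u'$. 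Pulling $ad\,z$ past $\tilde\theta$ via lemma \ref{partialsym}, and using (\ref{eg2}) to identify $ad\,z$ with $ad^*z$ on $S(\p^-)$, this becomes $\tilde\theta\bigl((ad^*z\circ ad^*x-ad^*x\circ ad^*z)(s)\bigr)u' = \tilde\theta\bigl(ad^*[z,x](s)\bigr)u'$ by the Lie-representation property of $ad^*$ proved in the lemma of subsection \ref{coadjointaction}. Since $[z,x]\in\m$, a final application of (\ref{actionA}) identifies this with $ad^{**}[z,x](su') = L([z,x])(su')$, as required.

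For the concluding remark, neither $U(\p)$ nor $U(\tilde\p)$ can act through $L$ because $L_{|\m}$ fails to respect the bracket of two elements of $\m$. Indeed, for $x,y\in\m$ and $s_1,s_2\in\m^-$, a direct calculation using (\ref{actionA}) and (\ref{actionpart}) gives
$$[L(x),L(y)](s_1 s_2) = [pr_{\mathfrak r}([x,s_1]),\,pr_{\mathfrak r}([y,s_2])] + [pr_{\mathfrak r}([x,s_2]),\,pr_{\mathfrak r}([y,s_1])] \in \mathfrak r,$$
which is generically nonzero, so $L$ cannot define a $U(\tilde\p)$-action (where $[x,y]_{\tilde\p}=0$ would force $[L(x),L(y)]=0$); on the other hand $L([x,y])(s_1 s_2)\in S_1(\m^-)\mathfrak r$ lies in a different PBW-component of $U(\tilde\p^-)$, hence cannot equal the above, so $L$ cannot define a $U(\p)$-action either.

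The main (and essentially only) obstacle is the compatibility computation; once one combines lemma \ref{partialsym} with the identification $ad\,z = ad^*z$ on $S(\p^-)$, the rest is formal bookkeeping via the PBW decomposition $U(\tilde\p^-)\simeq S(\m^-)\otimes U(\mathfrak r)$.
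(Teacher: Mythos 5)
Your proof is correct and follows essentially the same route as the paper's: both reduce the verification to the single smash-product relation $L(z)\circ L(x)-L(x)\circ L(z)=L([z,\,x])$ evaluated on elements $su'$ of $U(\tilde\p^-)\simeq S(\m^-)\otimes U(\mathfrak r)$, the only difference being that the paper quotes the previously established identity $ad^{**}z\circ ad^{**}x-ad^{**}x\circ ad^{**}z=ad^{**}[z,\,x]$ from the proof of lemma \ref{adjaction}, whereas you re-derive it inline via lemma \ref{partialsym} and the fact that $ad^*$ is a Lie algebra morphism. Your explicit computation of $[L(x),\,L(y)]$ on $s_1s_2$ substantiating the closing remark (which the paper states without proof) is a correct bonus.
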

\begin{proof}
We have to check that the map $L$ preserves equation (\ref{equationA}) which defines the smash product $A$ and for this it suffices to check that $L$ preserves equation (\ref{equationmA}). In other words we have to check that \begin{equation}\forall z\in\mathfrak r,\;\forall x\in\m,\;L([z,\,x])=L(z)\circ L(x)-L(x)\circ L(z).\label{comp2}\end{equation} 
Let $x\in\m$, $z\in\mathfrak r$, $s\in S_k(\m^-)$ and $u'\in U(\mathfrak r)$.
One has that
\begin{align*}
(L(z)\circ L(x)-L(x)\circ L(z)-L([z,\,x]))(su')\\
=z\,ad^{**}x(su')-ad^{**}x(zsu')-ad^{**}[z,\,x](su')\\
=z\,ad^{**}x(su')-(ad^{**}x\circ ad^{**}z)(su')-ad^{**}x(su'z)\\
-ad^{**}[z,\,x](su')
\end{align*}

 since $ad^{**}z(su')=ad\,z(su')=zsu'-su'z$ in $U(\tilde\p^-)$.
 
 Recall that $ad^{**}x(su'z)=\tilde\theta(ad^*x(s))u'z=ad^{**}x(su')z$. Then
 \begin{align*}
(L(z)\circ L(x)-L(x)\circ L(z)-L([z,\,x]))(su')\\
=z\,ad^{**}x(su')-(ad^{**}x\circ ad^{**}z)(su')-ad^{**}x(su')z\\
-ad^{**}[z,\,x](su')\\
=(ad^{**}z\circ ad^{**}x)(su')-(ad^{**}x\circ ad^{**}z)(su')-ad^{**}[z,\,x](su')
\end{align*}
since in $U(\tilde\p^-)$ we have $(ad^{**}z\circ ad^{**}x)(su')=z\,ad^{**}x(su')-ad^{**}x(su')\,z$.
Now equation (\ref{comp}) gives equation (\ref{comp2}).
\end{proof}

Recall (see \cite[1.3.1]{J2} for instance) that adjoint action in a Hopf algebra $A$ may be expanded by using  the right action $R$ and the left action $L$ of $A$  as in the following proposition.
Hence we may view $ad^{**}$ as a generalized adjoint action (here the Hopf algebra $A$ does not act on itself but on $U(\tilde\p^-)$).

  \begin{prop}
  One has the following.
   \begin{equation}\forall a\in A,\;\;ad^{**}a=L(a_1)\circ R(a_2^{\top})\label{adjointaction}\end{equation}
   where $\Delta_A(a)=a_1\otimes a_2$ (with Sweedler notation).
  
  \end{prop}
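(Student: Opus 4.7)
The plan is to show that both sides of the asserted identity define algebra morphisms $A \to \mathrm{End}(U(\tilde\p^-))$, and then to check their equality on the set of algebra generators $\mathfrak r \cup \m$ of $A$.

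First I would verify the formula on generators. For $z \in \mathfrak r$ the coproduct gives $\Delta_A(z) = z\otimes 1 + 1\otimes z$, the antipode gives $z^\top = -z$, and so $L(z_1)\circ R(z_2^\top) = L(z)\circ R(1) + L(1)\circ R(-z) = L(z) - R(z)$. Applied to any $u \in U(\tilde\p^-)$, this gives $zu - uz = ad\,z(u) = ad^{**}z(u)$ by equation (\ref{actionAr}). For $x \in \m$ we likewise have $\Delta_A(x) = x\otimes 1 + 1\otimes x$, and since $R(x) = 0$ by (\ref{ractionm}), we get $L(x_1)\circ R(x_2^\top) = L(x)\circ R(1) + L(1)\circ R(-x) = L(x) = ad^{**}x$ by (\ref{lactionm}). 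So the identity holds on generators.

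The left-hand side $a \mapsto ad^{**}a$ is an algebra morphism by Lemma \ref{adjaction}. For the right-hand side $\phi(a) := L(a_1)\circ R(a_2^\top)$, one has $\phi(1) = \mathrm{id}$, and using that $\Delta_A$ is an algebra morphism (so $\Delta_A(ab) = (a_1 b_1)\otimes(a_2 b_2)$) and the antipode reverses products ($(a_2 b_2)^\top = b_2^\top a_2^\top$), one computes
\begin{equation*}
\phi(ab) = L(a_1 b_1)\circ R(b_2^\top a_2^\top) = L(a_1)\circ L(b_1)\circ R(a_2^\top)\circ R(b_2^\top),
\end{equation*}
which coincides with $\phi(a)\circ\phi(b) = L(a_1)\circ R(a_2^\top)\circ L(b_1)\circ R(b_2^\top)$ provided that $L(b_1)$ and $R(a_2^\top)$ commute. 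Thus the key technical step is to establish that $L$ and $R$ endow $U(\tilde\p^-)$ with an $(A,A)$-bimodule structure, i.e.\ that $L(a)\circ R(b) = R(b)\circ L(a)$ for every $a, b \in A$.

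For this bimodule compatibility I would again reduce to generators. If $a = u' \in U(\mathfrak r)$ and $b = v' \in U(\mathfrak r)$, the commutation $L(u')\circ R(v') = R(v')\circ L(u')$ is just associativity in $U(\tilde\p^-)$. If $a$ or $b$ lies in $\m$ and the other is in $\m$, then one of $L(a), R(b)$ vanishes by (\ref{ractionm}) and (\ref{lactionm}), so commutation is trivial. The remaining case is $a = x \in \m$, $b = u' \in U(\mathfrak r)$: for $s \in S(\m^-)$ and $v' \in U(\mathfrak r)$, equations (\ref{actionA}) and (\ref{actionpart}) give
\begin{equation*}
ad^{**}x(R(u')(sv')) = ad^{**}x(sv'u') = \tilde\theta(ad^*x(s))v'u' = R(u')(ad^{**}x(sv')),
\end{equation*}
so $L(x)\circ R(u') = R(u')\circ L(x)$. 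Since $L$ is an algebra morphism on $A$ and $R$ is an anti-morphism, the generator-level commutations propagate to arbitrary $a, b \in A$, yielding the $(A,A)$-bimodule structure and hence $\phi(ab) = \phi(a)\circ\phi(b)$. The main obstacle is precisely this bimodule check, because $L$ and $R$ are not both regular representations: one must use the explicit definition of $ad^{**}$ via the partial symmetrisation $\tilde\theta$ and the fact that $R$ kills $\m$ to avoid any clash with the non-commutativity of $\tilde\theta(ad^*x(s))$ inside $U(\tilde\p^-)$. Once the bimodule property is in hand, both $ad^{**}$ and $\phi$ are algebra morphisms $A \to \mathrm{End}(U(\tilde\p^-))$ agreeing on a generating set, hence they coincide, which is the statement.
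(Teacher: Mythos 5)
Your proof is correct, but it is organized differently from the paper's. The paper works with the vector space decomposition $A\simeq U(\mathfrak r)\oplus\m\otimes T(\m)\otimes U(\mathfrak r)$ and verifies the identity directly on each summand: for $a\in U(\mathfrak r)$ it invokes the classical Hopf-algebra formula for the adjoint action, and for $a=uu'$ with $u\in\m\otimes T(\m)$ it expands $\Delta_A(u)=u\otimes 1+\sum_i u_{1i}\otimes u_{2i}$ and uses that $R$ kills every $u_{2i}^{\top}$ of positive tensor degree to collapse the right-hand side to $L(u)\circ ad^{**}u'=ad^{**}(uu')$. You instead check the identity only on the generators $\mathfrak r\cup\m$ and argue that both sides are algebra morphisms $A\to\mathrm{End}(U(\tilde\p^-))$; the only nontrivial point in your route is the multiplicativity of $a\mapsto L(a_1)\circ R(a_2^{\top})$, which you correctly reduce to the commutation $L(a)\circ R(b)=R(b)\circ L(a)$. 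That commutation is precisely equation (\ref{commRL}), which the paper records only \emph{after} the proposition, as a Remark, and justifies by the same case analysis you give (associativity of $U(\tilde\p^-)$ for $a,b\in U(\mathfrak r)$, vanishing of $R(b)$ for $b$ of positive degree in $T(\m)$, and equation (\ref{actionpart}) for the mixed case $a=x\in\m$, $b\in U(\mathfrak r)$). So your argument front-loads the bimodule property and turns the proposition into a ``two morphisms agreeing on generators'' statement, which is more structural and makes transparent why the standard Hopf-theoretic expression for the adjoint action survives here; the paper's computation is more direct and needs only the $U(\mathfrak r)$ case of the formula plus the vanishing of $R$ on the positive-degree part of $T(\m)$, not the full strength of (\ref{commRL}). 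Both arguments are complete.
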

\begin{proof}
Observe that, as vector spaces, one has \begin{equation*}A\simeq U(\mathfrak r)\oplus\m\otimes T(\m)\otimes U(\mathfrak r).\end{equation*}
Let $a\in U(\mathfrak r)$. Then in this case, $ad^{**}a=ad\,a$  and for all $u\in U(\tilde\p^-)$ one has $(L(a_1)\circ R(a_2^{\top}))(u)=a_1ua_2^{\top}$ (product in $U(\tilde\p^-)$).
Hence the required equation (\ref{adjointaction}) in this case, by \cite[1.3.1]{J2}.

Assume now that $a=uu'$ with $u\in\m\otimes T(\m)$ and $u'\in U(\mathfrak r)$. Set $\Delta_A(u)=u_1\otimes u_2$ and $\Delta_A(u')=u'_1\otimes u'_2$. We have that $u'_1,\,u'_2\in U(\mathfrak r)$ and $u_1,\,u_2\in T(\m)$ and $\Delta_A(a)=u_1u'_1\otimes u_2u'_2$. Moreover one has that $\Delta_A(u)=u\otimes 1+\sum_{i\in I} u_{1i}\otimes u_{2i}$ with for all $i\in I$,  $u_{2i}\in\m\otimes T(\m)$.
But if $u_{2i}\in\m\otimes T(\m)$ then $R({u}^{\top}_{2i})=0$. It follows that, for all $v\in U(\tilde\p^-)$, one has  
\begin{align*}(L(a_1)\circ R(a_2^{\top}))(v)&=(L(uu'_1)\circ R({u'}_2^{\top}))(v)\\
&=(L(u)\circ L(u'_1)\circ R({u'}_2^{\top}))(v)\\
&=(L(u)\circ ad^{**} u')(v)\\
&=(ad^{**}u\circ ad^{**} u')(v)\\
&=ad^{**} a(v)
\end{align*} 
which completes the proof.
\end{proof}

\begin{Rq}

Actually one also has that 
\begin{equation}\forall a\in A,\,\forall b\in A,\,R(b)\circ L(a)=L(a)\circ R(b).\label{commRL}\end{equation}
\rm Indeed 
if $a,\,b\in U(\mathfrak r)$ or if $b\in \m\otimes T(\m)\otimes U(\mathfrak r)$ then equation (\ref{commRL}) is immediate by the associativity of the product in $U(\tilde\p^-)$ or because, in the second case, that $R(b)=0$. Finally when $b\in U(\mathfrak r)$ and $a\in\m\otimes T(\m)\otimes U(\mathfrak r)$, equation (\ref{commRL}) follows from equation (\ref{actionpart}).

\end{Rq}

\section{Matrix coefficients of $\widetilde{V}(\lambda)$.}

\subsection{Definitions and further notation.}\label{def}
Let $\lambda\in P^+(\pi)$. Here we use the notation and results of subsection \ref{descr}.
By Lemma \ref{descr} the degenerate highest weight module $\widetilde{V}(\lambda)$ is endowed with a left $U(\tilde\p^-)$-module structure. Denote by $\widetilde{V}(\lambda)^*$ its dual vector space. Let $\tilde v\in\widetilde{V}(\lambda)$, $\tilde\xi\in\widetilde{V}(\lambda)^*$, $u\in U(\tilde\p^-)$.
Denoting by $u.\tilde v$ the action of $u$ on $\tilde v$ for this left $U(\tilde\p^-)$-module structure  on $\widetilde{V}(\lambda)$,
we denote by $\tilde\xi.u$ the right action it implies on $\widetilde{V}(\lambda)^*$, namely
  $(\tilde\xi.u)(\tilde v)=\tilde\xi(u.\tilde v)$.
Recall $\tilde{v}_{\lambda}=gr_0(v_{\lambda})$ and the isomorphism $\beta_{\lambda}:\widetilde{V}(\lambda)\longrightarrow V(\lambda)$  which is an isomorphism of left $U(\mathfrak r)$-modules. In particular this isomorphism preserves the weights. Its dual map $^t\beta_{\lambda}:V(\lambda)^*\longrightarrow \widetilde{V}(\lambda)^*$ is also an isomorphism of right $U(\mathfrak r)$-modules. By definition of $\beta_{\lambda}$, one has that $\beta_{\lambda}(\tilde v_{\lambda})=v_{\lambda}$ and then $\beta_{\lambda}(\widetilde{V'}(\lambda))=\beta(U(\mathfrak r).\tilde v_{\lambda})=U(\mathfrak r).v_{\lambda}=V'(\lambda)$. 

Recall $\tilde \n^-=\n^-_{\pi'}\ltimes(\m^-)^a\subset\tilde\p^-$.
 Set $\tilde{\xi}_{\lambda}={^t\beta_{\lambda}}(\xi_{\lambda})$ where $\xi_{\lambda}$ is the unique vector in $V(\lambda)^*$ of (right) weight $\lambda$ such that $\xi_{\lambda}(v_{\lambda})=1$. Then $\tilde{\xi}_{\lambda}$ is the unique vector in $\widetilde{V}(\lambda)^*$ of weight $\lambda$ such that $\tilde{\xi}_{\lambda}(\tilde{v}_{\lambda})=1$ and by weight considerations one has that, for all $y\in\tilde\n^-$, $\tilde\xi_{\lambda}.y=0$ since $\xi_{\lambda}$ is a vector of lowest weight in $V(\lambda)^*$. Recall that $v_{w_0\lambda}$ is a chosen nonzero vector in $V(\lambda)$ of weight $w_0\lambda$ and that it is a lowest weight vector in $V(\lambda)$. Then set $\xi_{w_0\lambda}\in V(\lambda)^*$ the vector of (right) weight $w_0\lambda$ such that $\xi_{w_0\lambda}(v_{w_0\lambda})=1$ : $\xi_{w_0\lambda}$ is a highest weight vector in $V(\lambda)^*$. Set also $\tilde v_{w_0\lambda}=\beta_{\lambda}^{-1}(v_{w_0\lambda})\in\widetilde{V}(\lambda)$. By weight considerations one has that $y.\tilde v_{w_0\lambda}=0$ for all $y\in\tilde\n^-$. Set also $\tilde\xi_{w_0\lambda}={^t\beta_{\lambda}}(\xi_{w_0\lambda})\in\widetilde{V}(\lambda)^*$. Then $\tilde\xi_{w_0\lambda}(\tilde v_{w_0\lambda})=1$. 
Since all vectors in $V(\lambda)$ of weight $w_0\lambda$ are proportional, one may observe that there exists $k_0\in\mathbb N$ such that 
 \begin{equation}v_{w_0\lambda}\in \mathscr F^{k_0}(V(\lambda)).\label{k0}\end{equation}  Then one has that 
 \begin{equation}\tilde v_{w_0\lambda}=gr_{k_0}(v_{w_0\lambda}).\label{gr}\end{equation}

 Set $V''(\lambda)=U(\mathfrak r).v_{w_0\lambda}=U(\n_{\pi'}).v_{w_0\lambda}$ : it is an irreducible $U(\mathfrak r)$-module of lowest weight $w_0\lambda$. Setting $\widetilde{V''}(\lambda)=\beta_{\lambda}^{-1}(V''(\lambda))\subset\widetilde{V}(\lambda)$, we have that $\widetilde{V''}(\lambda)=U(\mathfrak r).\tilde v_{w_0\lambda}$.
Then its dual space $\widetilde{V''}(\lambda)^*$ is such that $\widetilde{ V''}(\lambda)^*={^t\beta_{\lambda}}(V''(\lambda)^*)=\tilde\xi_{w_0\lambda}.U(\n^-_{\pi'})=\tilde\xi_{w_0\lambda}.U(\mathfrak r)\subset\widetilde{V}(\lambda)^*$.  

Since $U(\tilde\p^-)$ is a representation in $\widetilde{V}(\lambda)$ by $(i)$ of Lemma \ref{descr}, we may consider by \cite[2.7.8]{D} the space $C(\widetilde{V}(\lambda))$ of matrix coefficients of $\widetilde{V}(\lambda)$ which is the $\Bbbk$-vector subspace of $U(\tilde\p^-)^*$ generated by the linear forms $c^{\lambda}_{\xi,\,v}$ or simply $c_{\xi,\,v}$ defined by $$c_{\xi,\,v}:\;u\in U(\tilde\p^-)\mapsto \xi(u.v)\in\Bbbk$$ for all $\xi\in\widetilde{V}(\lambda)^*$ and $v\in\widetilde{V}(\lambda)$.
By equation (\ref{V}) of Lemma \ref{descr}, we may also define the $\Bbbk$-vector subspace of $C(\widetilde{V}(\lambda))$ generated by the matrix coefficients $c_{\xi,\,v'}$ with $\xi\in\widetilde{V}(\lambda)^*$ and $ v'\in\widetilde{V'}(\lambda)\subset\widetilde{V}(\lambda)$, which we will denote by $\widetilde C_{\p}(\lambda)$.

Finally denote  by $\widetilde C_{\mathfrak r}(\lambda)$ the subspace of $\widetilde C_{\p}(\lambda)$ generated by the matrix coefficients $c_{\xi,\,v}$ where $\xi\in\widetilde{V''}(\lambda)^*$ and $v\in\widetilde{V'}(\lambda)$.

\subsection{Tensor decomposition.}\label{tensor}

\begin{lm}
Let  $\lambda,\,\mu\in P^+(\pi)$. Then $\widetilde{V'}(\lambda)\otimes\widetilde{V'}(\mu)$, resp. $\widetilde {V''}(\lambda)^*\otimes\widetilde {V''}(\mu)^*$,  is a direct sum of some copies of $\widetilde{V'}(\nu)$, resp. $\widetilde{ V''}(\nu)^*$, for $\nu\in P^+(\pi)$.
 Each of them contains the unique copy of $\widetilde{V'}(\lambda+\mu)$, resp. of $\widetilde{V''}(\lambda+\mu)^*$.
\end{lm}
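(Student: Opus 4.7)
Both statements reduce to complete reducibility for the reductive Lie algebra $\mathfrak r$ (finite-dimensional $U(\mathfrak r)$-modules are semisimple), combined with a short argument ensuring that the highest weights of all irreducible summands lie in $P^+(\pi)$ and not merely in the dominant chamber of $\mathfrak r'$. Identification of each summand with a $\widetilde{V'}(\nu)$ (resp.\ $\widetilde{V''}(\nu)^*$) then follows from the standard classification of irreducible finite-dimensional $\mathfrak r$-modules by their highest weight, the centre of $\mathfrak r$ (which sits in $\h^{\pi\setminus\pi'}$) acting by the scalar prescribed by that highest weight.

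For $\widetilde{V'}(\lambda)\otimes\widetilde{V'}(\mu)$, I would first use Lemma~\ref{descr}(iii) to note that $\widetilde{V'}(\lambda)\simeq V'(\lambda)=U(\n^-_{\pi'}).v_{\lambda}$ is irreducible as a $U(\mathfrak r)$-module, with weight set contained in $\lambda-\mathbb N\pi'$, and similarly for $\mu$. The highest weight of any irreducible summand is therefore of the form $\nu=\lambda+\mu-\sum_{\alpha\in\pi'}n_{\alpha}\alpha$ with $n_{\alpha}\in\mathbb N$, and its dominance on the coroots of $\pi'$ is automatic. For $\alpha\in\pi\setminus\pi'$ I would use the identity
\[
\langle\nu,\alpha\check\null\rangle=\langle\lambda+\mu,\alpha\check\null\rangle-\sum_{\beta\in\pi'}n_{\beta}\langle\beta,\alpha\check\null\rangle\ge 0,
\]
which follows from $\lambda+\mu\in P^+(\pi)$ and the sign $\langle\beta,\alpha\check\null\rangle\le 0$ of off-diagonal Cartan entries for $\beta\in\pi'$ and $\alpha\in\pi\setminus\pi'$. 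Hence $\nu\in P^+(\pi)$ and the summand is isomorphic to $\widetilde{V'}(\nu)$. For the unique copy of $\widetilde{V'}(\lambda+\mu)$: the weight $\lambda+\mu$ has multiplicity one in the tensor product (being the product of two multiplicity-one extremal weight spaces), realised by $\tilde v_{\lambda}\otimes\tilde v_{\mu}$, which is annihilated by $\n_{\pi'}$, so $U(\mathfrak r).(\tilde v_{\lambda}\otimes\tilde v_{\mu})$ is an irreducible submodule of highest weight $\lambda+\mu$, i.e.\ $\widetilde{V'}(\lambda+\mu)$, and it is the only such summand by the multiplicity-one property.

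For $\widetilde{V''}(\lambda)^*\otimes\widetilde{V''}(\mu)^*$, I would argue symmetrically. From subsection~\ref{def}, $\widetilde{V''}(\lambda)\simeq V''(\lambda)=U(\n_{\pi'}).v_{w_0\lambda}$ is irreducible with weights in $w_0\lambda+\mathbb N\pi'$, so the weights of $\widetilde{V''}(\lambda)^*$ lie in $-w_0\lambda-\mathbb N\pi'$. The highest weight of any irreducible summand of the tensor product is of the form $\eta_0=-w_0(\lambda+\mu)-\sum_{\alpha\in\pi'}n_{\alpha}\alpha$ with $n_{\alpha}\in\mathbb N$. For $\beta\in\pi\setminus\pi'$, setting $\beta^{\ast}=-w_0\beta\in\pi$ (since $-w_0$ is the Dynkin involution of $\pi$), I would note that $\langle-w_0(\lambda+\mu),\beta\check\null\rangle=\langle\lambda+\mu,\beta^{\ast}\check\null\rangle\ge 0$ and combine this with the same Cartan sign argument to obtain $\langle\eta_0,\beta\check\null\rangle\ge 0$, so $\eta_0\in P^+(\pi)$. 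Since $-w_0$ is a bijection of $P^+(\pi)$, one writes $\eta_0=-w_0\nu$ uniquely with $\nu\in P^+(\pi)$, and the summand is the irreducible $\mathfrak r$-module of highest weight $-w_0\nu$, namely $\widetilde{V''}(\nu)^*$. The unique copy of $\widetilde{V''}(\lambda+\mu)^*$ is generated by $\tilde\xi_{w_0\lambda}\otimes\tilde\xi_{w_0\mu}$, which has weight $-w_0(\lambda+\mu)$ of multiplicity one and is killed by $\n_{\pi'}$ under the dual action, by weight considerations.

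The main obstacle I anticipate is precisely the $P^+(\pi)$-dominance verification above, ensuring that the highest weight of each summand lies in the $\g$-dominant cone and not merely in the $\mathfrak r'$-dominant one; this is driven by the sign of off-diagonal Cartan entries and, for the second statement, by the fact that $-w_0$ stabilises both $\pi$ and $P^+(\pi)$. The remaining ingredients are standard: complete reducibility for $\mathfrak r$ and the highest-weight classification of its irreducible finite-dimensional modules.
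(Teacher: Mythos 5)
Your proof is correct and follows essentially the same route as the paper's: complete reducibility over the semisimple algebra $\mathfrak r'$, the observation that all weights of the tensor product lie in $\lambda+\mu-\mathbb N\pi'$ so that the highest weight of each summand is automatically dominant on the coroots of $\pi\setminus\pi'$, multiplicity one of the extreme weight for the uniqueness assertion, and the identification $\widetilde{V''}(\lambda)^*\simeq\widetilde{V'}(-w_0\lambda)$ for the dual statement. The only cosmetic difference is that you establish dominance by the explicit Cartan-matrix sign computation, whereas the paper additionally invokes the annihilation of $\widetilde{V'}(\lambda)$ by $\m$ so that the highest weight vectors are killed by all of $\n$; both yield the same conclusion.
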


\begin{proof}

The proof is similar as  the proof of \cite[lemma 2.2]{FJ1}. We give it for the reader's convenience.
Let $\nu$ be an  $\h$-weight of $\widetilde{V'}(\lambda)$. Then $\nu\in\lambda-\mathbb N\pi'$. Since $\lambda\in P^+(\pi)$ we have that, for all $\alpha\in\pi\setminus\pi'$, $\langle\alpha\check\null,\,\nu\rangle\in\mathbb N$. Moreover every vector in $\widetilde{V'}(\lambda)$ is annihilated by $\rho_{\lambda}(\m)$ by remark \ref{anulm}. Since $\mathfrak r'$ is a semisimple Lie algebra, the finite dimensional $U(\mathfrak r')$-module (for diagonal action) $\widetilde{V'}(\lambda)\otimes\widetilde{V'}(\mu)$ decomposes into a direct sum of irreducible $U(\mathfrak r')$-modules, each of them being generated by  a highest weight nonzero vector whose $\h$-weight actually belongs to $P^+(\pi)$ : this highest weight nonzero vector $\tilde v$ is indeed such that $(\rho_{\lambda}\otimes\rho_{\mu})(x)(\tilde v)=0$ for all $x\in\n$, where $\rho_{\lambda}\otimes\rho_{\mu}$ is the tensor product of the representations $\rho_{\lambda}$ and $\rho_{\mu}$ as defined for instance in \cite[1.2.14]{D} and its $\h$-weight belongs to $\lambda+\mu-\mathbb N\pi'$.
Thus the tensor product $\widetilde{V'}(\lambda)\otimes\widetilde{V'}(\mu)$ is a direct sum of some copies of $\widetilde{V'}(\nu)$, for $\nu\in P^+(\pi)$ such that $\nu\in\lambda+\mu-\mathbb N\pi'$. Moreover $U(\mathfrak r).(\widetilde{v}_{\lambda}\otimes\widetilde{v}_{\mu})$ is the unique copy of $\widetilde{V}'(\lambda+\mu)$ which occurs in this tensor product.

Observe that $\widetilde{V}(\lambda)^*$ may be viewed as a left $U(\mathfrak r)$-module, by setting for all $u\in U(\mathfrak r)$, for all $\xi\in\widetilde{V}(\lambda)^*$, $u.\xi=\xi.u^{\top}$. Then $\widetilde{V''}(\lambda)^*\simeq\widetilde{V'}(-w_0\lambda)$ as left $U(\mathfrak r)$-modules. Then one obtains similarly the second part of the lemma, $\widetilde {V''}(\lambda)^*\otimes\widetilde {V''}(\mu)^*$ being a direct sum of some copies of $\widetilde{V''}(\nu)^*$, with $\nu\in P^+(\pi)$ such that $w_0\nu\in w_0\lambda+w_0\mu+\mathbb N\pi'$.
Finally $(\widetilde{\xi}_{w_0\lambda}\otimes\widetilde{\xi}_{w_0\mu}).U(\mathfrak r)$ is the unique copy of $\widetilde{V''}(\lambda+\mu)^*$ which occurs in the tensor product $\widetilde {V''}(\lambda)^*\otimes\widetilde {V''}(\mu)^*$. \end{proof}

\subsection{Direct sums.}\label{C}

Recall that the dual vector space $U(\tilde\p^-)^*$ of $U(\tilde\p^-)$ is an associative algebra with product given by the dual map of  the coproduct in $U(\tilde\p^-)$ (see for instance \cite[2.7.4]{D}).
\begin{lm}
The sum $\widetilde{C}_{\p}=\sum_{\lambda\in P^+(\pi)}\widetilde {C}_{\p}(\lambda)$ is a direct sum. The same holds for ${\widetilde C}_{\mathfrak r}=\sum_{\lambda\in P^+(\pi)}{\widetilde C}_{\mathfrak r}(\lambda)$. Moreover the latter is a subalgebra of $U(\tilde\p^-)^*$. 
\end{lm}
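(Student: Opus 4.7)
The plan has two parts, direct sums and subalgebra. For the direct sum claims, I use the right $U(\mathfrak r)$-action on $U(\tilde\p^-)^*$ induced by right multiplication in $U(\tilde\p^-)$. A direct computation gives $R_u c_{\xi, v} = c_{\xi, u.v}$ for $u \in U(\mathfrak r)$, so both $\widetilde{C}_{\p}(\lambda)$ and $\widetilde{C}_{\mathfrak r}(\lambda)$ are stable under this action, and for each fixed $\xi$ the map $v \mapsto c_{\xi, v}$ from $\widetilde{V'}(\lambda)$ into $U(\tilde\p^-)^*$ is $U(\mathfrak r)$-equivariant. Since $\widetilde{V'}(\lambda)$ is irreducible as a $U(\mathfrak r)$-module (lemma \ref{descr} (iii)) and $U(\tilde\p^-).\widetilde{V'}(\lambda) = \widetilde{V}(\lambda)$ by equation (\ref{V}), Schur's lemma makes this map either zero or injective, so $\widetilde{C}_{\p}(\lambda)$ --- and a fortiori $\widetilde{C}_{\mathfrak r}(\lambda)$ --- is isotypic of type $\widetilde{V'}(\lambda)$ under the right $U(\mathfrak r)$-action. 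Because these irreducibles have pairwise distinct $\mathfrak r$-highest weights $\lambda \in P^+(\pi)$, any finite partial sum $\sum_{\lambda \in F} \widetilde{C}_{\p}(\lambda)$ is a semisimple right $U(\mathfrak r)$-module whose distinct isotypic components are exactly the summands, so the sum is direct; passing to the limit gives $\widetilde{C}_{\p} = \bigoplus_\lambda \widetilde{C}_{\p}(\lambda)$, and the direct sum for $\widetilde{C}_{\mathfrak r}$ follows at once from the inclusion $\widetilde{C}_{\mathfrak r}(\lambda) \subset \widetilde{C}_{\p}(\lambda)$.

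For the subalgebra property, I exploit that multiplication in $U(\tilde\p^-)^*$ is convolution via the cocommutative Hopf coproduct on $U(\tilde\p^-)$, so $c_{\xi_1, v_1} \cdot c_{\xi_2, v_2}$ equals the matrix coefficient of the tensor-product $U(\tilde\p^-)$-module $\widetilde{V}(\lambda) \otimes \widetilde{V}(\mu)$ at $(\xi_1 \otimes \xi_2, v_1 \otimes v_2)$. In the basic highest-weight case $\xi_i = \tilde\xi_{w_0\lambda_i}$, $v_i = \tilde v_{\lambda_i}$: by the lemma of subsection \ref{tensor}, $\tilde v_\lambda \otimes \tilde v_\mu$ is the highest weight vector of the unique $\widetilde{V'}(\lambda+\mu)$-copy in $\widetilde{V'}(\lambda) \otimes \widetilde{V'}(\mu)$, and $\tilde\xi_{w_0\lambda} \otimes \tilde\xi_{w_0\mu}$ lies in the unique $\widetilde{V''}(\lambda+\mu)^*$-copy of $\widetilde{V''}(\lambda)^* \otimes \widetilde{V''}(\mu)^*$; a one-dimensionality argument on the $w_0(\lambda+\mu)$-weight subspace of $\widetilde{V}(\lambda+\mu)^*$ then yields $c_{\tilde\xi_{w_0\lambda}, \tilde v_\lambda} \cdot c_{\tilde\xi_{w_0\mu}, \tilde v_\mu} = \kappa \, c_{\tilde\xi_{w_0(\lambda+\mu)}, \tilde v_{\lambda+\mu}} \in \widetilde{C}_{\mathfrak r}(\lambda+\mu)$ for some scalar $\kappa$. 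For general $c_1, c_2$ I decompose $v_1 \otimes v_2$ and $\xi_1 \otimes \xi_2$ via two applications of the tensor-decomposition lemma and use the bi-$U(\mathfrak r)$-equivariance of the matrix-coefficient construction to spread the containment.

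The main obstacle will be extending this highest-weight computation to arbitrary elements of $\widetilde{C}_{\mathfrak r}(\lambda) \cdot \widetilde{C}_{\mathfrak r}(\mu)$. Because the left and right $U(\mathfrak r)$-actions on $U(\tilde\p^-)^*$ are not algebra automorphisms of the convolution --- they satisfy only a Leibniz-type rule induced by the coproduct --- one cannot naively propagate the highest-weight identity by bimodule action. Instead, a joint weight analysis must confirm that only the ``diagonal'' pairs $(\widetilde{V''}(\nu)^*, \widetilde{V'}(\nu))$ in the bi-$U(\mathfrak r)$-isotypic decomposition of $\widetilde{V''}(\lambda)^* \otimes \widetilde{V''}(\mu)^* \otimes \widetilde{V'}(\lambda) \otimes \widetilde{V'}(\mu)$ actually contribute to the product, pinning $c_1 c_2$ into $\sum_\nu \widetilde{C}_{\mathfrak r}(\nu)$.
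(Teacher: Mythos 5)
Your treatment of the direct sums is correct and is essentially the paper's own argument: the paper likewise fixes $\xi$ and observes that $v\mapsto c_{\xi,\,v}$ is a morphism of $U(\mathfrak r)$-modules for the coregular right action, so that each $\widetilde{C}_{\p}(\lambda)$ is a sum of irreducible $U(\mathfrak r)$-modules isomorphic to $\widetilde{V'}(\lambda)$, and the fact that the $\widetilde{V'}(\lambda)$, $\lambda\in P^+(\pi)$, are pairwise non-isomorphic forces the sum over $\lambda$ to be direct; the statement for $\widetilde{C}_{\mathfrak r}$ follows by inclusion. Nothing to add on that half.

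The subalgebra claim is where your proposal stops short of a proof. You correctly reduce the product $c_{\xi_1,\,v_1}\cdot c_{\xi_2,\,v_2}$ to a matrix coefficient of the tensor product representation $\widetilde{V}(\lambda)\otimes\widetilde{V}(\mu)$ at $(\xi_1\otimes\xi_2,\,v_1\otimes v_2)$, and you correctly identify the tensor decomposition lemma of subsection \ref{tensor} as the other essential input; these are exactly the two ingredients the paper invokes (the coefficient calculus of \cite[2.7.9--2.7.10]{D} together with lemma \ref{tensor}). But you then label the decisive step --- verifying that, after decomposing $v_1\otimes v_2$ over the copies of $\widetilde{V'}(\nu)$ and $\xi_1\otimes\xi_2$ over the copies of $\widetilde{V''}(\nu)^*$, the resulting pieces land in $\sum_{\nu}\widetilde{C}_{\mathfrak r}(\nu)$ --- as an ``obstacle'' that a ``joint weight analysis must confirm,'' without carrying out that analysis. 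Since that verification is the entire content of the assertion that $\widetilde{C}_{\mathfrak r}$ is closed under multiplication, the second half of your proposal is a plan rather than an argument. Two further remarks: your explicit computation in the extreme case $\xi_i=\tilde\xi_{w_0\lambda_i}$, $v_i=\tilde v_{\lambda_i}$ is correct but is not what this lemma requires --- it is the computation the paper uses later, in subsection \ref{filtrationC}, to show that $gr_{m,\,\mathscr F'}(c_{\lambda})gr_{m',\,\mathscr F'}(c_{\mu})$ is a nonzero multiple of $gr_{m+m',\,\mathscr F'}(c_{\lambda+\mu})$ --- and the difficulty you raise about propagating that identity through the $U(\mathfrak r)$-bimodule structure is an artifact of starting from the extreme case instead of arguing directly on an arbitrary pair of coefficients via the functorial properties of coefficient spaces (coefficients of submodules, quotients and direct summands are again coefficients), which is how the citation of \cite{D} closes the argument in the paper.
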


\begin{proof}
Let $\lambda\in P^+(\pi)$ and for all $\xi\in\widetilde{V}(\lambda)^*$, denote by $h_{\xi}:\widetilde{V'}(\lambda)\longrightarrow U(\tilde\p^-)^*$ the map such that $h_{\xi}(v)=c_{\xi,\,v}$ for all $v\in\widetilde{V'}(\lambda)$. 
For all $u'\in U(\mathfrak r)$, recall $R(u')$ the (regular) right action of $u'$ on $U(\tilde\p^-)$ defined by $R(u')(u)=uu'$ for all $u\in U(\tilde\p^-)$, where $uu'$ is the product  in $U(\tilde\p^-)$ (see subsection \ref{lractionsA}). Then its dual map  $^t R(u'):U(\tilde\p^-)^*\longrightarrow U(\tilde\p^-)^*$ defines a left action on $U(\tilde\p^-)^*$, called the coregular right representation of $U(\mathfrak r)$ on $U(\tilde\p^-)^*$ (see \cite[2.7.7]{D}). One sees easily that $h_{\xi}$ is a morphism of $U(\mathfrak r)$-modules, when $U(\tilde\p^-)^*$ is endowed with the coregular right representation of $U(\mathfrak r)$ (see also \cite[2.7.11]{D}). When $\xi\neq 0$, one checks that $h_{\xi}(\widetilde{V'}(\lambda))\neq \{0\}$ and then $h_{\xi}(\widetilde{V'}(\lambda))$ is an irreducible $U(\mathfrak r)$-module for the coregular right representation. Then $\widetilde C_{\p}(\lambda)=\sum_{\xi\in\widetilde{V}(\lambda)^*\setminus\{0\}}h_{\xi}(\widetilde {V'}(\lambda))$ is a sum of irreducible $U(\mathfrak r)$-modules all isomorphic to $\widetilde {V'}(\lambda)$. Since, for $\lambda\neq \,\mu\in P^+(\pi)$, $\widetilde {V'}(\lambda)$ and $\widetilde {V'}(\mu)$ are not isomorphic as $U(\mathfrak r)$-modules, it follows that $\sum_{\lambda\in P^+(\pi)}\widetilde C_{\p}(\lambda)$ is a direct sum. Obviously ${\widetilde C}_{\mathfrak r}=\sum_{\lambda\in P^+(\pi)}{\widetilde C}_{\mathfrak r}(\lambda)$ is also a direct sum, since ${\widetilde C}_{\mathfrak r}(\lambda)\subset\widetilde C_{\p}(\lambda)$ for all $\lambda\in P^+(\pi)$. Finally  ${\widetilde C}_{\mathfrak r}$ is an algebra by \cite[2.7.10]{D}, as a consequence of lemma \ref{tensor}.\end{proof}

\subsection{Isomorphisms.}\label{isomor}
Let $\lambda\in P^+(\pi)$ and set $\Phi_{\p}^{\lambda}:\widetilde{V}(\lambda)^*\otimes\widetilde{V'}(\lambda)\longrightarrow\widetilde{C}_{\p}(\lambda)$ defined by $\xi\otimes v'\mapsto c_{\xi,\,v'}$ and extended by linearity.
Similarly one sets $\Phi_{\mathfrak r}^{\lambda}:\widetilde{V^{''}}(\lambda)^*\otimes\widetilde{V'}(\lambda)\longrightarrow{\widetilde C}_{\mathfrak r}(\lambda)$ defined by $\xi\otimes v'\mapsto c_{\xi,\,v'}$ extended by linearity.

\begin{lm}
The maps $\Phi_{\p}^{\lambda}$ and $\Phi_{\mathfrak r}^{\lambda}$ are isomorphisms of vector spaces.

\end{lm}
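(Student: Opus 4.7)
The plan is to handle $\Phi_{\p}^{\lambda}$ first by showing it is surjective by construction and injective via a Jacobson-density argument on the irreducible $U(\mathfrak r)$-module $\widetilde{V'}(\lambda)$, and then to deduce the statement for $\Phi_{\mathfrak r}^{\lambda}$ from the injectivity of $\Phi_{\p}^{\lambda}$ together with the fact (recalled in subsection \ref{def}) that $\widetilde{V''}(\lambda)^* = {}^t\beta_{\lambda}(V''(\lambda)^*) \subset \widetilde{V}(\lambda)^*$.

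For surjectivity of $\Phi_{\p}^{\lambda}$, the space $\widetilde{C}_{\p}(\lambda)$ is by definition spanned by the matrix coefficients $c_{\xi,v'}$ with $\xi\in\widetilde{V}(\lambda)^*$ and $v'\in\widetilde{V'}(\lambda)$, which are precisely the images of pure tensors. For injectivity, I fix a basis $\{v_1,\dots,v_n\}$ of the finite-dimensional space $\widetilde{V'}(\lambda)$, write an element of $\widetilde{V}(\lambda)^*\otimes\widetilde{V'}(\lambda)$ uniquely as $\sum_{i=1}^n \xi_i\otimes v_i$ with $\xi_i\in\widetilde{V}(\lambda)^*$, and assume its image $\sum_i c_{\xi_i,v_i}$ vanishes. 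This says $\sum_i \xi_i(u.v_i)=0$ for all $u\in U(\tilde\p^-)$. Using the PBW decomposition $U(\tilde\p^-)\simeq S(\m^-)\otimes U(\mathfrak r)$ given in subsection \ref{adjaction}, I test on elements of the form $u = s\,u'$ with $s\in S(\m^-)$ and $u'\in U(\mathfrak r)$. Since $u'.v_i\in\widetilde{V'}(\lambda)$, I can write $u'.v_i=\sum_j a^{u'}_{ij}v_j$, whence
\[
\sum_{j} \Bigl(\sum_i a^{u'}_{ij}\xi_i\Bigr)(s.v_j)=0\quad\text{for all }s\in S(\m^-),\ u'\in U(\mathfrak r).
\]
Since $\widetilde{V'}(\lambda)$ is an irreducible finite-dimensional $U(\mathfrak r)$-module (lemma \ref{descr}$(iii)$), the Jacobson density theorem furnishes, for any fixed indices $(j_0,k_0)$, some $u'\in U(\mathfrak r)$ with $u'.v_i=\delta_{i,k_0}v_{j_0}$, i.e.\ $a^{u'}_{ij}=\delta_{i,k_0}\delta_{j,j_0}$. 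Substituting this choice gives $\xi_{k_0}(s.v_{j_0})=0$ for all $s\in S(\m^-)$ and all $j_0$. But by equation (\ref{V}) of lemma \ref{descr}, $\widetilde{V}(\lambda)=S(\m^-).\widetilde{V'}(\lambda)=\sum_{j_0}S(\m^-).v_{j_0}$, so $\xi_{k_0}$ vanishes on a spanning set of $\widetilde{V}(\lambda)$, hence $\xi_{k_0}=0$ for each $k_0$. This shows $\Phi_{\p}^{\lambda}$ is an isomorphism.

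For $\Phi_{\mathfrak r}^{\lambda}$, surjectivity onto $\widetilde{C}_{\mathfrak r}(\lambda)$ is again immediate from the definition of $\widetilde{C}_{\mathfrak r}(\lambda)$ in subsection \ref{def}. Injectivity follows at once from the injectivity of $\Phi_{\p}^{\lambda}$, since $\Phi_{\mathfrak r}^{\lambda}$ is the restriction of $\Phi_{\p}^{\lambda}$ to the subspace $\widetilde{V''}(\lambda)^*\otimes\widetilde{V'}(\lambda)\subset\widetilde{V}(\lambda)^*\otimes\widetilde{V'}(\lambda)$, the inclusion $\widetilde{V''}(\lambda)^*\subset\widetilde{V}(\lambda)^*$ being the one supplied in subsection \ref{def} via $\tilde\xi_{w_0\lambda}.U(\mathfrak r)\subset\widetilde{V}(\lambda)^*$.

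The only delicate point is the Jacobson-density step; all the remaining bookkeeping is forced by the PBW isomorphism and the irreducibility of $\widetilde{V'}(\lambda)$, so I expect no further obstacles.
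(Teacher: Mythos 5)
Your proposal is correct and follows essentially the same route as the paper's own proof: surjectivity is immediate from the definitions, and injectivity is obtained by combining the Jacobson density theorem for the irreducible $U(\mathfrak r)$-module $\widetilde{V'}(\lambda)$ with the factorization of $U(\tilde\p^-)$ through $S(\m^-)$ and $U(\mathfrak r)$ and the fact that $\widetilde{V}(\lambda)=S(\m^-).\widetilde{V'}(\lambda)$. The only cosmetic difference is that you argue directly by isolating each $\xi_{k_0}$ via a basis, whereas the paper argues by contradiction from a single nonzero $\xi_{i_0}$ using the cyclic vector $\tilde v_{\lambda}$; the underlying mechanism is identical.
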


\begin{proof}

Firstly these maps are obviously well defined and onto.
It remains to verify the injectivity. Assume that there exists $I$ a finite set, and for all $i\in I$, $\xi_i\in\widetilde{V}(\lambda)^*$ and $v'_i\in\widetilde{V'}(\lambda)=U(\mathfrak r).\tilde v_{\lambda}$ such that $\sum_{i\in I}c_{\xi_i,\,v'_i}=0$. We can also assume that the $v'_i$, $i\in I$, are linearly independent. We want to show that for all $i\in I$, $\xi_i=0$. Assume that there exists $i_0\in I$ such that $\xi_{i_0}\neq 0$ and complete $\xi_{i_0}$ in a basis of $\widetilde{V}(\lambda)^*$.  By taking the dual basis, there exists $v_{i_0}\in\widetilde{V}(\lambda)$ such that $\xi_{i_0}(v_{i_0})=1$. By $(i)$ of lemma \ref{descr} there exists $u_0\in U(\tilde\p^-)$ such that $v_{i_0}=u_0.\tilde v_{\lambda}$.
Now recall that $\widetilde{V'}(\lambda)$ is a left irreducible $U(\mathfrak r)$-module. Then by Jacobson density theorem (see \cite[Chap. 3,§ 3, 2]{Re}), there exists $a\in U(\mathfrak r)$ such that for all $i\in I\setminus\{i_0\}$, $a.v'_i=0$ and $a.v'_{i_0}=\tilde v_{\lambda}$. Since $u_0a\in U(\tilde\p^-)$ 
we obtain that $\sum_{i\in I} c_{\xi_i,\,v'_i}(u_0a)=0=\xi_{i_0}(u_0.(a.v'_{i_0}))=\xi_{i_0}(u_0.\tilde v_{\lambda})=\xi_{i_0}(v_{i_0})=1$ which is a contradiction. Hence the lemma for the map $\Phi_{\p}^{\lambda}$ and of course also for $\Phi_{\mathfrak r}^{\lambda}$.
\end{proof}

\subsection{The dual representation of $ad^{**}$.}\label{dualrepresentation}

Recall the left representation of $A$ in $U(\tilde\p^-)$ defined in subsection \ref{adjaction} we have denoted by $ad^{**}$. Then the dual representation of $A$ in $U(\tilde\p^-)^*$ is defined as follows.
\begin{equation}\forall a\in A,\,\forall f\in U(\tilde\p^-)^*,\;\;a.f=f\circ ad^{**} a^{\top}\label{actiondual}\end{equation}
(where recall $a^{\top}$ was defined in equation (\ref{antipode})).

This defines a left action of $A$ on $U(\tilde\p^-)^*$ (see for instance \cite[2.2.19]{D}) and by proposition \ref{lractionsA}, we deduce that one has
\begin{equation}\forall a\in A,\,\forall f\in U(\tilde\p^-)^*,\;\;a.f=f\circ L(a_2^{\top})\circ R(a_1)=({^t R}(a_1)\circ {^t L}(a_2^{\top}))(f)\label{actiondualbis}\end{equation}
where $\Delta_A(a)=a_1\otimes a_2$.

In particular one has that
\begin{equation}\forall x\in\p,\;\forall f\in U(\tilde\p^-)^*,\;x.f={^t R(x)}(f)-{^t L(x)}(f).\label{actionpsurdual}\end{equation}

Then one deduces the following lemma.
\begin{lm}
Let $\lambda\in P^+(\pi)$. One has that 
\begin{equation}\forall x\in\mathfrak r,\;\forall \xi\in\widetilde{V}(\lambda)^*,\;\forall v\in\widetilde{V'}(\lambda),\;x.c_{\xi,\,v}=c_{\xi,\,x.v}-c_{\xi.x,\,v}.\label{actionC}\end{equation}
\end{lm}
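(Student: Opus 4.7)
The plan is to specialize the general formula (\ref{actionpsurdual}) for the dual representation of $ad^{**}$ to $x \in \mathfrak{r}$ and then unravel the right-hand side by evaluating on an arbitrary $u \in U(\tilde\p^-)$. For $x \in \mathfrak{r}$, the coproduct in $A$ is $\Delta_A(x) = x \otimes 1 + 1 \otimes x$ and the antipode satisfies $x^\top = -x$, so (\ref{actionpsurdual}) applies directly; alternatively, one can start from (\ref{actiondual}) together with (\ref{actionAr}), which gives $x . c_{\xi,v} = c_{\xi,v} \circ ad^{**} x^\top = - c_{\xi,v} \circ ad\,x$.

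From here, I would evaluate at $u \in U(\tilde\p^-)$ and use the derivation property $ad\,x(u) = xu - ux$ (product in $U(\tilde\p^-)$), together with the definition of the matrix coefficient:
\begin{align*}
(x . c_{\xi,v})(u) &= -\,\xi\bigl((xu - ux).v\bigr) = -\xi\bigl(x.(u.v)\bigr) + \xi\bigl(u.(x.v)\bigr).
\end{align*}
The first term is, by the very definition of the right action of $U(\tilde\p^-)$ on $\widetilde{V}(\lambda)^*$ (given in subsection \ref{def} by $(\xi.x)(w) = \xi(x.w)$), equal to $-(\xi.x)(u.v) = -c_{\xi.x,\,v}(u)$; the second term is precisely $c_{\xi,\,x.v}(u)$. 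Since $v \in \widetilde{V'}(\lambda)$ and $\widetilde{V'}(\lambda) = U(\mathfrak{r}).\tilde v_{\lambda}$ is stable under $\mathfrak{r}$, the element $x.v$ still lies in $\widetilde{V'}(\lambda)$, so both matrix coefficients on the right are bona fide elements of $\widetilde C_{\p}(\lambda)$; this yields the claimed equality.

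There is no serious obstacle here: the content is entirely bookkeeping once one observes that on $U(\mathfrak{r}) \subset A$ the ``generalized'' adjoint action $ad^{**}$ reduces to the ordinary adjoint action by (\ref{actionAr}), and that the action induced on $U(\tilde\p^-)^*$ is the standard coadjoint-type action ${^tR}(x) - {^tL}(x)$. The only point to check carefully is the sign coming from the antipode $x^\top = -x$, which is what produces the difference (rather than a sum) on the right-hand side of (\ref{actionC}).
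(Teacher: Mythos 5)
Your proposal is correct and follows essentially the same route as the paper: the paper likewise invokes equation (\ref{actionpsurdual}) (equivalently, (\ref{actiondual}) with $ad^{**}x=ad\,x$ on $U(\mathfrak r)$) and identifies ${^tR(x)}(c_{\xi,v})=c_{\xi,x.v}$ and ${^tL(x)}(c_{\xi,v})=c_{\xi.x,v}$ directly from the definitions of $R$ and $L$. Your explicit evaluation at $u$ via $ad\,x(u)=xu-ux$ is just this same bookkeeping written out, with the signs handled correctly.
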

\begin{proof} Let $x\in\mathfrak r,\; \xi\in\widetilde{V}(\lambda)^*,\;v\in\widetilde{V'}(\lambda)$.
One checks easily that ${^t R(x)}(c_{\xi,\,v})=c_{\xi,\,x.v}$, resp. that ${^t L(x)}(c_{\xi,\,v})=c_{\xi.x,\,v}$, by definition of $R$, resp. of $L$, given in equation (\ref{ractionr}), resp.  in equation (\ref{lactionr}). Then the lemma follows from equation (\ref{actionpsurdual}).
\end{proof}

Let $\lambda\in P^+(\pi)$.
Endow $U(\tilde\p^-)^*$ with the dual representation of $A$ given by equation (\ref{actiondual}) and in particular with the dual representation of $U(\mathfrak r)\subset A$, which coincides in the latter case with the coadjoint representation of $U(\mathfrak r)$. By equation (\ref{actionC}) every $\widetilde{C}_{\p}(\lambda)$, resp. $\widetilde{C}_{\mathfrak r}(\lambda)$, is a left $U(\mathfrak r)$-module for the coadjoint representation.

On the other hand, endow  $\widetilde{V}(\lambda)$ with the left action of $U(\mathfrak r)$ described in subsection \ref{descr} and 
$\widetilde{V}(\lambda)^*$ with the left action of $U(\mathfrak r)$ corresponding with its dual representation, namely for all $\xi\in\widetilde{V}(\lambda)^*$, for all $u\in U(\mathfrak r)$, $u.\xi=\xi.u^{\top}$. Then endow the tensor product $\widetilde{V}(\lambda)^*\otimes\widetilde{V'}(\lambda)$ with the diagonal action of $U(\mathfrak r)$, namely for $u\in U(\mathfrak r)$ such that $\Delta(u)=u_1\otimes u_2$, for all $\xi\in\widetilde{V}(\lambda)^*$, for all $v'\in\widetilde{V'}(\lambda)$,
\begin{equation}u.(\xi\otimes v')=u_1.\xi\otimes u_2.v'=\xi.u_1^{\top}\otimes u_2.v'\label{diagaction}.\end{equation}

In particular, one has that
\begin{equation}\forall x\in\mathfrak r,\,\forall \xi\in\widetilde{V}(\lambda)^*,\,\forall v\in\widetilde{V'}(\lambda),\,x.(\xi\otimes v)=-\xi.x\otimes v+\xi\otimes x.v.\label{diagactionr}\end{equation}

Recall the isomorphisms of vector spaces $\Phi_{\p}^{\lambda}$ and $\Phi_{\mathfrak r}^{\lambda}$ defined in subsection \ref{isomor}.

\begin{prop}
Let $\lambda\in P^+(\pi)$.
With the left actions of $U(\mathfrak r)$ given by equation (\ref{diagaction}) and equation (\ref{actiondual}) respectively, the isomorphisms of vector spaces $\Phi_{\p}^{\lambda}$ and $\Phi_{\mathfrak r}^{\lambda}$ are isomorphisms of left $U(\mathfrak r)$-modules.

\end{prop}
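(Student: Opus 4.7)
The plan is to verify that $\Phi_{\p}^{\lambda}$ (and its restriction $\Phi_{\mathfrak r}^{\lambda}$) intertwine the two $U(\mathfrak r)$-module structures, using only the two compatibility identities already established, namely equation (\ref{actionC}) on the target side and equation (\ref{diagactionr}) on the source side. Since the maps are already known to be linear isomorphisms of vector spaces by Lemma \ref{isomor}, the only content is equivariance.

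First I would reduce the problem from $U(\mathfrak r)$ to $\mathfrak r$. Because $\mathfrak r$ generates $U(\mathfrak r)$ as an associative algebra, and because the diagonal $U(\mathfrak r)$-action on $\widetilde{V}(\lambda)^*\otimes\widetilde{V'}(\lambda)$ and the dual action of $A\supset U(\mathfrak r)$ on $U(\tilde\p^-)^*$ are both genuine $U(\mathfrak r)$-module structures (the first by the cocommutativity of $\Delta$, the second via equation (\ref{actiondual})), any map that is $\mathfrak r$-equivariant will automatically be $U(\mathfrak r)$-equivariant. So it suffices to check equivariance for $x\in\mathfrak r$.

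Next I would perform the direct computation on elementary tensors. Fix $x\in\mathfrak r$, $\xi\in\widetilde{V}(\lambda)^*$ and $v\in\widetilde{V'}(\lambda)$. On the source side, equation (\ref{diagactionr}) gives
\begin{equation*}
\Phi_{\p}^{\lambda}\bigl(x.(\xi\otimes v)\bigr)=\Phi_{\p}^{\lambda}(-\xi.x\otimes v+\xi\otimes x.v)=-c_{\xi.x,\,v}+c_{\xi,\,x.v}.
\end{equation*}
On the target side, equation (\ref{actionC}) gives
\begin{equation*}
x.\Phi_{\p}^{\lambda}(\xi\otimes v)=x.c_{\xi,\,v}=c_{\xi,\,x.v}-c_{\xi.x,\,v}.
\end{equation*}
The two right-hand sides coincide, so $\Phi_{\p}^{\lambda}$ commutes with the action of every $x\in\mathfrak r$ on elementary tensors, hence by linearity on all of $\widetilde{V}(\lambda)^*\otimes\widetilde{V'}(\lambda)$.

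Finally, I would observe that $\Phi_{\mathfrak r}^{\lambda}$ is the restriction of $\Phi_{\p}^{\lambda}$ to the $U(\mathfrak r)$-stable subspace $\widetilde{V''}(\lambda)^*\otimes\widetilde{V'}(\lambda)$: this subspace is stable because, for $x\in\mathfrak r$ and $\xi\in\widetilde{V''}(\lambda)^*$, one has $\xi.x\in\widetilde{V''}(\lambda)^*$ (since $\widetilde{V''}(\lambda)^*=\tilde\xi_{w_0\lambda}.U(\mathfrak r)$ is a right $U(\mathfrak r)$-module, hence a left one via the antipode of $U(\mathfrak r)$). Its image ${\widetilde C}_{\mathfrak r}(\lambda)$ is correspondingly $U(\mathfrak r)$-stable by equation (\ref{actionC}) applied with $\xi\in\widetilde{V''}(\lambda)^*$. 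Thus $\Phi_{\mathfrak r}^{\lambda}$ is also $U(\mathfrak r)$-equivariant. There is no real obstacle here: the entire statement is a bookkeeping check once the two defining equations (\ref{actionC}) and (\ref{diagactionr}) are in hand, the only subtle point being the reminder that a sign arising from the antipode on the dual side matches exactly the sign produced by the left--right exchange in the definition of $c_{\xi.x,v}$.
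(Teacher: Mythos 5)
Your proof is correct and follows exactly the paper's route: the paper disposes of this proposition by declaring it immediate from equations (\ref{actionC}) and (\ref{diagactionr}), which is precisely the elementary-tensor computation you carry out. Your added remarks on reducing from $U(\mathfrak r)$ to $\mathfrak r$ and on the stability of $\widetilde{V''}(\lambda)^*\otimes\widetilde{V'}(\lambda)$ are sound bookkeeping that the paper leaves implicit.
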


\begin{proof}
It is immediate by equations (\ref{actionC}) and (\ref{diagactionr}).
\end{proof}

\section{${\widetilde C}_{\mathfrak r}^{U(\mathfrak r')}$ is a polynomial algebra.}

\subsection{The semigroup $\mathscr D$.}\label{polynomial}

Recall $\mathfrak r'$  the derived subalgebra of $\mathfrak r$ : the former is a semi-simple Lie algebra. Denote by $(U(\tilde\p^-)^*)^{U(\mathfrak r')}$ the set of elements in $U(\tilde\p^-)^*$ which are invariant under the coadjoint action of $U(\mathfrak r')$. Since for all $z\in\mathfrak r'$, for all $u\in U(\tilde\p^-)$ such that $\Delta(u)=u_1\otimes u_2$, we have that $\Delta(ad\,z(u))=ad\,z(u_1)\otimes u_2+u_1\otimes ad\,z(u_2)$, the set $(U(\tilde\p^-)^*)^{U(\mathfrak r')}$  is an algebra.

For all $\lambda\in P^+(\pi)$, recall that ${\widetilde C}_{\mathfrak r}(\lambda)$ is a left $U(\mathfrak r')$-module (for the coadjoint representation of $U(\mathfrak r')$) by equation (\ref{actionC}). Then define ${\widetilde C}_{\mathfrak r}(\lambda)^{U(\mathfrak r')}$ as the set of elements in ${\widetilde C}_{\mathfrak r}(\lambda)$ which are invariant under the coadjoint action of $U(\mathfrak r')$ : this is of course a vector space.

Denote   by ${\widetilde C}_{\mathfrak r}^{U(\mathfrak r')}\subset (U(\tilde\p^-)^*)^{U(\mathfrak r')}$ the set of elements in ${\widetilde C}_{\mathfrak r}$ which are invariant under the coadjoint action of $U(\mathfrak r')$. Since $\widetilde{C}_{\mathfrak r}$ is an algebra by lemma \ref{C} and  by what we said above, ${\widetilde C}_{\mathfrak r}^{U(\mathfrak r')}$ is an algebra.

Since moreover the sum of the ${\widetilde C}_{\mathfrak r}(\lambda)$'s is a direct sum by lemma \ref{C}, we have that
\begin{equation}{\widetilde C}_{\mathfrak r}^{U(\mathfrak r')}=\bigoplus_{\lambda\in P^+(\pi)}{\widetilde C}_{\mathfrak r}(\lambda)^{U(\mathfrak r')}.\label{decompC}\end{equation}

Let $\mathscr D$ be the set of all $\lambda\in P^+(\pi)$ such that 
\begin{equation}(w_0'\lambda-w_0\lambda,\,\pi')=0.\label{DD}\end{equation}

\begin{prop}

One has that, for all $\lambda\in P^+(\pi)$, $\dim {\widetilde C}_{\mathfrak r}(\lambda)^{U(\mathfrak r')}\le 1$ with equality if and only if $\lambda\in\mathscr D$ and then
\begin{equation}{\widetilde C}_{\mathfrak r}^{U(\mathfrak r')}=\bigoplus_{\lambda\in\mathscr D}{\widetilde C}_{\mathfrak r}(\lambda)^{U(\mathfrak r')}.\end{equation}

\end{prop}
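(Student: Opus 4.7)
The plan is to transport the question through the isomorphism $\Phi_{\mathfrak r}^{\lambda}\colon \widetilde{V''}(\lambda)^*\otimes\widetilde{V'}(\lambda)\longrightarrow\widetilde{C}_{\mathfrak r}(\lambda)$ from Lemma \ref{isomor}. By the Proposition of subsection \ref{dualrepresentation}, this map intertwines the diagonal $U(\mathfrak r)$-action on the tensor product with the coadjoint $U(\mathfrak r)$-action on $\widetilde{C}_{\mathfrak r}(\lambda)$, so
\[\dim\widetilde{C}_{\mathfrak r}(\lambda)^{U(\mathfrak r')}=\dim\bigl(\widetilde{V''}(\lambda)^*\otimes\widetilde{V'}(\lambda)\bigr)^{U(\mathfrak r')}.\]
Using the standard natural identification of $U(\mathfrak r')$-modules $\widetilde{V''}(\lambda)^*\otimes\widetilde{V'}(\lambda)\simeq\operatorname{Hom}_{\Bbbk}(\widetilde{V''}(\lambda),\widetilde{V'}(\lambda))$, the $U(\mathfrak r')$-invariants on the left correspond precisely to the subspace $\operatorname{Hom}_{U(\mathfrak r')}(\widetilde{V''}(\lambda),\widetilde{V'}(\lambda))$ on the right.

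Next I would apply Schur's lemma. The module $\widetilde{V'}(\lambda)$ is irreducible over $U(\mathfrak r)$ by Lemma \ref{descr}(iii), and $\widetilde{V''}(\lambda)=U(\mathfrak r).\tilde v_{w_0\lambda}$ is irreducible over $U(\mathfrak r)$ too: it is $\beta_{\lambda}^{-1}(V''(\lambda))$, where $V''(\lambda)$ is the irreducible $U(\mathfrak r)$-module of lowest weight $w_0\lambda$ pointed out in subsection \ref{def}. Since $\mathfrak r=\mathfrak r'\oplus\mathfrak z(\mathfrak r)$ and the centre $\mathfrak z(\mathfrak r)$ acts by a character on each such irreducible, both remain irreducible over $U(\mathfrak r')$. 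Schur's lemma then forces $\dim\operatorname{Hom}_{U(\mathfrak r')}(\widetilde{V''}(\lambda),\widetilde{V'}(\lambda))\le 1$, with equality precisely when $\widetilde{V'}(\lambda)\simeq\widetilde{V''}(\lambda)$ as $U(\mathfrak r')$-modules.

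It remains to recognise this isomorphism condition as $\lambda\in\mathscr D$. Two finite-dimensional irreducible $U(\mathfrak r')$-modules are isomorphic if and only if they share the same lowest $\mathfrak h_{\pi'}$-weight. Now $\widetilde{V'}(\lambda)=U(\mathfrak n^-_{\pi'}).\tilde v_\lambda$ has lowest $\mathfrak h$-weight $w_0'\lambda$, while $\widetilde{V''}(\lambda)$ has lowest $\mathfrak h$-weight $w_0\lambda$ by construction. Restricting to $\mathfrak h_{\pi'}=\operatorname{span}_{\Bbbk}\{\alpha^{\vee}:\alpha\in\pi'\}$ and translating through the non-degenerate bilinear form $(\,\,,\,)$ induced by the Killing form, the equality of these two lowest weights becomes $(w_0'\lambda-w_0\lambda,\alpha)=0$ for every $\alpha\in\pi'$, that is, $\lambda\in\mathscr D$. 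The final displayed equality for $\widetilde{C}_{\mathfrak r}^{U(\mathfrak r')}$ then drops out of the direct sum decomposition \eqref{decompC}.

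The only step that deserves real care is the compatibility of the canonical identification $V^*\otimes W\simeq\operatorname{Hom}_{\Bbbk}(V,W)$ with the diagonal $U(\mathfrak r')$-action defined in equation \eqref{diagaction}: one must check that the appearance of the antipode there is exactly what is needed so that invariants of the diagonal action match $U(\mathfrak r')$-linear maps. This is routine but essential; once it is in place, Schur's lemma and the weight-matching calculation close the proof.
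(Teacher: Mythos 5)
Your proposal is correct and follows essentially the same route as the paper: transport the invariants through $\Phi_{\mathfrak r}^{\lambda}$, identify the tensor product with a Hom-space of $U(\mathfrak r')$-modules, apply Schur's lemma to the two irreducibles, and translate the isomorphism condition into $(w_0'\lambda-w_0\lambda,\pi')=0$ via the lowest weights $w_0'\lambda$ and $w_0\lambda$. The only cosmetic difference is that the paper works with $\operatorname{Hom}(\widetilde{V'}(\lambda)^*,\widetilde{V''}(\lambda)^*)$ and the dual modules $\widetilde{V'}(-w_0'\lambda)$, $\widetilde{V'}(-w_0\lambda)$ rather than your $\operatorname{Hom}(\widetilde{V''}(\lambda),\widetilde{V'}(\lambda))$, which changes nothing of substance.
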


\begin{proof}
The proof is quite similar as the proof in \cite[Thm. \S 3]{F2}. We give it for the reader's convenience. Fix $\lambda\in P^+(\pi)$.
Denote by ${\rm Hom}(\widetilde{V'}(\lambda)^*,\,\widetilde {V^{''}}(\lambda)^*)$ the set of all morphisms between the vector spaces $\widetilde{V'}(\lambda)^*$ and $\widetilde{V''}(\lambda)^*$, endowed with the $U(\mathfrak r')$-module structure given by 
\begin{multline}\forall u\in U(\mathfrak r'),\,\forall \varphi\in{\rm Hom}(\widetilde{V'}(\lambda)^*,\,\widetilde {V^{''}}(\lambda)^*),\,\forall\xi\in\widetilde{V'}(\lambda)^*,\\
(u.\varphi)(\xi)=u_2.\varphi(u_1^{\top}.\xi)\label{actionhom}\end{multline}
where $\Delta(u)=u_1\otimes u_2$.
Then (see for instance \cite[A.2.16]{J2}) the morphism \begin{equation}\Phi:\;\xi\otimes v'\in\widetilde {V^{''}}(\lambda)^*\otimes \widetilde{V'}(\lambda)\mapsto (\xi'\in\widetilde{V'}(\lambda)^*\mapsto \xi'(v')\xi)\label{iso}\end{equation} is an isomorphism of $U(\mathfrak r')$-modules between $\widetilde {V^{''}}(\lambda)^*\otimes \widetilde{V'}(\lambda)$, endowed with the diagonal action of $U(\mathfrak r')$ given by equation (\ref{diagaction}) and 
${\rm Hom}(\widetilde{V'}(\lambda)^*,\,\widetilde {V^{''}}(\lambda)^*)$, endowed with the action given by equation (\ref{actionhom}).

Denote by
${\rm Hom}_{U(\mathfrak r')}(\widetilde{V'}(\lambda)^*,\,\widetilde {V^{''}}(\lambda)^*)$ the set of all  $U(\mathfrak r')$-morphisms between $\widetilde{V'}(\lambda)^*$ and $\widetilde{V''}(\lambda)^*$ and by $\bigl(\widetilde{ V^{''}}(\lambda)^*\otimes \widetilde{V'}(\lambda)\bigr)^{U(\mathfrak r')}$ the set of elements in the tensor product $\widetilde{ V^{''}}(\lambda)^*\otimes \widetilde{V'}(\lambda)$ which are invariant under the diagonal action of $U(\mathfrak r')$ given by equation (\ref{diagaction}).
Then we have 
\begin{equation}\Phi(\bigl(\widetilde {V^{''}}(\lambda)^*\otimes \widetilde{V'}(\lambda)\bigr)^{U(\mathfrak r')})={\rm Hom}_{U(\mathfrak r')}(\widetilde{V'}(\lambda)^*,\,\widetilde {V^{''}}(\lambda)^*).\label{eq}\end{equation}

Moreover  the $U(\mathfrak r')$-modules $\widetilde{ V^{''}}(\lambda)^*$ and $ \widetilde{V'}(\lambda)$ (and also $\widetilde{V'}(\lambda)^*$) are irreducible. 
Then by Schur lemma (see for instance \cite[Chap. 3,\,\S\,3, \,1]{Re}), \begin{equation}\dim{\rm Hom}_{U(\mathfrak r')}(\widetilde{V'}(\lambda)^*,\,\widetilde {V^{''}}(\lambda)^*)\le 1\label{dim}\end{equation} with equality if and only if the irreducible $U(\mathfrak r')$-modules $\widetilde{V^{''}}(\lambda)^*$ and $\widetilde{V'}(\lambda)^*$ are isomorphic that is, if and only if \begin{equation}w_0'\lambda-w_0\lambda=\sum_{\alpha\in\pi\setminus\pi'}m_{\alpha}\varpi_{\alpha}\;\hbox{\rm with}\;m_{\alpha}\in\mathbb N,\,\forall\alpha\in\pi\setminus\pi'\label{eqD}\end{equation}
or equivalently if and only if $\lambda$ verifies equation (\ref{DD}) that is, if and only if $\lambda\in\mathscr D$.

Indeed we have that $\widetilde{V''}(\lambda)^*\simeq \widetilde{V'}(-w_0\lambda)$ as  left $U(\mathfrak r)$-modules by what we already said in the proof of Lemma \ref{tensor}. Similarly since $\widetilde{V'}(\lambda)=U(\n^-_{\pi'}).\tilde v_{\lambda}=U(\n_{\pi'}).\tilde v_{w_0'\lambda}$ where $\tilde v_{w_0'\lambda}$ is a chosen nonzero weight vector in $\widetilde{V'}(\lambda)$ of weight $w_0'\lambda$, we have that $\widetilde{V'}(\lambda)^*\simeq \widetilde{V'}(-w'_0\lambda)$ as left $U(\mathfrak r)$-modules. Then the irreducible $U(\mathfrak r')$-modules $\widetilde{V^{''}}(\lambda)^*$ and $\widetilde{V'}(\lambda)^*$ are isomorphic if and only if $(-w_0\lambda)'=(-w'_0\lambda)'$ where recall that the superscript ``prime''  denotes the projection in $P(\pi')$ of an element in $P(\pi)$ with respect to the decomposition (\ref{projP}).

By proposition \ref{dualrepresentation} one has that \begin{equation}{\widetilde C}_{\mathfrak r}(\lambda)^{U(\mathfrak r')}=\Phi_{\mathfrak r}^{\lambda}\Bigl(\bigl(\widetilde{ V^{''}}(\lambda)^*\otimes \widetilde{V'}(\lambda)\bigr)^{U(\mathfrak r')}\Bigr).\label{decompsum}\end{equation}
Then by equations (\ref{eq}) and (\ref{dim}) we have that $\dim{\widetilde C}_{\mathfrak r}(\lambda)^{U(\mathfrak r')}\le 1$ with equality if and only if $\lambda\in\mathscr D$.
For all $\lambda\in\mathscr D$ set $c_{\lambda}\in{\widetilde C}_{\mathfrak r}(\lambda)^{U(\mathfrak r')}\setminus\{0\}$ so that ${\widetilde C}_{\mathfrak r}(\lambda)^{U(\mathfrak r')}=\Bbbk c_{\lambda}$. By equation (\ref{decompC}) we also have
\begin{equation} {\widetilde C}_{\mathfrak r}^{U(\mathfrak r')}=\bigoplus_{\lambda\in\mathscr D}\Bbbk c_{\lambda}.\label{sumC}\end{equation}
This completes the proof.\end{proof}

Let $\lambda\in\mathscr D$. Choose $\varphi_{\lambda}\in{\rm Hom}_{U(\mathfrak r')}(\widetilde{V'}(\lambda)^*,\,\widetilde {V^{''}}(\lambda)^*)\setminus\{0\}$ and denote by $U(\mathfrak r')_+$ the kernel of the coidentity in the enveloping algebra $U(\mathfrak r')$. By \cite[7.1.16]{J2} we have that 
\begin{equation}\Phi^{-1}:\,{\rm Hom}(\widetilde{V'}(\lambda)^*,\,\widetilde {V^{''}}(\lambda)^*)\stackrel{\sim}{\longrightarrow} U(\mathfrak r')_+.(\tilde\xi_{w_0\lambda}\otimes\tilde v_{w_0'\lambda})\oplus \Bbbk \Phi^{-1}(\varphi_{\lambda}).\end{equation}
 It follows that we have, up to a nonzero scalar 
\begin{equation}({\Phi_{\mathfrak r}^{\lambda}})^{-1}(c_{\lambda})=\tilde\xi_{w_0\lambda}\otimes\tilde v_{w_0'\lambda}+\sum_{i\in I}u^-_i.\tilde\xi_{w_0\lambda}\otimes u^+_i.\tilde v_{w_0'\lambda}\label{descrinvariant}\end{equation}
where $u^{\pm}_i\in\n^{\pm}_{\pi'}U(\n_{\pi'}^{\pm})$ for all $i\in I$, $I$ a finite set, since moreover $\tilde\xi_{w_0\lambda}\otimes\tilde v_{w_0'\lambda}$ is a cyclic vector for the $U(\mathfrak r')$-module $\widetilde{ V^{''}}(\lambda)^*\otimes \widetilde{V'}(\lambda)$ endowed with the diagonal action. Hence the $\h$-weight of $c_{\lambda}$ is equal to \begin{equation}w_0'\lambda-w_0\lambda.\label{weightgenerator}\end{equation}

By equation (\ref{eqD}) this weight belongs to $P^+(\pi)$ and by equation (\ref{DD}) it annihilates on $\pi'$.

Let $i$ and $j$ denote the permutations in $\pi$ defined below. 
\begin{equation}\forall\alpha\in\pi,\,j(\alpha)=-w_0(\alpha)\end{equation}  
\begin{equation}\forall\alpha\in\pi', \,i(\alpha)=-w_0'(\alpha)\end{equation}
 \begin{equation}\forall\alpha\in\pi\setminus\pi',\,\begin{cases}i(\alpha)=j(\alpha)& {\rm if}\, j(\alpha)\not\in\pi'\\
  i(\alpha)=j(ij)^{r_{\alpha}}(\alpha)&{\rm otherwise}\end{cases}\end{equation} where $r_{\alpha}$ is the smallest integer such that $j(ij)^{r_{\alpha}}(\alpha)\not\in\pi'$. Let $E(\pi')$ be the set of $\langle ij\rangle$-orbits in $\pi$, where $\langle ij\rangle$ denotes the subgroup generated by the composition map $ij$. 

For instance, if $\p$ is a maximal parabolic subalgebra of $\g$ that is, if $\pi\setminus\pi'=\{\alpha\}$, then $i(\alpha)=\alpha$ and the $\langle ij\rangle$-orbit of $\alpha$ is $\Gamma_{\alpha}=\{(ji)^sj(\alpha),\;0\le s\le r_{\alpha}\}$.

Recall \cite[Thm. 1]{FJ1} (see also \cite[4.1]{F0}):
\begin{thm}
The set $\mathscr D$ is a free additive semigroup  generated by the $\mathbb Z$-linearly independent elements $d_{\Gamma}=\sum_{\gamma\in\Gamma}\varpi_{\gamma}$, $\Gamma\in E(\pi')$.
\end{thm}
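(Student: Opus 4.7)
The plan is to turn the orthogonality condition defining $\mathscr D$ into a combinatorial statement about the coefficients of $\lambda$ in the basis of fundamental weights, and then read off the generators $d_\Gamma$.

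First, I would reduce to a relation on simple roots. Since $(\,,\,)$ is $W$-invariant and $W'\subset W$, for every $\alpha\in\pi'$ the defining identities $j(\alpha)=-w_0(\alpha)$ and $i(\alpha)=-w_0'(\alpha)$ give
$$(w_0\lambda,\alpha)=-(\lambda,j(\alpha)),\qquad (w_0'\lambda,\alpha)=-(\lambda,i(\alpha)),$$
so $\lambda\in\mathscr D$ if and only if $(\lambda,i(\alpha))=(\lambda,j(\alpha))$ for all $\alpha\in\pi'$. Next, writing $\lambda=\sum_{\gamma\in\pi}m_\gamma\varpi_\gamma$ with $m_\gamma\in\mathbb N$, the standard orthogonality $(\varpi_\gamma,\beta)=\tfrac12\delta_{\beta\gamma}(\beta,\beta)$ for $\beta\in\pi$ turns this into $m_{i(\alpha)}(i(\alpha),i(\alpha))=m_{j(\alpha)}(j(\alpha),j(\alpha))$. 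Because $-w_0$ and $-w_0'$ are orthogonal involutions, the permutations $j$ and $i|_{\pi'}$ preserve root lengths, and the recursive definition $i(\alpha)=j(ij)^{r_\alpha}(\alpha)$ for $\alpha\in\pi\setminus\pi'$ (with $r_\alpha$ minimal) then forces all roots inside one $\langle ij\rangle$-orbit to have a common length. The defining condition of $\mathscr D$ therefore collapses to $m_{i(\alpha)}=m_{j(\alpha)}$ for every $\alpha\in\pi'$.

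The combinatorial heart of the proof, and what I expect to be the main obstacle, is to promote these equalities to the claim that $\gamma\mapsto m_\gamma$ is constant on every $\langle ij\rangle$-orbit $\Gamma\in E(\pi')$. For orbits contained in $\pi'$ this is direct: the relation just says $m$ is $ij^{-1}$-invariant on $\pi'$. For a mixed orbit, the piecewise definition of $i$ on $\pi\setminus\pi'$ is tailored precisely so that a telescoping chain of the relations $m_{i(\beta)}=m_{j(\beta)}$ at the intermediate indices $\beta\in\pi'$ yields $m_\alpha=m_{j(ij)^{r_\alpha}(\alpha)}$, that is, equality between the two endpoints in $\pi\setminus\pi'$ linked by the excursion through $\pi'$. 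The delicate point is verifying that these relations, imposed only at $\alpha\in\pi'$, suffice to identify all coefficients inside a $\langle ij\rangle$-orbit and impose no extra constraint outside.

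Once constancy on orbits is established, one obtains $\lambda=\sum_{\Gamma\in E(\pi')}n_\Gamma d_\Gamma$ with $n_\Gamma\in\mathbb N$, and a direct check (running the same orbit analysis in reverse) shows that each $d_\Gamma$ does lie in $\mathscr D$. This proves $\mathscr D=\sum_{\Gamma\in E(\pi')}\mathbb N d_\Gamma$ as additive monoids. Finally, the $\mathbb Z$-linear independence of the $d_\Gamma$'s is immediate: $E(\pi')$ partitions $\pi$, so the $d_\Gamma$'s are sums of the $\mathbb Z$-linearly independent family $\{\varpi_\gamma\}_{\gamma\in\pi}$ over disjoint subsets of $\pi$. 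Combined with the previous step, this exhibits $\mathscr D$ as a free additive semigroup with the announced set of generators.
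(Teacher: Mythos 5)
The paper itself gives no proof of this statement---it is recalled from \cite[Thm. 1]{FJ1} (see also \cite[4.1]{F0})---so there is no internal argument to compare against; your proof is correct and follows the same route as the cited one. In particular the step you flag as delicate does go through: writing the condition as $m_{i(\alpha)}=m_{j(\alpha)}$ for all $\alpha\in\pi'$ and applying it at the successive elements $j(ij)^{s}(\alpha)\in\pi'$, $0\le s<r_{\alpha}$ (using that $j$ and $i_{\mid\pi'}$ are involutions), one gets $m_{(ij)(\beta)}=m_{\beta}$ for every $\beta\in\pi$ --- the case $j(\beta)\in\pi'$ being immediate and the case $j(\beta)\notin\pi'$, $\beta\notin\pi'$ being trivial since then $(ij)(\beta)=\beta$ --- which is exactly constancy on $\langle ij\rangle$-orbits, and conversely constancy on orbits gives the relations since $i(\alpha)=(ij)(j(\alpha))$.
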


\subsection{A filtration on the algebra $\widetilde C_{\mathfrak r}$.}\label{filtrationC}

Assume that $\pi=\{\alpha_1,\,\ldots,\,\alpha_n\}$. Then  for all $\lambda\in P^+(\pi)$, there exist $k_i\in\mathbb Q_+$ for all $i$, $1\le i\le n$, such that $\lambda=\sum_{i=1}^nk_i\alpha_i$. Set $deg(\lambda)=2\sum_{i=1}^nk_i$. By \cite[7.1.25]{J2}, $deg(\lambda)\in\mathbb N$. For all $m\in\mathbb N$, we set $\mathscr F'_m(\widetilde C_{\mathfrak r})=\bigoplus_{\lambda\in P^+(\pi)\mid deg(\lambda)\le m}\widetilde C_{\mathfrak r}(\lambda)$, which is a left $U(\mathfrak r)$-submodule of $\widetilde C_{\mathfrak r}$ for coadjoint action. Then $(\mathscr F'_m(\widetilde C_{\mathfrak r}))_{m\in\mathbb N}$ is an increasing filtration $\mathscr F'$ of the algebra $\widetilde C_{\mathfrak r}$ since for all $\lambda,\,\mu\in P^+(\pi)$, \begin{equation}\widetilde C_{\mathfrak r}(\lambda)\widetilde C_{\mathfrak r}(\mu)\subset \bigoplus_{\nu\in\mathbb N\pi'\mid\lambda+\mu-\nu\in P^+(\pi)}\widetilde C_{\mathfrak r}(\lambda+\mu-\nu)\label{decomp}\end{equation} by the proof of lemma \ref{tensor}.
Then denote by $gr_{\mathscr F'}(\widetilde C_{\mathfrak r})$ the associated graded algebra and for all $c\in\mathscr F'_m(\widetilde C_{\mathfrak r})$, denote by $gr_{m,\,\mathscr F'}(c)$ its canonical image in $gr_{\mathscr F'}(\widetilde C_{\mathfrak r})$. Recall the notation in subsection \ref{polynomial}.

\begin{lm}

Let $\lambda,\,\mu\in\mathscr D$. Set $m=deg(\lambda)$ and $m'=deg(\mu)$. 

Then 
$gr_{m,\,\mathscr F'}(c_{\lambda})gr_{m',\,\mathscr F'}(c_{\mu})$ is a nonzero multiple of $gr_{m+m',\,\mathscr F'}(c_{\lambda+\mu}).$

\end{lm}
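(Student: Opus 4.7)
The plan is to decompose $c_\lambda c_\mu$ along the direct sum $\widetilde C_{\mathfrak r} = \bigoplus_\nu \widetilde C_{\mathfrak r}(\nu)$, isolate the piece surviving in top filtration degree, and identify it with a non-zero scalar multiple of $c_{\lambda+\mu}$. By (\ref{decomp}) I write $c_\lambda c_\mu = \sum_{\nu_0 \in \mathbb N\pi'} a_{\nu_0}$ with $a_{\nu_0} \in \widetilde C_{\mathfrak r}(\lambda+\mu-\nu_0)$, only finitely many summands being non-zero. By proposition \ref{dualrepresentation}, each $\widetilde C_{\mathfrak r}(\nu)$ is $U(\mathfrak r')$-stable under the coadjoint action, so the projection onto the $\nu$-component is $U(\mathfrak r')$-equivariant; since $c_\lambda c_\mu$ is $U(\mathfrak r')$-invariant as a product of invariants, each $a_{\nu_0}$ lies in $\widetilde C_{\mathfrak r}(\lambda+\mu-\nu_0)^{U(\mathfrak r')}$. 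By proposition \ref{polynomial} this space is at most one-dimensional, and equals $\Bbbk c_{\lambda+\mu-\nu_0}$ precisely when $\lambda+\mu-\nu_0 \in \mathscr D$. Since $\mathscr D$ is an additive semigroup by the preceding theorem, $\lambda+\mu \in \mathscr D$ and one may write $a_0 = k\, c_{\lambda+\mu}$ for some $k \in \Bbbk$.

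For $\nu_0 \ne 0$ one has $\deg(\lambda+\mu-\nu_0) = m+m' - \deg(\nu_0) < m+m'$, so $a_{\nu_0} \in \mathscr F'_{m+m'-1}(\widetilde C_{\mathfrak r})$. Hence in the associated graded
\[
gr_{m+m',\,\mathscr F'}(c_\lambda c_\mu) = gr_{m+m',\,\mathscr F'}(a_0) = k \cdot gr_{m+m',\,\mathscr F'}(c_{\lambda+\mu}),
\]
so the lemma reduces to establishing $k \ne 0$, i.e.\ $a_0 \ne 0$.

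This non-vanishing is the main obstacle. My approach is to interpret $c_\lambda c_\mu$ as a matrix coefficient on the tensor product representation: the dual-of-coproduct product on $U(\tilde{\mathfrak p}^-)^*$ yields $(c_{\xi, v} \cdot c_{\eta, w})(u) = (\xi \otimes \eta)\bigl(u.(v \otimes w)\bigr)$ for $u \in U(\tilde{\mathfrak p}^-)$, hence $c_\lambda c_\mu$ is built from the element $(\Phi_{\mathfrak r}^\lambda)^{-1}(c_\lambda) \otimes (\Phi_{\mathfrak r}^\mu)^{-1}(c_\mu)$ of $\widetilde{V''}(\lambda)^* \otimes \widetilde{V'}(\lambda) \otimes \widetilde{V''}(\mu)^* \otimes \widetilde{V'}(\mu)$. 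Using the decompositions of lemma \ref{tensor}, the $a_0$-component corresponds via $\Phi_{\mathfrak r}^{\lambda+\mu}$ to the image of this tensor under the canonical projections
\[
p : \widetilde{V'}(\lambda) \otimes \widetilde{V'}(\mu) \twoheadrightarrow \widetilde{V'}(\lambda+\mu), \qquad q : \widetilde{V''}(\lambda)^* \otimes \widetilde{V''}(\mu)^* \twoheadrightarrow \widetilde{V''}(\lambda+\mu)^*
\]
onto the unique $\widetilde{V'}(\lambda+\mu)$ and $\widetilde{V''}(\lambda+\mu)^*$ summands.

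The vector $\tilde v_{w_0'\lambda} \otimes \tilde v_{w_0'\mu}$ has $\mathfrak r'$-weight $w_0'(\lambda+\mu)$, extremal in $\widetilde{V'}(\lambda) \otimes \widetilde{V'}(\mu)$, so it lies in the unique $\widetilde{V'}(\lambda+\mu)$-summand and $p$ sends it to a non-zero multiple of $\tilde v_{w_0'(\lambda+\mu)}$; symmetrically $q$ sends $\tilde\xi_{w_0\lambda} \otimes \tilde\xi_{w_0\mu}$ to a non-zero multiple of $\tilde\xi_{w_0(\lambda+\mu)}$. Using the explicit form (\ref{descrinvariant}), the ``extra'' summands in $(\Phi_{\mathfrak r}^\lambda)^{-1}(c_\lambda) \otimes (\Phi_{\mathfrak r}^\mu)^{-1}(c_\mu)$, which involve $u_i^\pm \in \mathfrak n^\pm_{\pi'} U(\mathfrak n^\pm_{\pi'})$ of non-zero $\pi'$-weight, project to weight pairs in $\widetilde{V''}(\lambda+\mu)^* \otimes \widetilde{V'}(\lambda+\mu)$ strictly different from the principal pair $(-w_0(\lambda+\mu),\, w_0'(\lambda+\mu))$, and so cannot cancel the principal contribution of $\tilde\xi_{w_0\lambda} \otimes \tilde\xi_{w_0\mu} \otimes \tilde v_{w_0'\lambda} \otimes \tilde v_{w_0'\mu}$. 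Hence the image has non-zero principal coefficient, forcing it to be a non-zero multiple of $(\Phi_{\mathfrak r}^{\lambda+\mu})^{-1}(c_{\lambda+\mu})$ (whose principal coefficient is also non-zero by (\ref{descrinvariant})), and $k \ne 0$.
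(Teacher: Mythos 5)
Your proposal is correct and follows essentially the same route as the paper: identify the graded product with $gr_{m+m',\,\mathscr F'}(c_{\lambda}c_{\mu})$, use the decomposition (\ref{decomp}) together with $U(\mathfrak r')$-invariance and the degree drop for $\nu_0\neq 0$ to isolate the $\widetilde C_{\mathfrak r}(\lambda+\mu)$-component, and show via (\ref{descrinvariant}) that the principal term $c_{\tilde\xi_{w_0\lambda}\otimes\tilde\xi_{w_0\mu},\,\tilde v_{w_0'\lambda}\otimes\tilde v_{w_0'\mu}}$ survives. Your explicit weight-pair argument for why the remaining summands cannot cancel the principal contribution is a welcome expansion of the paper's terse appeal to lemmas \ref{C} and \ref{isomor}.
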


\begin{proof}
By definition of the multiplication in the graded algebra $gr_{\mathscr F'}(\widetilde{C}_{\mathfrak r})$, one has that
$$gr_{m,\,\mathscr F'}(c_{\lambda})gr_{m',\,\mathscr F'}(c_{\mu})=gr_{m+m',\,\mathscr F'}(c_{\lambda}c_{\mu}).$$
Now by equation (\ref{descrinvariant}) in the product $c_{\lambda}c_{\mu}$ appears, up to a nonzero scalar, the term $$c_{\tilde\xi_{w_0\lambda}\otimes\tilde\xi_{w_0\mu},\,\tilde v_{w_0'\lambda}\otimes \tilde v_{w_0'\mu}}=c_{\tilde\xi_{w_0(\lambda+\mu)},\,\tilde v_{w_0'(\lambda+\mu)}}\in \widetilde C_{\mathfrak r}(\lambda+\mu).$$ Indeed this term  cannot be annihilated by the other terms, by lemmas \ref{C} and \ref{isomor}.

Since moreover $c_{\lambda}c_{\mu}\in\widetilde{C}_{\mathfrak r}^{U(\mathfrak r')}$,
equations (\ref{sumC}), (\ref{descrinvariant}) and (\ref{decomp}) imply that

$$\begin{array}{cll} gr_{m+m',\,\mathscr F'}(c_{\lambda}c_{\mu})&=&\sum_{\nu\in\mathscr D,\,deg(\nu)=m+m',\,\nu\in\lambda+\mu-\mathbb N\pi'}gr_{m+m',\,\mathscr F'}(c_{\nu})\\
&=&gr_{m+m',\,\mathscr F'}(c_{\lambda+\mu})\end{array}$$ up to multiplication by a nonzero scalar.
\end{proof}

Now we can conclude the following.
\begin{prop}
The algebra of invariants ${\widetilde C}_{\mathfrak r}^{U(\mathfrak r')}$ is a polynomial algebra over $\Bbbk$, whose number of algebraically independent generators is equal to the cardinality of the set $E(\pi')$.
\end{prop}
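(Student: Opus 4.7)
The plan is to derive the proposition from the lemma just established and from the recalled theorem giving the free-semigroup structure of $\mathscr{D}$.

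First, I would recall that by equation (\ref{sumC}) the family $\{c_\lambda \mid \lambda \in \mathscr{D}\}$ is a $\Bbbk$-basis of $\widetilde{C}_{\mathfrak{r}}^{U(\mathfrak{r}')}$, and that by the recalled theorem, $\mathscr{D}$ is the free additive semigroup on the $|E(\pi')|$ generators $d_\Gamma$, $\Gamma \in E(\pi')$. I then claim that the elements $c_{d_\Gamma}$, $\Gamma \in E(\pi')$, form a set of algebraically independent generators of $\widetilde{C}_{\mathfrak{r}}^{U(\mathfrak{r}')}$, which will conclude the proof.

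For the generation part, I would argue by induction on $m = deg(\lambda)$ that every $c_\lambda$ lies in the subalgebra generated by the $c_{d_\Gamma}$. Writing $\lambda = \sum_{\Gamma} n_\Gamma d_\Gamma$ with $n_\Gamma \in \mathbb{N}$, the additivity of $deg$ on $\mathscr{D}$ together with iteration of the preceding lemma produce a nonzero scalar $\kappa \in \Bbbk^*$ with
\[ \prod_{\Gamma \in E(\pi')} c_{d_\Gamma}^{n_\Gamma} \;=\; \kappa\, c_\lambda \;+\; r, \]
where $r \in \mathscr{F}'_{m-1}(\widetilde{C}_{\mathfrak{r}}) \cap \widetilde{C}_{\mathfrak{r}}^{U(\mathfrak{r}')}$. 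By equation (\ref{sumC}), $r$ is a $\Bbbk$-linear combination of the $c_\mu$ with $\mu \in \mathscr{D}$ and $deg(\mu) < m$; the induction hypothesis then expresses $r$, and thus $c_\lambda$, as a polynomial in the $c_{d_\Gamma}$.

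For algebraic independence, I would pass to the associated graded algebra $gr_{\mathscr{F}'}(\widetilde{C}_{\mathfrak{r}}^{U(\mathfrak{r}')})$. The lemma asserts that the product $gr_{m,\mathscr{F}'}(c_\lambda)\, gr_{m',\mathscr{F}'}(c_\mu)$ is a nonzero scalar multiple of $gr_{m+m',\mathscr{F}'}(c_{\lambda+\mu})$, which, combined with the direct sum decomposition (\ref{sumC}), identifies this graded algebra with the semigroup algebra $\Bbbk[\mathscr{D}]$. Since $\mathscr{D}$ is free on the $d_\Gamma$, this algebra is polynomial on the classes $gr_{\mathscr{F}'}(c_{d_\Gamma})$, $\Gamma \in E(\pi')$. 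Any nontrivial polynomial relation $P(c_{d_\Gamma}) = 0$ in $\widetilde{C}_{\mathfrak{r}}^{U(\mathfrak{r}')}$ would, upon extracting its top $\mathscr{F}'$-graded part, yield a nontrivial polynomial relation among the $gr_{\mathscr{F}'}(c_{d_\Gamma})$, a contradiction. The main, rather minor, point to monitor is that the iterated product of $c_{d_\Gamma}$'s indeed has a nonzero top-$deg$ component, which is exactly the lemma applied $\bigl(\sum_\Gamma n_\Gamma - 1\bigr)$ times, in combination with equation (\ref{decomp}).
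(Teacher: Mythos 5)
Your proposal is correct and follows essentially the same route as the paper: both arguments rest on the preceding lemma, on equation (\ref{sumC}), and on the freeness of $\mathscr D$, passing to the graded algebra $gr_{\mathscr F'}({\widetilde C}_{\mathfrak r}^{U(\mathfrak r')})$ to get polynomiality there. The only difference is that where the paper invokes Bourbaki to transfer polynomiality from the graded algebra back to ${\widetilde C}_{\mathfrak r}^{U(\mathfrak r')}$, you unpack that transfer explicitly (induction on $deg$ for generation, top-graded-part for independence), which is a valid expansion of the same argument.
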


\begin{proof}

It follows as in the proof of  \cite[Thm. 1]{FJ1} (see also  \cite[Prop. 3.1]{FJ1}). Let $\lambda_i,\,i\in I$, be a set of $\mathbb Z$-linearly independent generators of $\mathscr D$ and set $m_i=deg(\lambda_i)$ for all $i\in I$
(one has that $\lvert I\rvert=\lvert E(\pi')\rvert$ by Thm \ref{polynomial}). Denote by $gr_{\mathscr F'}({\widetilde C}_{\mathfrak r}^{U(\mathfrak r')})$ the graded algebra of the algebra ${\widetilde C}_{\mathfrak r}^{U(\mathfrak r')}$ associated with the induced filtration. Note that the above lemma also holds in this graded algebra. Then equation (\ref{sumC}) and  the above lemma imply that $gr_{m_i,\,\mathscr F'}(c_{\lambda_i}), i\in I$, are $\Bbbk$-algebraically independent and generate $gr_{\mathscr F'}({\widetilde C}_{\mathfrak r}^{U(\mathfrak r')})$.  Hence $gr_{\mathscr F'}({\widetilde C}_{\mathfrak r}^{U(\mathfrak r')})$ is a polynomial algebra over $\Bbbk$ in $\lvert E(\pi')\rvert$ generators and it follows (see \cite[Chap. III, \S\, 2, n$^{\circ}$ 9, Prop. 10]{Bou2}) that the algebra ${\widetilde C}_{\mathfrak r}^{U(\mathfrak r')}$ is also a polynomial algebra over $\Bbbk$ in $\lvert E(\pi')\rvert$ generators $c_{\lambda_i}$, $i\in I$, whose $\h$-weight  is equal to $\delta_{i}=w_0'\lambda_i-w_0\lambda_i$ by equation (\ref{weightgenerator}).
\end{proof}

\section{Generalized Kostant filtration and morphism.}\label{Kfm}

\subsection{Generalized Kostant filtration.}\label{Kf}
In \cite[6.1]{FJ2} we have defined what we called the Kostant filtration (denoted by $\mathscr F_K$) on the Hopf dual  of the enveloping algebra of the simple Lie algebra $\g$. 
Here we will consider what we call {\it the generalized Kostant filtration} on the dual algebra $U(\tilde\p^-)^*$ of $U(\tilde\p^-)$. More precisely, we set
\begin{equation}\forall k\in\mathbb N,\,\mathscr F_K^k(U(\tilde\p^-)^*)=\{f\in U(\tilde\p^-)^*\mid f(U_{k-1}(\tilde\p^-))=0\}\label{defKostfiltr}\end{equation}
where $(U_k(\tilde\p^-))_{k\in\mathbb N\cup\{-1\}}$ is the canonical filtration on $U(\tilde\p^-)$, with $U_{-1}(\tilde\p^-)=\{0\}$.

\begin{lm}
The generalized Kostant filtration $(\mathscr F_K^k(U(\tilde\p^-)^*))_{k\in\mathbb N}$ is a decreasing, exhaustive and separated filtration on the algebra $U(\tilde\p^-)^*$. Moreover this filtration is invariant by the left action of $A$ defined by equation (\ref{actiondual}).

\end{lm}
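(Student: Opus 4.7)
The plan is to check the four claimed properties in turn: the inclusion $\mathscr F_K^{k+1}\subseteq\mathscr F_K^k$ (decreasing), the identity $\mathscr F_K^0=U(\tilde\p^-)^*$ (exhaustive), the triviality of $\bigcap_k\mathscr F_K^k$ (separated), the product estimate $\mathscr F_K^k\cdot\mathscr F_K^l\subseteq\mathscr F_K^{k+l}$ (algebra filtration), and finally the invariance under the dual action. The first three are immediate from the definition (\ref{defKostfiltr}): the decreasing inclusion is $U_{k-1}(\tilde\p^-)\subseteq U_k(\tilde\p^-)$; since $U_{-1}(\tilde\p^-)=\{0\}$ every linear form vanishes on it, so $\mathscr F_K^0=U(\tilde\p^-)^*$; and separatedness follows from $U(\tilde\p^-)=\bigcup_{k\ge 0}U_k(\tilde\p^-)$.

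For the algebra property, I would use that the product on $U(\tilde\p^-)^*$ is dual to the coproduct $\Delta$ of the Hopf algebra $U(\tilde\p^-)$. The key point is that $\Delta$ respects the canonical filtration in the form
\begin{equation*}
\Delta(U_n(\tilde\p^-))\subseteq\sum_{i+j=n}U_i(\tilde\p^-)\otimes U_j(\tilde\p^-),
\end{equation*}
which follows by a standard induction from $\Delta(x)=x\otimes 1+1\otimes x$ for $x\in\tilde\p^-$. Then for $f\in\mathscr F_K^k$, $g\in\mathscr F_K^l$ and $u\in U_{k+l-1}(\tilde\p^-)$, writing $\Delta(u)=\sum u_1\otimes u_2$ with bidegree $\le k+l-1$, each summand forces $u_1\in U_{k-1}(\tilde\p^-)$ or $u_2\in U_{l-1}(\tilde\p^-)$, hence $f(u_1)g(u_2)=0$ term by term. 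Thus $(fg)(u)=0$, giving $fg\in\mathscr F_K^{k+l}$.

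For the invariance under the dual action (\ref{actiondual}), since $a.f=f\circ ad^{**}a^{\top}$, it suffices to show that the endomorphism $ad^{**}a$ of $U(\tilde\p^-)$ preserves the canonical filtration $(U_n(\tilde\p^-))_{n\in\mathbb N}$ for every $a\in A$: indeed the antipode $a\mapsto a^{\top}$ clearly maps $A$ to $A$, and then if $f\in\mathscr F_K^k$ and $u\in U_{k-1}(\tilde\p^-)$ one gets $(a.f)(u)=f(ad^{**}a^{\top}(u))=0$ because $ad^{**}a^{\top}(u)\in U_{k-1}(\tilde\p^-)$. By the algebra structure of $A$, it is enough to verify filtration-preservation on the generators $\m$ and $\mathfrak r$. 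For $z\in\mathfrak r$, $ad^{**}z=ad\,z$ is a derivation of $U(\tilde\p^-)$ stabilizing $\tilde\p^-$, hence each $U_n(\tilde\p^-)$. For $x\in\m$, formula (\ref{tildethetaad*}) shows that on the summand $S_k(\m^-)\otimes U_j(\mathfrak r)$ of the decomposition (\ref{directsumbis}) the action $ad^{**}x$ lands in $S_{k-1}(\m^-)U_{j+1}(\mathfrak r)\subseteq U_{k+j}(\tilde\p^-)$; summing over $k+j\le n$ yields $ad^{**}x(U_n(\tilde\p^-))\subseteq U_n(\tilde\p^-)$, as required.

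The only step requiring genuine care is the last one: the generalized adjoint action is not the ordinary adjoint action of a Lie algebra on its enveloping algebra, so filtration-preservation must be checked directly from the definition (\ref{actionA}). Once (\ref{tildethetaad*}) and (\ref{directsumbis}) are in hand, however, this check is routine, and the remainder of the lemma is bookkeeping about dual filtrations.
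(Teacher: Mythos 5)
Your proof is correct and follows essentially the same route as the paper: the first three properties are dispatched as immediate, and the invariance is reduced to checking that $ad^{**}z$ for $z\in\mathfrak r$ and $ad^{**}x$ for $x\in\m$ preserve the canonical filtration, the latter via (\ref{tildethetaad*}) and the decomposition (\ref{directsumbis}), exactly as in the paper. The only difference is that you also verify explicitly, via the coproduct estimate $\Delta(U_n(\tilde\p^-))\subseteq\sum_{i+j=n}U_i(\tilde\p^-)\otimes U_j(\tilde\p^-)$, that the filtration is multiplicative; the paper folds this into ``the first assertions are obvious,'' so your addition is harmless extra detail rather than a divergence.
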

\begin{proof}

The first assertions are obvious.

Let us show the invariance by the left action of $A$.
Let $a\in A$, $k\in\mathbb N$ and $f\in\mathscr F_K^k(U(\tilde\p^-)^*)$.

If $a=z\in\mathfrak r$, then $ad^{**}z=ad\,z$ by equation (\ref{actionAr}) and for all $u\in U_{k-1}(\tilde\p^-)$, $ad\,z(u)\in U_{k-1}(\tilde\p^-)$. Then $z.f\in \mathscr F_K^k(U(\tilde\p^-)^*)$.

Now assume that $a=x\in\m$, and let $u\in U_{k-1}(\tilde\p^-)$. Recall equation (\ref{directsumbis}).
Then $u=\sum_{j=0}^{k-1}s_ju'_j$ with $s_j\in S_j(\m^-)$ and $u'_j\in U_{k-1-j}(\mathfrak r)$, for all $0\le j\le k-1$.

Then by equation (\ref{actionA}) one has :
$$ad^{**}x(u)=\sum_{j=0}^{k-1}\tilde\theta(ad^*x(s_j))u'_j\in\sum_{j=0}^{k-1}S_{j-1}(\m^-)U_{k-j}(\mathfrak r)\subset U_{k-1}(\tilde\p^-)$$
by equation (\ref{tildethetaad*}).

It follows that $x.f(u)=0$ and the lemma.
\end{proof}

\subsection{The graded algebra associated with the generalized Kostant filtration.}\label{gra}

 Set \begin{equation}gr_{K}(U(\tilde\p^-)^*)=\bigoplus_{k\in\mathbb N}gr_K^k(U(\tilde\p^-)^*)\label{gradK}\end{equation}
 where, for all $k\in\mathbb N$, \begin{equation}gr_K^k(U(\tilde\p^-)^*)=\mathscr F_K^k(U(\tilde\p^-)^*)/\mathscr F_K^{k+1}(U(\tilde\p^-)^*).\label{gradKk}\end{equation}

Then $gr_{K}(U(\tilde\p^-)^*)$ is the graded algebra associated with the generalized Kostant filtration $\mathscr F_K$ on the algebra $U(\tilde\p^-)^*$. For all $f\in\mathscr F_K^k(U(\tilde\p^-)^*)$, one denotes by $gr_K^k(f)$ its canonical image in $gr_K^k(U(\tilde\p^-)^*)$.

By lemma \ref{Kf} the dual representation of $A$ on $U(\tilde\p^-)^*$ (given by equation (\ref{actiondual})) induces a left action on $gr_{K}(U(\tilde\p^-)^*)$ defined by
\begin{equation}\forall a\in A,\forall f\in\mathscr F_K^k(U(\tilde\p^-)^*),\,a.gr_K^k(f)=gr_K^k(a.f).\label{actiongradK}\end{equation}

\begin{prop}
Let $x\in\m$ and $f\in\mathscr F_K^k(U(\tilde\p^-)^*)\cap\widetilde{C}_{\mathfrak r}$, $k\in\mathbb N$. Then $x.f\in\mathscr F_K^{k+1}(U(\tilde\p^-)^*)$ and therefore \begin{equation}x.gr_K^k(f)=0\label{annulm}\tag{$\diamond$}\end{equation}
where recall $gr_K^k(g)=g+\mathscr F_K^{k+1}(U(\tilde\p^-)^*)$, for all $g\in\mathscr F_K^k(U(\tilde\p^-)^*)$. 
\end{prop}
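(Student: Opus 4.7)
By the direct-sum decomposition $\widetilde{C}_{\mathfrak r}=\bigoplus_{\lambda}\widetilde{C}_{\mathfrak r}(\lambda)$, which is $\mathfrak r$-isotypic, and the $\mathfrak r$-invariance of the filtration $\mathscr F_K$ from Lemma \ref{Kf}, I would first reduce to the case $f=\sum_p c_{\xi_p,v_p}$ with $\xi_p\in\widetilde{V''}(\lambda)^*$ and $v_p\in\widetilde{V'}(\lambda)$ for a single $\lambda\in P^+(\pi)$. The crucial preliminary observation is that every $\xi=\tilde\xi_{w_0\lambda}.u_0\in\widetilde{V''}(\lambda)^*$ (with $u_0\in U(\mathfrak r)$) annihilates every graded piece $gr_i(V(\lambda))$ for $i\neq k_0$, where $k_0$ is the integer from (\ref{k0}): indeed, $U(\mathfrak r)$ preserves the degree decomposition of $\widetilde{V}(\lambda)$, and the weight-$w_0\lambda$ subspace, on which alone $\tilde\xi_{w_0\lambda}$ can be nonzero, is one-dimensional and lies entirely in $gr_{k_0}$ by (\ref{gr}).

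Next, via the PBW isomorphism $U(\tilde\p^-)\simeq S(\m^-)\otimes U(\mathfrak r)$, writing $u=\sum_j s_j u'_j$ with $s_j\in S_j(\m^-)$ and $u'_j\in U(\mathfrak r)$, the support observation gives $c_{\xi_p,v_p}(u)=\xi_p(s_{k_0}.u'_{k_0}.v_p)$, so the hypothesis $f\in\mathscr F_K^k$ translates to
\[
\sum_p \xi_p(s.u'.v_p)=0\quad\text{for every }s\in S_{k_0}(\m^-),\ u'\in U_{k-1-k_0}(\mathfrak r)
\]
(vacuous if $k\le k_0$). Expanding $(x.f)(u)=-f(ad^{**}x(u))$ by (\ref{actionA}), each summand $\tilde\theta(ad^*x(s_j))u'_j$ lies in $S_{j-1}(\m^-)U_{k-j+1}(\mathfrak r)$ by (\ref{tildethetaad*}), and the support observation forces the only surviving term to be the one with $j-1=k_0$. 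Writing $s_{k_0+1}=y_1\cdots y_{k_0+1}$ with $y_l\in\m^-$, I would obtain $\tilde\theta(ad^*x(s_{k_0+1}))=\sum_i \hat s_i z_i$ in $U(\tilde\p^-)$, where $\hat s_i=y_1\cdots\hat y_i\cdots y_{k_0+1}\in S_{k_0}(\m^-)$ and $z_i=pr_{\mathfrak r}([x,y_i])\in\mathfrak r$, reducing everything to proving $\sum_i f(\hat s_i z_i u'_{k_0+1})=0$ for $u'_{k_0+1}\in U_{k-k_0-1}(\mathfrak r)$.

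For this final step the strategy is to push each $z_i$ past $\hat s_i$ using the identity $\hat s_i z_i=z_i\hat s_i-ad(z_i)(\hat s_i)$, with $ad(z_i)(\hat s_i)\in S_{k_0}(\m^-)$, and past $u'_{k_0+1}$ using $z_i u'_{k_0+1}=u'_{k_0+1}z_i+[z_i,u'_{k_0+1}]$, with $[z_i,u'_{k_0+1}]\in U_{k-1-k_0}(\mathfrak r)$; the correction terms all land in $S_{k_0}(\m^-)\cdot U_{k-1-k_0}(\mathfrak r)\subset U_{k-1}(\tilde\p^-)$ and are killed by $f$ via the translated hypothesis. The \textbf{main obstacle} is that after all these reductions the leading terms $\sum_i \hat s_i u'_{k_0+1}z_i$ still sit in $S_{k_0}(\m^-)U_{k-k_0}(\mathfrak r)$, one $U(\mathfrak r)$-degree above the range the hypothesis directly controls, so the final cancellation cannot be established one $i$ at a time: it requires exploiting the global Leibniz structure of $\tilde\theta\circ ad^*x$ on $S_{k_0+1}(\m^-)$ together with the fact $\rho_\lambda(\m).\widetilde{V'}(\lambda)=0$ from (\ref{anulm}), which bridges the $ad^{**}$-action on $U(\tilde\p^-)$ with the natural $\p$-action on $\widetilde{V}(\lambda)$ via $\beta_\lambda$; carefully tracking the PBW degrees through iterated commutator rewrites is the key technical bookkeeping.
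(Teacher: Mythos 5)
Your reductions are sound and in fact track the paper's own setup quite closely: the observation that every $\xi\in\widetilde{V''}(\lambda)^*$ vanishes on $gr_i(V(\lambda))$ for $i\neq k_0$, the resulting translation of the hypothesis $f(U_{k-1}(\tilde\p^-))=0$ into a condition on $S_{k_0}(\m^-)U_{k-1-k_0}(\mathfrak r)$ only, and the isolation of the single surviving term $f(\tilde\theta(ad^*x(s))u')$ with $s\in S_{k_0+1}(\m^-)$ and $u'\in U_{k-k_0-1}(\mathfrak r)$ are all correct. (Two caveats: $k_0$ depends on $\lambda$, and your preliminary reduction to a single $\lambda$ is not actually justified --- the $\widetilde{C}_{\mathfrak r}(\lambda)$ are isotypic for the \emph{coregular right} action of $U(\mathfrak r)$, whereas $\mathscr F_K$ is invariant for the \emph{coadjoint} action, under which $\widetilde{C}_{\mathfrak r}(\lambda)\simeq\widetilde{V''}(\lambda)^*\otimes\widetilde{V'}(\lambda)$ splits into many constituents that different $\lambda$ may share. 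The paper simply carries the sum over $\lambda$ throughout, which costs nothing.) The real problem is that your proof stops exactly where the proposition begins: the identity $\sum_i f(\hat s_i z_i u')=0$ is asserted to follow from ``commutator bookkeeping'' together with $\rho_\lambda(\m).\widetilde{V'}(\lambda)=0$, and you yourself point out that the leading terms sit in $S_{k_0}(\m^-)U_{k-k_0}(\mathfrak r)$, one $U(\mathfrak r)$-degree beyond what the translated hypothesis controls. That is not a bookkeeping nuisance; it is the entire content of the statement. Pushing $z_i$ to the left gives $f(z_i\hat s_iu')=\sum_p c_{\xi_p.z_i,\,v_p}(\hat s_iu')$, and the functionals $\sum_p c_{\xi_p.z_i,\,v_p}$ have no reason to lie in $\mathscr F_K^k(U(\tilde\p^-)^*)$, so the hypothesis again gives no control; pushing $z_i$ to the right past $u'$ leaves you in $U_{k-k_0}(\mathfrak r)$ as you note. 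No amount of iterated commutator rewriting inside $U(\tilde\p^-)$ changes the total degree by more than lower-order corrections.

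What actually closes the gap in the paper is a different mechanism, absent from your proposal. One first proves an auxiliary vanishing lemma: for $u\in U_{j-1}(\g)$ and $u'\in U_{k-j}(\mathfrak r)$ one has $\sum_{\lambda,i}\xi_{w_0\lambda}(u'_i\,u\,u'\,u''_i.v_\lambda)=0$; this is the correct ``lower-order terms die'' statement, and already requires lifting from $\widetilde{V}(\lambda)$ back to $V(\lambda)$ via $\beta_\lambda$ and the identity $su'.\tilde v_{\lambda}=gr_k(\theta(s)u'.v_{\lambda})$. One then replaces $\tilde\theta(ad^*x(s))$ by the honest adjoint action $ad\,x(\theta(s))=x\theta(s)-\theta(s)x$ in $U(\g)$, which visibly kills $\sum\xi_{w_0\lambda}(u'_i(\cdot)u'u''_i.v_\lambda)$ because $\m.V'(\lambda)=0$ and $V''(\lambda)^*.\m=0$. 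The price is the discrepancy $ad\,x(s)-ad^*x(s)=ad_{\m^-}x(s)+ad_{\m}x(s)$: the $\m$-component is disposed of by the auxiliary lemma and $\m.V'(\lambda)=0$, but the $\m^-$-component is the genuinely hard case --- $\theta(ad_{\m^-}x(s))$ lies in $U^{k_0+1}(\m^-)$ while $v_{w_0\lambda}\in U^{k_0}(\m^-).V'(\lambda)$, and the paper exploits this mismatch of $\m^-$-degrees through an explicit description of the annihilator of $u'u''_i.v_\lambda$ by powers of root vectors and a Poincar\'e--Birkhoff--Witt comparison, to force each such term to vanish. Neither the comparison with the full adjoint action, nor the splitting of $[x,y_t]$ into its $\mathfrak r$-, $\m$- and $\m^-$-components, nor the annihilator/degree argument appears in your write-up, so the proof as proposed has a genuine gap at its central step.
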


\begin{proof}

Take $$f=\sum_{\lambda\in\Lambda}\sum_{i\in I_{\lambda}}c^{\lambda}_{\tilde\xi_{w_0\lambda}.u'_i,\,u''_i.\tilde v_{\lambda}}\in\mathscr F_K^k(U(\tilde\p^-)^*)\cap\widetilde{C}_{\mathfrak r}$$
with $\Lambda\subset P^+(\pi)$ a finite set and for all $\lambda\in\Lambda$, $I_{\lambda}$ a finite set, $u'_i,\,u''_i\in U(\n^-_{\pi'})$, for all $i\in I_{\lambda}$. Moreover one may assume, if $f\neq 0$, that $u'_i,\,u''_i$, for all $i\in I_{\lambda}$, are nonzero weight vectors.
We need the lemma below.

\begin{lm}
Let $k\in\mathbb N$ and $0\le j\le k$.
With the above hypotheses, we have that
\begin{equation}\forall u\in U_{j-1}(\g),\,\forall u'\in U_{k-j}(\mathfrak r),\,\,\sum_{\lambda\in\Lambda}\sum_{i\in I_{\lambda}}\xi_{w_0\lambda}(u'_iuu'u''_i. v_{\lambda})=0. \label{nul}\end{equation}

\end{lm}

\begin{proof}
The lemma is obvious for $j=0$ since $U_{-1}(\g)=\{0\}$. Assume that $j\in\mathbb N^*$.
Take  $u\in U_{j-1}(\g)$ and $u'\in U_{k-j}(\mathfrak r)$.
Since $\m.V'(\lambda)=\{0\}$ and since $U_{j-1}(\g)=U_{j-1}(\p^-)\oplus U_{j-2}(\g)\m$, one may assume that $u\in U_{j-1}(\p^-)$. One also may assume  that $u$ and $u'$ are nonzero weight vectors.

Since $\tilde\xi_{w_0\lambda}$ vanishes on the weight vectors  $\beta_{\lambda}^{-1}(u'_iuu'u''_i .v_{\lambda})$ which are not of weight $w_0\lambda$ and by equation (\ref{gr}), one has that
\begin{align*}\sum_{\lambda\in\Lambda}\sum_{i\in I_{\lambda}}\xi_{w_0\lambda}(u'_iuu'u''_i .v_{\lambda})&=
\sum_{\lambda\in\Lambda}\sum_{i\in I_{\lambda}}\tilde\xi_{w_0\lambda}(\beta_{\lambda}^{-1}(u'_iuu'u''_i .v_{\lambda}))\\
&=\sum_{\lambda\in\Lambda}\sum_{i\in I'_{\lambda}}\tilde\xi_{w_0\lambda}(gr_{k_0}(u'_iuu'u''_i. v_{\lambda}))\\
\end{align*}
where for all $\lambda\in\Lambda$, $I'_{\lambda}\subset I_{\lambda}$ is the set of indices $i\in I_{\lambda}$ such that $\beta_{\lambda}^{-1}(u'_iuu'u''_i .v_{\lambda})$ is a vector of weight $w_0\lambda$.

Now write $u=\sum_{t=0}^{j-1}u_tv_t$ with $u_t=\theta(s_t)\in U^t(\m^-)=\theta(S_t(\m^-))$, $s_t\in S_t(\m^-)$ and $v_t\in U_{j-1-t}(\mathfrak r)$ for all $0\le t\le j-1$.

Then \begin{equation*}gr_{k_0}(u'_iuu'u''_i.v_{\lambda})=u'_i\Bigl(\sum_{t=0}^{j-1}s_tv_t\Bigr)u'u''_i.\tilde v_{\lambda}\end{equation*} by equation (\ref{first}).

But $\sum_{t=0}^{j-1}s_tv_t\in U_{j-1}(\tilde\p^-)$ and recall that $u'\in U_{k-j}(\mathfrak r)$. Then \begin{equation*}\Bigl(\sum_{t=0}^{j-1}s_tv_t\Bigr)u'\in U_{k-1}(\tilde\p^-)\end{equation*}
and we obtain the required equation (\ref{nul}) since $f(U_{k-1}(\tilde\p^-))=0$.
\end{proof}




Fix $x\in\m$ being a nonzero weight vector and for all $0\le j\le k$, take $s\in S_j(\m^-)$ and  $u'\in U_{k-j}(\mathfrak r)$ being weight vectors.
If $j\ge 1$, one may assume   that
$s=y_1\cdots y_j\in S_j(\m^-)$ with $y_t\in\m^-$ being a weight vector for all $1\le t\le j$.  

Recall that \begin{equation}ad^*x(s)=\sum_{1\le t\le j\mid [x,\,y_t]\in\mathfrak r}y_1\cdots y_{t-1}[x,\,y_t]y_{t+1}\cdots y_j\in S_j(\p^-).\end{equation}
Set
\begin{equation}ad_{\m^-}x(s)=\sum_{1\le t\le j\mid [x,\,y_t]\in\mathfrak \m^-}y_1\cdots y_{t-1}[x,\,y_t]y_{t+1}\cdots y_j\in S_j(\m^-)\end{equation}
and
\begin{equation}ad_{\m}x(s)=\sum_{1\le t\le j\mid[x,\,y_t]\in\m}y_1\cdots y_{t-1}[x,\,y_t]y_{t+1}\cdots y_j\in S_j(\g).\end{equation}
By equations (\ref{actiondual}) and (\ref{actionA}),  one has that \begin{equation*}-x.f(su')=f(ad^{**}x(su'))=f(\tilde\theta(ad^*x(s))u').\end{equation*} Then if $j=0$ one has obviously that $x.f(su')=0$, by equation (\ref{actionnul}).

From now on, assume that $j\ge 1$. 
By the above and by what we said in the proof of the previous lemma we have that
\begin{equation}-x.f(su')
=\sum_{\lambda\in\Lambda}\sum_{i\in I'_{\lambda}}\tilde\xi_{w_0\lambda}\Bigr(u'_i\tilde\theta(ad^*x(s))u'u''_i.\tilde v_{\lambda}\Bigr)\label{eqq}\end{equation}
where for all $\lambda\in\Lambda$, $I'_{\lambda}=\{i\in I_{\lambda}\mid\exists c_i\in\Bbbk^*;\; u'_i\tilde\theta(ad^*x(s))u'u''_i.\tilde v_{\lambda}=c_i\tilde v_{w_0\lambda}\}$.

Fix $\lambda\in\Lambda$ and $i\in I'_{\lambda}$.  One has that $u'_i\tilde\theta(ad^*x(s))u'u''_i.\tilde v_{\lambda}\in gr_{j-1}(V(\lambda))$ by equations (\ref{tildethetaad*}) and (\ref{third}),
and  by equation (\ref{gr}) we have that $j-1=k_0$. 

Consider $\theta:S(\g)\longrightarrow U(\g)$ the symmetrization (which is an isomorphism of $ad\,U(\g)$-modules)
and the adjoint action (denoted by $ad$) of $\m$ on $S(\g)$ which extends uniquely by derivation the adjoint action of $\m$ on $\g$ given by Lie bracket.
Observe that 
\begin{equation}ad\,x(s)=ad^*x(s)+ad_{\m^-}x(s)+ad_{\m}x(s)\label{sum}.\end{equation}

Moreover for all $1\le t\le j$, one has that
\begin{align}\theta(y_1\cdots y_{t-1}[x,\,y_t]y_{t+1}\cdots y_j)=\notag\\
\theta(y_1\cdots y_{t-1}y_{t+1}\cdots y_j)[x,\,y_t]\mod U_{j-1}(\g).\label{annull}\end{align}

By equations (\ref{eqq}), (\ref{annull}) and the previous lemma, it follows that
\begin{equation}-x.f(su')
=\sum_{\lambda\in\Lambda}\sum_{i\in I'_{\lambda}}\xi_{w_0\lambda}\Bigr(u'_i\theta(ad^*x(s))u'u''_i.v_{\lambda}\Bigr)\end{equation}

By equation (\ref{annull}) and the previous lemma, and since $\m.V'(\lambda)=\{0\}$  one has that
\begin{equation}\sum_{\lambda\in\Lambda}\sum_{i\in I'_{\lambda}}\xi_{w_0\lambda}(u'_i\theta(ad_{\m}x(s))u'u''_i.v_{\lambda})=0.\end{equation}

We claim that
\begin{equation}-x.f(su')=\sum_{\lambda\in\Lambda}\sum_{i\in I'_{\lambda}}\xi_{w_0\lambda}\Bigl(u'_i\theta(ad\,x(s))u'u''_i.v_{\lambda}\Bigr).\label{expressionf}
\end{equation}

By equation (\ref{sum}) it remains to show that 
\begin{equation}\sum_{\lambda\in\Lambda}\sum_{i\in I'_{\lambda}}\xi_{w_0\lambda}(u'_i\theta(ad_{\m^-}x(s))u'u''_i.v_{\lambda})=0\label{req}.\end{equation}


Fix an index $t$ with $[x,\,y_t]\in\m^-$. Take $\lambda\in\Lambda$ and $i\in I'_{\lambda}$ (if there exist) such that
 $$\xi_{w_0\lambda}(u'_i\theta(y_1\cdots y_{t-1}[x,\,y_t]y_{t+1}\cdots y_j)u'u''_i.v_{\lambda})\neq 0.$$ 

Since all weight vectors non vanishing on $\xi_{w_0\lambda}$ are proportional to $v_{w_0\lambda}$,  one has that
$u'_i\theta(y_1\cdots y_{t-1}[x,\,y_t]y_{t+1}\cdots y_j)u'u''_i.v_{\lambda}$ is proportional to $v_{w_0\lambda}$. 

On the other hand, one knows that $v_{w_0\lambda}\in\mathscr F^{k_0}(V(\lambda))\subset U^{k_0}(\m^-).V'(\lambda)$ by $(iv)$ of Lemma \ref{descr} and that $k_0=j-1$ (otherwise $I'_{\lambda}=\emptyset$). Then by the irreducibility of the $U(\mathfrak r)$-module $V'(\lambda)$,  there exists a nonzero weight vector $u'_{i,\,t}\in U^{j-1}(\m^-)U(\n^-_{\pi'})U(\n_{\pi'})$ such that \begin{equation}(u'_i\theta(y_1\cdots y_{t-1}[x,\,y_t]y_{t+1}\cdots y_j)-u'_{i,\,t}).(u'u''_i.v_{\lambda})=0.\end{equation}

Set $u_t=\theta(y_1\cdots y_{t-1}[x,\,y_t]y_{t+1}\cdots y_j)$. Then $u_{t}\in U^j(\m^-)$ is such that
\begin{equation}u'_iu_{t}-u'_{i,\,t}\in {\rm Ann}_{U(\m^-)U(\n^-_{\pi'})U(\n_{\pi'})}(u'u''_i.v_{\lambda})\end{equation}

For all $\gamma\in\Delta^+\setminus\Delta^+_{\pi'}$, denote by $r_{\gamma,\,i}$ the smallest positive integer such that $x_{-\gamma}^{r_{\gamma,\,i}}.(u'u''_i.v_{\lambda})=0$. If we denote by $\mu_i$ the weight of the vector $u'u''_i.v_{\lambda}$, we have that $r_{\gamma,\,i}=\langle\gamma\,\check\null,\,\mu_i\rangle+1$, since $x_{\gamma}.(u'u''_i.v_{\lambda})=0$.

Similarly for all $\beta\in\Delta^+_{\pi'}$ denote by $r_{\beta,\,i}^\pm$ the smallest positive integer such that $x_{\pm\beta}^{r^\pm_{\beta,\,i}}.(u'u''_i.v_{\lambda})=0$ (see \cite[21]{H}).

Then one has that
\begin{align}u'_iu_{t}-u'_{i,\,t}&\in\sum_{\gamma\in\Delta^+\setminus\Delta^+_{\pi'}}U(\m^-)U(\n^-_{\pi'})U(\n_{\pi'})x_{-\gamma}^{r_{\gamma,\,i}}\label{form}\\
\notag&+\sum_{\beta\in\Delta^+_{\pi'}}U(\m^-)U(\n^-_{\pi'})U(\n_{\pi'})x_{\pm\beta}^{r^\pm_{\beta,\,i}}.\end{align}

By the Poincar\'e-Birkhoff-Witt theorem (\cite[2.1.11]{D})  setting  $\Delta^+\setminus\Delta^+_{\pi'}=\{\gamma_1,\,\ldots,\,\gamma_r\}$, we have that
\begin{equation} u_t=\sum_{\vec{\nu}}c_{\vec{\nu}}\,x_{-\gamma_1}^{\nu_1}\cdots x_{-\gamma_r}^{\nu_r}\label{PB}\end{equation}
where the sum over the $\vec{\nu}=(\nu_1,\,\ldots,\,\nu_r)\in\mathbb N^r$  is finite and with, for all $\vec{\nu}$, $c_{\vec{\nu}}\in\Bbbk$.

One has $U(\mathfrak r)U^j(\m^-)=U^j(\m^-)U(\mathfrak r)$ and $U^j(\m^-)U(\mathfrak r)\cap U^{j-1}(\m^-)U(\mathfrak r)=\{0\}$. 

Comparing equations (\ref{form}) and (\ref{PB}) one deduces that 
 there exists $w_{\gamma,\,i}\in U(\m^-)$, for all $\gamma\in\Delta^+\setminus\Delta^+_{\pi'}$, and that there exists  $w_{\pm\beta,\,i}\in U^j(\m^-)U(\n^-_{\pi'})U(\n_{\pi'})$, for all $\beta\in\Delta^+_{\pi'}$, such that
 \begin{equation}u'_iu_{t}=\sum_{\gamma\in\Delta^+\setminus\Delta^+_{\pi'}}u'_iw_{\gamma,\,i}x_{-\gamma}^{r_{\gamma,\,i}}+\sum_{\beta\in\Delta^+_{\pi'}}w_{\pm\beta,\,i}x_{\pm\beta}^{r_{\beta,\,i}^\pm}.\end{equation}

 Then one has that
\begin{equation}\sum_{\lambda\in\Lambda}\sum_{i\in I'_{\lambda}} \xi_{w_0\lambda}(u'_i u_tu'u''_i.v_{\lambda})=0\end{equation}
and
\begin{align*}\sum_{\lambda\in\Lambda}\sum_{i\in I'_{\lambda}}\xi_{w_0\lambda}(u'_i\theta(ad_{\m^-}x(s))u'u''_i.v_{\lambda})=&\\
\sum_{1\le t\le j\mid [x,\,y_t]\in\m^-}\sum_{\lambda\in\Lambda}\sum_{i\in I'_{\lambda}} \xi_{w_0\lambda}(u'_i u_tu'u''_i.v_{\lambda})=0\end{align*}
which is the required equation (\ref{req}). Hence we obtain equation (\ref{expressionf}) and therefore, since $\theta$ is a morphism of $ad\,U(\g)$-modules,
\begin{align*}-x.f(su')&=\sum_{\lambda\in\Lambda}\sum_{i\in I'_{\lambda}}\xi_{w_0\lambda}\Bigl(u'_iad\,x(\theta(s))u'u''_i.v_{\lambda}\Bigr)\\
&=\sum_{\lambda\in\Lambda}\sum_{i\in I'_{\lambda}}\xi_{w_0\lambda}\Bigl(u'_i(x\theta(s)-\theta(s)x)u'u''_i.v_{\lambda}\Bigr)\\
&=0\end{align*}
since moreover $\m.V'(\lambda)=\{0\}$ and $V''(\lambda)^*.\m=\{0\}$.
\end{proof}

\subsection{The dual representation of $U(\tilde\p)$ in $S(\p^-)^*$.}\label{dualS}

Recall subsection \ref{coadjointaction} and in particular the representation, denoted by $ad^*$, of $U(\tilde\p)$ in $S(\p^-)$ (see lemma \ref{coadjointaction}) and also in every $S_k(\p^-)$ ($k\in\mathbb N$).
We then can endow $S_k(\p^-)^*$ with the dual representation of $U(\tilde\p)$ given by
\begin{equation}\forall u\in U(\tilde\p),\,\forall f\in S_k(\p^-)^*,\;u.f=f\circ ad^* u^{\top},\label{actiondualbis}\end{equation} where $u^{\top}$ denotes the image of $u$ by the antipode defined similarly as in equation (\ref{antipode}). 

We have the following.
\begin{lm}
Let $k\in\mathbb N$. Then the $U(\tilde\p)$-module $S_k(\p^-)^*$ is isomorphic to the $U(\tilde\p)$-module $S_k(\tilde\p)=S_k(\p)$ when the latter is endowed with the adjoint action of $\tilde\p$ which extends by derivation the Lie bracket in $\tilde\p$.

\end{lm}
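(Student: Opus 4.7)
The strategy is to build the isomorphism as the composition $S_k(\p^-)^* \isomto S_k(\tilde\p^*)^* \isomto S_k(\tilde\p)^{**} \isomto S_k(\tilde\p)$, upgrading the Killing-form duality to symmetric powers and then applying the standard polarization identification.

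First, I would check that the Killing form $K$ gives an isomorphism of $U(\tilde\p)$-modules $\p^- \isomto \tilde\p^*$, where $\p^-$ carries the action $ad^*$ of subsection \ref{coadjointaction} and $\tilde\p^*$ carries the coadjoint representation dual to the adjoint representation of $\tilde\p$ on itself (via the bracket $[\,,\,]_{\tilde\p}$). Sending $y \in \p^-$ to $\phi_y : z \mapsto K(y,z)$, I need to verify $(x.\phi_y)(z) = -\phi_y([x,z]_{\tilde\p}) = \phi_{ad^*x(y)}(z)$ for all $x \in \tilde\p$, $z \in \tilde\p = \p$. For $x \in \mathfrak r$ this is immediate from the $\g$-invariance $K([x,y],z)+K(y,[x,z])=0$, together with the formula $ad^*x(y)=[x,y]$. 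For $x \in \m$ one splits $z = z_{\mathfrak r} + z_{\m}$ and $y = y_{\mathfrak r} + y_{\m^-}$; using $[x,z_{\m}]_{\tilde\p} = 0$ (the key input from equation (\ref{brackettildep})) together with the fact that $K$ pairs $\mathfrak r$ only with $\mathfrak r$ and $\m$ only with $\m^-$, the computation reduces to $-K(y_-,[x,z_{\mathfrak r}]) = K([x,y_-],z_{\mathfrak r}) = K(pr_{\mathfrak r}([x,y]),z_{\mathfrak r})$, which is precisely $\phi_{ad^*x(y)}(z)$. Thus the projection $pr_{\mathfrak r}$ in the definition of $ad^*$ is forced by the asymmetric pairing behaviour of $K$.

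Next, since both sides of this isomorphism are extended by derivation from degree one to the whole symmetric algebra (by Lemma \ref{coadjointaction} on the $\p^-$ side and by construction on the $\tilde\p^*$ side), the identification lifts to an isomorphism of $U(\tilde\p)$-modules
\begin{equation*}
S_k(\p^-) \isomto S_k(\tilde\p^*).
\end{equation*}
Dualising yields $S_k(\p^-)^* \isomto S_k(\tilde\p^*)^*$ as $U(\tilde\p)$-modules, where both carry the dual of the derivation-extended action.

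The final step uses the standard polarization isomorphism: for any finite-dimensional vector space $V$ in characteristic zero, the map
\begin{equation*}
S_k(V^*) \longrightarrow S_k(V)^*, \quad \phi_1\cdots\phi_k \longmapsto \Bigl(v_1\cdots v_k \mapsto \tfrac{1}{k!}\sum_{\sigma\in\mathfrak S_k}\phi_1(v_{\sigma(1)})\cdots\phi_k(v_{\sigma(k)})\Bigr)
\end{equation*}
is a natural vector space isomorphism, and being natural in $V$ it intertwines the actions of any Lie algebra acting on $V$ (where $S_k(V^*)$ and $S_k(V)^*$ both inherit derivation-extended actions). Applied with $V = \tilde\p$ equipped with the adjoint representation, this gives $S_k(\tilde\p^*) \isomto S_k(\tilde\p)^*$ as $U(\tilde\p)$-modules. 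Combining with the previous step and using the reflexivity $S_k(\tilde\p)^{**} \simeq S_k(\tilde\p)$ (finite dimensions) produces the desired isomorphism $S_k(\p^-)^* \simeq S_k(\tilde\p) = S_k(\p)$ as $U(\tilde\p)$-modules, the latter endowed with the adjoint action.

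The only non-routine point is the case check in the first step for $x \in \m$: one must see that the asymmetry between the abelian bracket $[x,z_{\m}]_{\tilde\p}=0$ and the pairing asymmetries of $K$ conspire to produce exactly the projection $pr_{\mathfrak r}$ appearing in (\ref{eg}). Everything else is functoriality of symmetric powers and derivation-extension, and the argument works uniformly in $k$.
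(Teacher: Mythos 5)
Your proof is correct and follows essentially the same route as the paper: both rest on the degree-one Killing-form identification $\p^-\isomto\tilde\p^*$, verified by the same case analysis in which the projection $pr_{\mathfrak r}$ emerges from $[\m,\m]_{\tilde\p}=0$ together with the pairing properties of $K$. The only difference is that the paper extends to degree $k$ by an explicit computation with the polarized form $K_k$ (a double sum over permutations), whereas you obtain the same map $f_k$ by functoriality of symmetric powers, dualization and the natural isomorphism $S_k(V^*)\simeq S_k(V)^*$ --- a presentational rather than substantive difference.
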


\begin{proof}

For $k=0$, the assertion is obvious. Recall (subsection \ref{genotation}) that we denote by $K$ the Killing form on $\g\times\g$. Then the vector space $\tilde\p\simeq\p$ is isomorphic to the dual space $(\p^-)^*$ through the map $$f:x\in\tilde\p\mapsto K(x,\,-)_{\mid\p^-}.$$ When $(\p^-)^*$ is endowed with the action of $U(\tilde\p)$ given by equation (\ref{actiondualbis}) and $\tilde\p$ with the adjoint action of $\tilde\p$,  the map $f$ is an isomorphism of $U(\tilde\p)$-modules. 

Indeed assume firstly that $x'\in\m$. Then for all $x\in\tilde\p$ and for all $y\in \p^-$, $x'.f(x)(y)=-K(x,\,pr_{\mathfrak r}([x',\,y]))$ by equations (\ref{eg}) and (\ref{actiondualbis}). If moreover $x\in\m$, then $$K(x,\,pr_{\mathfrak r}([x',\,y]))=0.$$ But one also has $[x',\,x]_{\tilde\p}=0$ by equation (\ref{brackettildep}).  Then $$x'.f(x)=f([x',\,x]_{\tilde\p})$$ in this case. If $x\in\mathfrak r$, then $$x'.f(x)(y)=-K(x,\,pr_{\mathfrak r}([x',\,y]))=-K(x,\,[x',\,y])$$ since $[x',\,y]=pr_{\mathfrak r}([x',\,y])+pr_{\m}([x',\,y])+pr_{\m^-}([x',\,y])$. Then by the invariance of the Killing form, we obtain that $$x'.f(x)(y)=K([x',\,x],\,y)=f([x',\,x]_{\tilde\p})(y)$$  by equation (\ref{brackettildep}).
Now if $x'\in\mathfrak r$, then the assertion follows immediatly from the invariance of the Killing form and equation (\ref{eg2}). This proves the lemma for $k=1$.

Let now $k\in\mathbb N^*$. Consider the map $f_k:S_k(\tilde\p)\longrightarrow S_k(\p^-)^*$ defined by $$f_k(x_1\cdots x_k)=K_k(x_1\cdots x_k,\,-)_{\mid S_k(\p^-)}$$ for all $x_1,\,\ldots,\,x_k\in\tilde\p$  where $K_k$ is defined as in \cite[2.7]{FJ3}, namely :
for $y_1,\,\ldots,\,y_k\in\p^-$, $$K_k(x_1\cdots x_k,\,y_1\cdots y_k)=\frac{1}{k!}\sum_{\sigma\in \mathfrak{S}_k}\prod_{i=1}^kK(x_i,\,y_{\sigma(i)})$$ where we denote by $\mathfrak{S}_k$ the group of permutations of $k$ elements.

By \cite[2.7]{FJ3} we have that $f_k$ is an isomorphism of vector spaces. It remains to show that $f_k$ is an isomorphism of $U(\tilde\p)$-modules.

Let $x_1,\,\ldots,\,x_k\in\tilde\p$, $y_1,\,\ldots,\,y_k\in\p^-$ and $x\in\tilde\p$. Then one has
\begin{align*}
x.f_k(x_1\cdots x_k)(y_1\cdots y_k)&=-K_k(x_1\cdots x_k,\,ad^*x(y_1\cdots y_k))\\
&=-\sum_{i=1}^kK_k(x_1\cdots x_k,\,y_1\cdots y_{i-1}ad^*x(y_i) y_{i+1}\cdots y_k)\\
&=-\frac{1}{k!}\sum_{i=1}^k\sum_{\sigma\in\mathfrak{S}_k}\prod_{t\neq\sigma^{-1}(i)}K(x_t,\,y_{\sigma(t)})K(x_{\sigma^{-1}(i)},\,ad^*x(y_i))\\
&=-\frac{1}{k!}\sum_{i=1}^k\sum_{\sigma\in\mathfrak{S}_k}\prod_{t\neq i}K(x_t,\,y_{\sigma(t)})K(x_{i},\,ad^*x(y_{\sigma(i)}))\\
\end{align*}

On the other hand, one has

\begin{align*}
f_k(ad_{\tilde\p}x(x_1\cdots x_k))(y_1\cdots y_k)
&=\sum_{i=1}^kf_k(x_1\cdots x_{i-1}[x,\,x_i]_{\tilde\p}x_{i+1}\cdots x_k)(y_1\cdots y_k)\\
&=\sum_{i=1}^k\frac{1}{k!}\sum_{\sigma\in\mathfrak S_k}\prod_{t\neq i}K(x_t,\,y_{\sigma(t)})K([x,\,x_i]_{\tilde\p},\,y_{\sigma(i)})\\
\end{align*}

By the case $k=1$ one obtains that, for all $1\le i\le k$, and all $\sigma\in\mathfrak{S}_k$,
$$K([x,\,x_i]_{\tilde\p},\,y_{\sigma(i)}))=-K(x_i,\,ad^*x(y_{\sigma(i)})).$$
This completes the lemma by the above.
\end{proof}

\subsection{Kostant morphism.}\label{Km}

 Recall subsection \ref{partialsym} and let $k\in\mathbb N$.

We define
$$\psi_k:\mathscr F_K^k(U(\tilde\p^-)^*)\longrightarrow S_k(\p^-)^*$$
by the following. For all $f\in\mathscr F_K^k(U(\tilde\p^-)^*)$, we set :
\begin{equation}\forall j\in\mathbb N,\; 0\le j\le k,\,\forall s\in S_j(\m^-),\,\forall s'\in S_{k-j}(\mathfrak r),\;\psi_k(f)(ss')= f(s\,\theta(s'))\label{psi}\end{equation}
that we extend by linearity, so that $\psi_k$ is a linear map.
As in \cite[6.2]{FJ2} we call $\psi_k$ the Kostant map.

\begin{prop}
Let $k\in\mathbb N$.
The kernel of the linear map $\psi_k$ is equal to $\mathscr F_K^{k+1}(U(\tilde\p^-)^*)$. Moreover $\psi_k$ is onto.
\end{prop}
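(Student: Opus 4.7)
My plan rests on two compatible direct sum decompositions. On the enveloping side, combining the PBW isomorphism $U(\tilde\p^-) \simeq S(\m^-) \otimes U(\mathfrak r)$ (via multiplication, using that $\m^-$ is abelian in $\tilde\p^-$, so $S(\m^-) = U(\m^-)$ inside $U(\tilde\p^-)$) with the symmetrization decomposition $U(\mathfrak r) = \bigoplus_{i \in \mathbb N} U^i(\mathfrak r) = \bigoplus_{i \in \mathbb N} \theta(S_i(\mathfrak r))$ from \cite[2.4.4, 2.4.5]{D} yields
\[U(\tilde\p^-) = \bigoplus_{(j,i) \in \mathbb N^2} S_j(\m^-) \cdot \theta(S_i(\mathfrak r)), \qquad U_k(\tilde\p^-) = \bigoplus_{j + i \le k} S_j(\m^-) \cdot \theta(S_i(\mathfrak r)).\]
On the symmetric side, $\p^- = \m^- \oplus \mathfrak r$ gives $S_k(\p^-) = \bigoplus_{j + i = k} S_j(\m^-) \cdot S_i(\mathfrak r)$, which is exactly the decomposition used to define $\psi_k$ in (\ref{psi}).

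For the kernel equality, the inclusion $\mathscr F_K^{k+1}(U(\tilde\p^-)^*) \subset \ker \psi_k$ is immediate: for $s \in S_j(\m^-)$ and $s' \in S_{k-j}(\mathfrak r)$, the element $s\,\theta(s')$ lies in $U_k(\tilde\p^-)$ by the second decomposition above, so $\psi_k(f)(ss') = f(s\,\theta(s')) = 0$ whenever $f$ vanishes on $U_k(\tilde\p^-)$. Conversely, let $f \in \ker \psi_k$. To show $f \in \mathscr F_K^{k+1}$, it is enough, thanks to the direct sum decomposition of $U_k(\tilde\p^-)$, to show that $f$ annihilates every summand $S_j(\m^-) \cdot \theta(S_i(\mathfrak r))$ with $j + i \le k$. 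If $j + i < k$, then $s\,\theta(s') \in U_{k-1}(\tilde\p^-)$, so $f(s\,\theta(s')) = 0$ because $f \in \mathscr F_K^k$; if $j + i = k$, then $f(s\,\theta(s')) = \psi_k(f)(ss') = 0$ by the assumption on $f$. This gives $\ker \psi_k = \mathscr F_K^{k+1}(U(\tilde\p^-)^*)$.

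For surjectivity, given any $g \in S_k(\p^-)^*$ I would construct a preimage by specifying a linear functional $f$ componentwise on the direct sum decomposition of $U(\tilde\p^-)$: set
\[f(s\,\theta(s')) = \begin{cases} g(ss') & \text{if } s \in S_j(\m^-),\ s' \in S_{k-j}(\mathfrak r), \\ 0 & \text{if } s \in S_j(\m^-),\ s' \in S_i(\mathfrak r)\ \text{with}\ j + i \ne k, \end{cases}\]
extended bilinearly on each summand and then linearly across summands. Well-definedness is automatic from the directness of the sum together with the bilinearity of the product $S_j(\m^-) \times S_{k-j}(\mathfrak r) \to S_k(\p^-)$. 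By construction $f$ vanishes on $\bigoplus_{j + i \le k - 1} S_j(\m^-) \cdot \theta(S_i(\mathfrak r)) = U_{k-1}(\tilde\p^-)$, so $f \in \mathscr F_K^k(U(\tilde\p^-)^*)$, and $\psi_k(f) = g$ holds directly from the definition. I do not anticipate any serious obstacle: the entire argument reduces to careful bookkeeping between the PBW/symmetrization decomposition of $U(\tilde\p^-)$ and the standard graded decomposition of $S_k(\p^-)$, with $\theta$ serving as the bridge.
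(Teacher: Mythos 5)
Your proposal is correct and is essentially the paper's argument written out in full: the paper's one-line proof rests on exactly the fact you establish, namely that $\bigoplus_{j=0}^{k}S_j(\m^-)\,\theta(S_{k-j}(\mathfrak r))$ is a complement of $U_{k-1}(\tilde\p^-)$ in $U_k(\tilde\p^-)$, obtained from the PBW isomorphism $U(\tilde\p^-)\simeq S(\m^-)\otimes U(\mathfrak r)$ together with the symmetrization decomposition of $U(\mathfrak r)$. The kernel computation and the explicit construction of a preimage are the routine consequences the paper leaves implicit.
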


\begin{proof}
It follows from the fact that $\bigoplus_{j=0}^k S_j(\m^-)\otimes \theta(S_{k-j}(\mathfrak r))$ is a complement of $U_{k-1}(\tilde\p^-)$ in $U_k(\tilde\p^-)$.
\end{proof}

Endow $U(\tilde\p^-)^*$ with the dual representation of $A$ given by equation (\ref{actiondual}). Let $k\in\mathbb N$. Then $\mathscr F_K^k(U(\tilde\p^-)^*)$ is a left $A$-module by lemma \ref{Kf} and  $S_k(\p^-)^*$ is a left $U(\tilde\p)$-module (see subsection \ref{dualS}).
\begin{lm}
Let $k\in\mathbb N$.
Then the Kostant map $\psi_k$ is a morphism from the left $A$-module $\mathscr F_K^k(U(\tilde\p^-)^*)$ to the left $U(\tilde\p)$-module $S_k(\p^-)^*$.

\end{lm}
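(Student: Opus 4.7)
My plan is as follows. Since $\tilde\p = \mathfrak r \oplus \m$ is contained in both $A$ (via $\mathfrak r \hookrightarrow U(\mathfrak r)\subset A$ and $\m\hookrightarrow T(\m)\subset A$) and $U(\tilde\p)$, and since the natural algebra surjection $A\twoheadrightarrow U(\tilde\p)$ obtained by quotienting $T(\m)$ by the symmetric-algebra relations identifies the two $\tilde\p$-actions, it suffices to verify the intertwining $\psi_k(a.f)=a.\psi_k(f)$ for $a$ ranging over $\mathfrak r\cup\m$.

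For $a=z\in\mathfrak r$, $ad^{**}z=ad\,z$ by Equation (\ref{actionAr}), and $ad\,z$ acts as a derivation on $U(\tilde\p^-)$. Using that $\tilde\theta$ is $ad\,U(\mathfrak r)$-equivariant (Lemma \ref{partialsym}), I would expand
\begin{equation*}
ad\,z(s\,\theta(s'))=ad\,z(s)\,\theta(s')+s\,\theta(ad\,z(s'))
\end{equation*}
for $s\in S_j(\m^-)$, $s'\in S_{k-j}(\mathfrak r)$. Each summand has the shape $s''\theta(t'')$ with $s''\in S_{\bullet}(\m^-)$, $t''\in S_{\bullet}(\mathfrak r)$, so that applying $-f$ gives precisely $-\psi_k(f)\bigl(ad^*z(s)\,s'+s\cdot ad^*z(s')\bigr)=-\psi_k(f)(ad^*z(ss'))=(z.\psi_k(f))(ss')$, the last equality using Equation~(\ref{eg2}) and the derivation property of $ad^*z$.

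For $a=x\in\m$ the computation is more delicate. Equation (\ref{actionA}) gives $ad^{**}x(s\theta(s'))=\tilde\theta(ad^*x(s))\,\theta(s')$, and writing $ad^*x(s)=\sum_i s_i z_i\in S_{j-1}(\m^-)\mathfrak r$ (Equation (\ref{ad*})) together with the fact that $\tilde\theta$ is the identity on $S_{j-1}(\m^-)\otimes\mathfrak r$ yields
\begin{equation*}
\psi_k(x.f)(ss')=-\sum_i f(s_i\,z_i\,\theta(s')).
\end{equation*}
The core step is the filtration identity
\begin{equation*}
z_i\,\theta(s')=\theta(z_i s')+r_i,\qquad r_i\in U_{k-j}(\mathfrak r),
\end{equation*}
a standard consequence of $\theta$ being a symmetrization (both sides have the same principal symbol $z_i s'\in S_{k-j+1}(\mathfrak r)$). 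Multiplying by $s_i$ places $s_i r_i\in S_{j-1}(\m^-)U_{k-j}(\mathfrak r)\subset U_{k-1}(\tilde\p^-)$, which is annihilated by $f\in\mathscr F_K^k(U(\tilde\p^-)^*)$. Therefore
\begin{equation*}
\psi_k(x.f)(ss')=-\sum_i f(s_i\,\theta(z_i s'))=-\psi_k(f)(ad^*x(s)\cdot s').
\end{equation*}
Finally, $ad^*x(s')=0$ by Equation (\ref{eg1}) extended by derivation, so $ad^*x(ss')=ad^*x(s)\cdot s'$ and the right-hand side equals $(x.\psi_k(f))(ss')$.

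The only genuine obstacle is pinning down the filtration estimate $z_i\theta(s')-\theta(z_is')\in U_{k-j}(\mathfrak r)$: it is the precise arithmetic that ensures $\psi_k$ respects the dual-symmetrization structure on both sides, and is really the reason the whole setup is consistent. Everything else unfolds directly from the definitions of $\psi_k$, $ad^{**}$, and $\tilde\theta$, together with the vanishing $ad^*x(\mathfrak r)=0$ for $x\in\m$.
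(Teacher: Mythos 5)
Your proposal is correct and follows essentially the same route as the paper: in both cases one checks the intertwining only on the generators $z\in\mathfrak r$ (using the derivation property of $ad\,z$ and the $ad\,U(\mathfrak r)$-equivariance of the symmetrisation) and $x\in\m$ (writing $ad^*x(s)=\sum_i s_iz_i$ and observing that $s_iz_i\theta(s')-s_i\theta(z_is')\in S_{j-1}(\m^-)U_{k-j}(\mathfrak r)\subset U_{k-1}(\tilde\p^-)$ is killed by $f\in\mathscr F_K^k(U(\tilde\p^-)^*)$). Your explicit identification of the key filtration estimate $z_i\theta(s')-\theta(z_is')\in U_{k-j}(\mathfrak r)$ is exactly the congruence the paper invokes modulo $U_{k-1}(\tilde\p^-)$.
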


\begin{proof}

Let $f\in\mathscr F_K^k(U(\tilde\p^-)^*)$ and $0\le j\le k$, $j\in\mathbb N$, $s\in S_j(\m^-)$ and $s'\in S_{k-j}(\mathfrak r)$.

Assume firstly that $x\in\m$.

Then by equation (\ref{psi}) \begin{align*}\psi_k(x.f)(ss')&=(x.f)(s\,\theta(s'))\\
&=-f(ad^{**}x(s\,\theta(s')))\\
&=-f(\tilde\theta(ad^*x(s))\theta(s'))\\
\end{align*}
by equations (\ref{actiondual}) and (\ref{actionA}).

Write $ad^*x(s)=\sum_{i\in I} s_iz_i$ with $s_i\in S_{j-1}(\m^-)$ and $z_i\in \mathfrak r$ for all $i\in I$, by equation (\ref{ad*}).

Then \begin{equation}\psi_k(x.f)(ss')=-\sum_{i\in I}f(s_i\theta(z_i)\theta(s'))\label{morph}\end{equation} by definition of $\tilde\theta$ (see subsection \ref{partialsym}).

On the other hand, one has by equation (\ref{actiondualbis}) :

\begin{align*} (x.\psi_k(f))(ss')&=-\psi_k(f)(ad^*x(ss'))\\
&=-\psi_k(f)(ad^*x(s)s')\end{align*}
since $ad^*x(s')=0$ by equation (\ref{eg1}).

Then by equation (\ref{psi})
\begin{equation} (x.\psi_k(f))(ss')
=-\sum_{i\in I}f(s_i\theta(z_is'))\label{morphbis}
\end{equation}

But, for all $i\in I$, $s_i\theta(z_is')=s_i\theta(z_i)\theta(s') \mod U_{k-1}(\tilde\p^-)$. Equations (\ref{morph}) and (\ref{morphbis}) imply that
$$(\psi_k(x.f)-(x.\psi_k(f)))(ss')=0$$ since $f(U_{k-1}(\tilde\p^-))=0$.

Now assume that $x\in\mathfrak r$.

\begin{align*}\psi_k(x.f)(ss')&=(x.f)(s\,\theta(s'))\\
&=-f(ad\,x(s\,\theta(s')))\\
&=-f(ad\,x(s)\theta(s')+s\,ad\,x(\theta(s')))\\
\end{align*} by equation (\ref{eg2}).

Then $$\psi_k(x.f)(ss')=-f(ad\,x(s)\theta(s')+s\,\theta(ad\,x(s')))$$ since $\theta$ is a morphism of $U(\mathfrak r)$-modules for the adjoint action.

On the other hand

\begin{align*}x.\psi_k(f)(ss')&=-\psi_k(f)(ad\,x(ss'))\\
 &=-\psi_k(f)(ad\,x(s)s'+s\,ad\,x(s'))\\
 &=-f(ad\,x(s)\theta(s')+s\,\theta(ad\,x(s')))\\
 \end{align*}
 This completes the lemma.
\end{proof}

\subsection{An isomorphism of $U(\tilde\p)$-modules}\label{isomtildep}
Recall subsections \ref{Kf} and \ref{Km}.

By proposition and lemma \ref{Km} we have that:

\begin{lm}
For all $k\in\mathbb N$, the induced morphism (still denoted by $\psi_k$) is an isomorphism of left $U(\tilde\p)$-modules from $gr_K^k(U(\tilde\p^-)^*)$ to $S_k(\p^-)^*$.

\end{lm}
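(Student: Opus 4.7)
The plan is to derive this lemma as an immediate consequence of the proposition and the lemma of subsection \ref{Km}. First, the proposition of \ref{Km} identifies $\ker\psi_k=\mathscr F_K^{k+1}(U(\tilde\p^-)^*)$, so $\psi_k$ descends canonically to an injective linear map $gr_K^k(U(\tilde\p^-)^*)\longrightarrow S_k(\p^-)^*$, still denoted $\psi_k$. The same proposition gives surjectivity of $\psi_k$, which is inherited by the induced map, so $\psi_k$ is already a bijection of vector spaces.

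It then remains to promote this to an isomorphism of $U(\tilde\p)$-modules. By lemma \ref{Kf} the left $A$-action on $U(\tilde\p^-)^*$ defined by equation (\ref{actiondual}) preserves the filtration $\mathscr F_K^\bullet$, hence induces an $A$-module structure on $gr_K^k(U(\tilde\p^-)^*)$ via equation (\ref{actiongradK}). By the lemma of subsection \ref{Km}, for every $x\in\mathfrak r\cup\m$ and every $f\in\mathscr F_K^k(U(\tilde\p^-)^*)$ one has $\psi_k(x.f)=x.\psi_k(f)$, where on the left $x$ acts through $A$ and on the right through $U(\tilde\p)$. Passing to the quotient $gr_K^k(U(\tilde\p^-)^*)$, this intertwining is preserved on every generator.

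Finally, since $S_k(\p^-)^*$ carries a bona fide $U(\tilde\p)$-module structure (subsection \ref{dualS}) and the induced $\psi_k$ is a bijection intertwining the actions of every element of the generating set $\mathfrak r\cup\m$, the induced $A$-action on $gr_K^k(U(\tilde\p^-)^*)$ must factor through the canonical surjection of algebras $A\twoheadrightarrow U(\tilde\p)$ obtained by quotienting $T(\m)$ by its commutativity relations (relations which are automatically satisfied after applying $\psi_k$, by surjectivity and the fact that the target is a $U(\tilde\p)$-module). This identifies the two module structures and yields the desired isomorphism of left $U(\tilde\p)$-modules. I do not anticipate any real obstacle, since the substantive content—in particular the compatibility of $\psi_k$ with the action of $\m$, which ultimately rests on the proposition of subsection \ref{gra}—has already been established upstream, and the present statement is a purely formal consequence.
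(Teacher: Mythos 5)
Your proof is correct and follows essentially the same route as the paper: bijectivity of the induced map and the intertwining on the generators $\mathfrak r\cup\m$ come from the proposition and lemma of subsection \ref{Km}, and the paper likewise deduces the $U(\tilde\p)$-module structure on $gr_K^k(U(\tilde\p^-)^*)$ by transporting it through $\psi_k$ from the target $S_k(\p^-)^*$. The only difference is that the paper supplements this formal argument with a direct verification that $(ad^{**}x\circ ad^{**}x'-ad^{**}x'\circ ad^{**}x)(U_k(\tilde\p^-))\subset U_{k-1}(\tilde\p^-)$ for $x,\,x'\in\m$, a computation your transport-of-structure argument renders logically unnecessary.
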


\begin{proof}
We already know that the left $A$-module structure on $U(\tilde\p^-)^*$ given by equation (\ref{actiondual}) induces a left $A$-module structure on $gr_K^k(U(\tilde\p^-)^*)$ by the invariance of the Kostant filtration under the left action of $A$ (see equation (\ref{actiongradK})).
Then the induced morphism $\psi_k$ is an isomorphism from the left $A$-module $gr_K^k(U(\tilde\p^-)^*)$ to the left $U(\tilde\p)$-module $S_k(\p^-)^*$. Moreover since $S_k(\p^-)^*$ is a left $U(\tilde\p)$-module, it follows that it is the same for $gr_K^k(U(\tilde\p^-)^*)$.
Let us verify directly that $gr_K^k(U(\tilde\p^-)^*)$ is indeed a left $U(\tilde\p)$-module.  Let $x,\,x'\in\m$ and $u\in U_k(\tilde\p^-)$.  One checks that
\begin{equation}(ad^{**}x\circ ad^{**}x'-ad^{**}x'\circ ad^{**}x)(u)\in U_{k-1}(\tilde\p^-).\label{com}\end{equation}

Indeed  write $u=su'$ with  $s\in S_j(\m^-)$ and $u'\in U_{k-j}(\mathfrak r)$ for $0\le j\le k$. If $j=0$ then $ad^{**}x\circ ad^{**}x'(su')=0=ad^{**}x'\circ ad^{**}x(su')$ by equation (\ref{actionnul}). Now assume that $1\le j\le k$ and take $s=y_1\cdots y_j\in S_j(\m^-)$, with $y_i\in\m^-$ for all $1\le i\le j$.
By definition of $ad^{**}$ (see equation (\ref{actionA})), we obtain that
\begin{align*}
ad^{**}x\circ ad^{**}x'(su')&=\sum_{1\le i\neq k\le j}\prod_{t\not\in\{ i,\,k\}}y_t\theta(ad^*x(y_k))\theta(ad^*x'(y_i))u'\\
&=\sum_{1\le i\neq k\le j}\prod_{t\not\in\{ i,\,k\}}y_t\theta(ad^*x(y_k)ad^*x'(y_i))u'\mod U_{k-1}(\tilde\p^-)\\
&=\sum_{1\le i\neq k\le j}\prod_{t\not\in\{ i,\,k\}}y_t\theta(ad^*x(y_i)ad^*x'(y_k))u'\mod U_{k-1}(\tilde\p^-)\\
&=ad^{**}x'\circ ad^{**}x(su')\mod U_{k-1}(\tilde\p^-)\\
\end{align*}
since for all $a,\,b\in\mathfrak r$, one has $\theta(a)\theta(b)=\theta(ab)\mod U_1(\mathfrak r)$.

One deduces that, for all $f\in\mathscr F_K^k(U(\tilde\p^-)^*)$, for all $x,\,x'\in\m$, \begin{equation*}x.(x'.f)-x'.(x.f)\in\mathscr F_K^{k+1}(U(\tilde\p^-)^*)\end{equation*} and then
\begin{equation*}x.(x'.gr_K^k(f))=x'.(x.gr_K^k(f)).\end{equation*}
\end{proof}

Recall the notation in the proof of lemma \ref{dualS}. Let $k\in\mathbb N$ and set  $j_k=f_k^{-1}$, which is an isomorphism of $U(\tilde\p)$-modules from $S_k(\p^-)^*$ to $S_k(\tilde\p)$.  Moreover set $\psi_k^0=j_k\circ\psi_k$ and $\psi^0=\bigoplus_{k\in\mathbb N}\psi_k^0$ : this is by the above an isomorphism of $U(\tilde\p)$-modules from $gr_K(U(\tilde\p^-)^*)$ to $S(\tilde\p)$. Now as in \cite[6.6]{FJ2} set $\tilde\psi_k=\frac{1}{k!}\psi^0_k$ and $\tilde\psi=\bigoplus_{k\in\mathbb N}\tilde\psi_k$.
One deduces the following, as in \cite[6.6]{FJ2}. 
\begin{prop}
$\tilde\psi$ is an isomorphism of $U(\tilde\p)$-modules and of algebras from $gr_K(U(\tilde\p^-)^*)$ to $S(\tilde\p)$.

\end{prop}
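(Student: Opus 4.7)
The plan is to assemble the proposition from the material already established and to verify one Kostant-type combinatorial identity.

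The $U(\tilde\p)$-module claim is essentially automatic. By the preceding lemma, $\psi_k$ is an isomorphism of $U(\tilde\p)$-modules from $gr_K^k(U(\tilde\p^-)^*)$ to $S_k(\p^-)^*$, and by the lemma of subsection \ref{dualS} the map $j_k = f_k^{-1}$ is an isomorphism of $U(\tilde\p)$-modules from $S_k(\p^-)^*$ to $S_k(\tilde\p)$. The scalar factor $\frac{1}{k!}$ is trivially $U(\tilde\p)$-equivariant. Composing these three ingredients and summing over $k$, one obtains that $\tilde\psi$ is already an isomorphism of $U(\tilde\p)$-modules from $gr_K(U(\tilde\p^-)^*)$ onto $S(\tilde\p)$.

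It remains to verify that $\tilde\psi$ respects multiplication. This I would check homogeneously: given $f \in \mathscr F_K^k(U(\tilde\p^-)^*)$ and $g \in \mathscr F_K^{k'}(U(\tilde\p^-)^*)$, I would establish the identity $\tilde\psi_{k+k'}(fg) = \tilde\psi_k(f)\,\tilde\psi_{k'}(g)$ in $S_{k+k'}(\tilde\p)$. Unwinding the definitions, this amounts to evaluating both sides on arbitrary test elements of the form $s\,\theta(s')\in U_{k+k'}(\tilde\p^-)$ with $s\in S_j(\m^-)$ and $s'\in S_{k+k'-j}(\mathfrak r)$, using on the left the product structure on $U(\tilde\p^-)^*$ dual to the coproduct of $U(\tilde\p^-)$, and on the right the bilinear form $K_{k+k'}$ of subsection \ref{dualS} together with the commutative product of $S(\tilde\p)$. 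Expanding the coproduct of $s\,\theta(s')$ on primitive generators and keeping only the terms that are not annihilated by $f\otimes g$ (i.e., those with exactly $k$ factors in the first slot and $k'$ in the second), I obtain the sum over the $\binom{k+k'}{k}$ splittings of factors between the two tensor slots; the resulting combinatorial factor $\frac{(k+k')!}{k!\,k'!}$ is exactly compensated by the normalizations $\frac{1}{k!}$ and $\frac{1}{k'!}$ built into $\tilde\psi_k,\tilde\psi_{k'}$, compared with $\frac{1}{(k+k')!}$ built into $\tilde\psi_{k+k'}$.

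The main obstacle is the careful treatment of the partial symmetrization $\tilde\theta$ from subsection \ref{partialsym}: when expanding the coproduct on a mixed monomial $s\,\theta(s')$, one must verify that reordering $\m^-$-factors past $\mathfrak r$-factors produces only correction terms lying in strictly lower filtered degree, which therefore vanish upon passage to $gr_K$. Once this filtered comparison is in place, the multiplicativity check reduces to the fully-symmetric Kostant situation carried out in \cite[6.6]{FJ2}, whose argument transposes verbatim to the present generalized setting, and the proposition follows.
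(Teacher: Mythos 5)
Your proposal is correct and follows essentially the same route as the paper: the $U(\tilde\p)$-module part is obtained by composing the isomorphisms $\psi_k$ and $j_k=f_k^{-1}$ from the preceding lemmas, and the multiplicativity is the Kostant-type computation with the $\frac{1}{k!}$ normalization, which the paper simply delegates to \cite[6.6]{FJ2}. You supply more detail than the paper does (the coproduct expansion, the binomial factor, and the observation that reordering past $\tilde\theta$ only produces terms of lower filtered degree, killed in $gr_K$), but the underlying argument is the same.
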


\subsection{}\label{inj}

Denote by $gr_K(\widetilde{C}_{\mathfrak r})$ the graded algebra associated to  the induced generalized Kostant filtration on $\widetilde{C}_{\mathfrak r}$, and by $gr_K(\widetilde{C}_{\mathfrak r}^{U(\mathfrak r')})$ the graded algebra associated to  the induced generalized Kostant filtration on $\widetilde{C}_{\mathfrak r}^{U(\mathfrak r')}$. Denote by $(gr_K(U(\tilde\p^-)^*))^{U(\tilde\p')}$ the algebra of invariants in $gr_K(U(\tilde\p^-)^*)$ by the action of $U(\tilde\p')$ given by equation (\ref{actiongradK}). We have that
$$gr_K(\widetilde{C}_{\mathfrak r})\subset gr_K(U(\tilde\p^-)^*)$$
and by Proposition \ref{gra} that
$$gr_K(\widetilde{C}_{\mathfrak r}^{U(\mathfrak r')})\subset (gr_K(U(\tilde\p^-)^*))^{U(\tilde\p')}.$$
Denote also by $S(\tilde\p)^{U(\tilde\p')}$ the algebra of invariants in $S(\tilde\p)$ by the adjoint action of $U(\tilde\p')$ : this is also the algebra of semi-invariants in $S(\tilde\p)$, which we denote by $Sy(\tilde\p)$.
From Proposition \ref{isomtildep}, one deduces the following.

\begin{thm}

One has that 
$$\tilde\psi(gr_K(\widetilde{C}_{\mathfrak r}^{U(\mathfrak r')}))\subset Sy(\tilde\p).$$
\end{thm}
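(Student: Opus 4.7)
\medskip

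\noindent\textbf{Proof plan.} The plan is to reduce the theorem to the two facts already established: that $\tilde\psi$ is an isomorphism of $U(\tilde\p)$-modules and of algebras (Proposition \ref{isomtildep}), and that the action of $\m\subset A$ annihilates every graded piece $gr_K^k(f)$ coming from $\widetilde C_{\mathfrak r}$ (equation (\ref{annulm}) in Proposition \ref{gra}). Since $\tilde\p'=\mathfrak r'\ltimes(\m)^a$, an element of $S(\tilde\p)$ lies in $Sy(\tilde\p)=S(\tilde\p)^{\tilde\p'}$ if and only if it is annihilated by both $\mathfrak r'$ and $\m$ under the adjoint action. I will verify these two invariance conditions separately on the image under $\tilde\psi$.

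By linearity it suffices to treat a homogeneous element $gr_K^k(f)$ with $f\in\widetilde C_{\mathfrak r}^{U(\mathfrak r')}\cap\mathscr F_K^k(U(\tilde\p^-)^*)$. For the $\mathfrak r'$-invariance, recall that by equation (\ref{actionpsurdual}) the action of $U(\mathfrak r)\subset A$ on $U(\tilde\p^-)^*$ given by (\ref{actiondual}) coincides with the coadjoint action, so by definition of $\widetilde C_{\mathfrak r}^{U(\mathfrak r')}$ we have $z.f=0$ for all $z\in\mathfrak r'$. Since the filtration $\mathscr F_K$ is $A$-invariant (Lemma \ref{Kf}), passing to the graded space gives $z.gr_K^k(f)=0$, and then applying $\tilde\psi$ (which is $U(\tilde\p)$-equivariant, hence $U(\mathfrak r')$-equivariant) yields $z.\tilde\psi(gr_K^k(f))=0$ for every $z\in\mathfrak r'$.

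For the $\m$-invariance, one invokes Proposition \ref{gra} directly. Since $\widetilde C_{\mathfrak r}^{U(\mathfrak r')}\subset\widetilde C_{\mathfrak r}$, equation (\ref{annulm}) gives $x.gr_K^k(f)=0$ in $gr_K(U(\tilde\p^-)^*)$ for every $x\in\m$. Again by equivariance of $\tilde\psi$, it follows that $x.\tilde\psi(gr_K^k(f))=0$ in $S(\tilde\p)$, where the left-hand action is that of $\m\subset\tilde\p$ extended by derivation from the adjoint action on $\tilde\p$. Combining both invariances and using $\tilde\p'=\mathfrak r'\ltimes(\m)^a$ together with the Leibniz rule, one concludes that $\tilde\psi(gr_K^k(f))$ is annihilated by all of $\tilde\p'$, i.e.\ belongs to $S(\tilde\p)^{\tilde\p'}=Sy(\tilde\p)$.

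The only genuine difficulty has already been absorbed into Proposition \ref{gra}; granting that, the present statement is essentially a transport-of-structure argument. The one point that deserves a brief check is that the $\m$-action on $gr_K(U(\tilde\p^-)^*)$ coming from $A$ corresponds under $\tilde\psi$ to the adjoint action of $\m$ on $S(\tilde\p)$ and not to some twist of it; this is automatic from the lemma in subsection \ref{Km} showing that each Kostant map $\psi_k$ is $A$-equivariant when $S_k(\p^-)^*$ carries the dual of $ad^*$, together with the identification in Lemma \ref{dualS} of $S_k(\p^-)^*$ with $S_k(\tilde\p)$ endowed with the adjoint action of $\tilde\p$. Once this compatibility is noted, the proof reduces to the two-line verification above.
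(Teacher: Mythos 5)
Your proposal is correct and follows essentially the same route as the paper: the paper likewise observes that $gr_K(\widetilde{C}_{\mathfrak r}^{U(\mathfrak r')})\subset (gr_K(U(\tilde\p^-)^*))^{U(\tilde\p')}$ via Proposition \ref{gra} (the $\m$-annihilation) together with the $\mathfrak r'$-invariance, and then transports through the $U(\tilde\p)$-equivariant algebra isomorphism $\tilde\psi$ of Proposition \ref{isomtildep}. Your write-up merely makes explicit the two invariance checks and the action-compatibility that the paper leaves implicit.
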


\subsection{}\label{cha}

Let $M$ denote a left $\h$-module such that each of its weight spaces $M_{\nu}=\{m\in M\mid \forall h\in\h,\, h.m=\nu(h)m\}$, for all $\nu\in\h^*$, is finite dimensional.
Then one may define (\cite[3.4.7]{J2}) the  formal character ${\rm ch}\, M$ of  $M$ as follows. 
$${\rm ch}\, M=\sum_{\nu\in\h^*}\dim M_{\nu}\,e^{\nu}.$$

Recall the set $E(\pi')$ in subsection \ref{polynomial},  and for all $\Gamma\in E(\pi')$, set $\delta_{\Gamma}=w_0'd_{\Gamma}-w_0d_{\Gamma}$, with $d_{\Gamma}=\sum_{\gamma\in\Gamma}\varpi_{\gamma}$. 
By Prop. \ref{filtrationC} and Theorem \ref{polynomial}, the algebra of invariants $\widetilde{C}_{\mathfrak r}^{U(\mathfrak r')}$ is a polynomial algebra in $\lvert E(\pi')\rvert$ variables, each of them having $\delta_{\Gamma}$, $\Gamma\in E(\pi')$, as an $\h$-weight. Moreover for $\g$ simple and $\pi'\subsetneq\pi$ that is, for $\p\subsetneq\g$, one has that $\delta_{\Gamma}\in P^+(\pi)\setminus\{0\}$ for all $\Gamma\in E(\pi')$ by \cite[5.4.3]{FJ2}. Then in this case ${\rm ch}\,\widetilde{C}_{\mathfrak r}^{U(\mathfrak r')}$ is well defined.

\begin{prop}
Assume, for all $\nu\in\h^*$, that the weight space $Sy(\tilde\p)_{\nu}$  is finite dimensional, so that the formal character ${\rm{ch}}\,Sy(\tilde\p)$ is well defined. Then
 $${\rm ch}\,\widetilde{C}_{\mathfrak r}^{U(\mathfrak r')}\le {\rm ch}\,Sy(\tilde\p)$$ namely
$$\prod_{\Gamma\in E(\pi')}(1-e^{\delta_{\Gamma}})^{-1}\le {\rm ch}\,Sy(\tilde\p).$$

\end{prop}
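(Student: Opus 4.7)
The plan is to deduce the character inequality directly from the injection of algebras established in Theorem~\ref{inj} combined with the polynomiality statement of Proposition~\ref{filtrationC}. The argument splits into three ingredients: (i) the injection $\tilde\psi$ restricted to $gr_K(\widetilde{C}_{\mathfrak r}^{U(\mathfrak r')})$ is compatible with the $\h$-action on both sides, (ii) passing to the associated graded for the generalized Kostant filtration preserves the formal character, and (iii) the polynomiality of $\widetilde{C}_{\mathfrak r}^{U(\mathfrak r')}$ allows an explicit computation of ${\rm ch}\,\widetilde{C}_{\mathfrak r}^{U(\mathfrak r')}$.

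First I would check that $\tilde\psi$ restricted to $gr_K(\widetilde{C}_{\mathfrak r}^{U(\mathfrak r')})$ is a morphism of $U(\h)$-modules. Proposition~\ref{isomtildep} already states that $\tilde\psi$ is an isomorphism of $U(\tilde\p)$-modules from $gr_K(U(\tilde\p^-)^*)$ to $S(\tilde\p)$. Since $\h\subset\mathfrak r\subset\tilde\p$, the induced action of $\h$ on $gr_K(U(\tilde\p^-)^*)$ (derived from the dual representation of $ad^{**}$) corresponds through $\tilde\psi$ to the adjoint action of $\h$ on $S(\tilde\p)$. Composing the inclusion $gr_K(\widetilde{C}_{\mathfrak r}^{U(\mathfrak r')})\hookrightarrow gr_K(U(\tilde\p^-)^*)$ with $\tilde\psi$ therefore yields an injective morphism of $U(\h)$-modules into $Sy(\tilde\p)$, so
$${\rm ch}\,gr_K(\widetilde{C}_{\mathfrak r}^{U(\mathfrak r')})\le {\rm ch}\,Sy(\tilde\p)$$
once both characters are well defined.

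Next I would argue that ${\rm ch}\,gr_K(\widetilde{C}_{\mathfrak r}^{U(\mathfrak r')})={\rm ch}\,\widetilde{C}_{\mathfrak r}^{U(\mathfrak r')}$. The generalized Kostant filtration on $U(\tilde\p^-)^*$ is $\h$-stable since the canonical filtration $(U_k(\tilde\p^-))_{k\in\mathbb N}$ is preserved by the adjoint action of $\h\subset\mathfrak r$; hence each filtration piece, restricted to $\widetilde{C}_{\mathfrak r}^{U(\mathfrak r')}$, is a weight submodule and the weight-space dimensions are transferred unchanged to the associated graded algebra. Combining with Proposition~\ref{filtrationC} and Theorem~\ref{polynomial}, $\widetilde{C}_{\mathfrak r}^{U(\mathfrak r')}$ is a polynomial algebra in $|E(\pi')|$ algebraically independent generators $c_{d_\Gamma}$, each of $\h$-weight $\delta_\Gamma=w_0'd_\Gamma-w_0d_\Gamma$ for $\Gamma\in E(\pi')$. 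As recalled in subsection~\ref{cha}, the assumption that $\g$ is simple and $\pi'\subsetneq\pi$ forces each $\delta_\Gamma$ into $P^+(\pi)\setminus\{0\}$, so ${\rm ch}\,\widetilde{C}_{\mathfrak r}^{U(\mathfrak r')}$ is well defined and equals $\prod_{\Gamma\in E(\pi')}(1-e^{\delta_\Gamma})^{-1}$, which closes the argument.

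The main obstacle is the verification that the $\h$-actions on $gr_K(U(\tilde\p^-)^*)$ and on $S(\tilde\p)$ genuinely intertwine through $\tilde\psi$: one needs to unravel the definition of the dual representation of $ad^{**}$ and check that its restriction to $\h$ reduces to the expected weight action on matrix coefficients. Concretely, for $c_{\tilde\xi,\,\tilde v}\in\widetilde{C}_{\mathfrak r}$ with $\tilde\xi$, $\tilde v$ weight vectors, one must confirm via equation~(\ref{actionC}) that $h.c_{\tilde\xi,\,\tilde v}=(\mathrm{wt}(\tilde v)-\mathrm{wt}(\tilde\xi))(h)\,c_{\tilde\xi,\,\tilde v}$ and that this weight is preserved by $\psi_k$ and $j_k$. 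Once this bookkeeping is done, the remaining steps are purely formal and the product formula for the character of a polynomial algebra finishes the proof.
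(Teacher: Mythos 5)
Your proposal is correct and follows essentially the same route as the paper: the injection of Theorem~\ref{inj}, the fact that passing to $gr_K$ preserves the formal character, and the product formula coming from the polynomiality of $\widetilde{C}_{\mathfrak r}^{U(\mathfrak r')}$. The only point worth making explicit (and which the paper does stress) is that the equality ${\rm ch}\,gr_K(\widetilde{C}_{\mathfrak r}^{U(\mathfrak r')})={\rm ch}\,\widetilde{C}_{\mathfrak r}^{U(\mathfrak r')}$ uses not just $\h$-stability but the separatedness of the generalized Kostant filtration, so that each finite-dimensional weight space meets $\mathscr F_K^N(U(\tilde\p^-)^*)$ trivially for $N$ large and hence is carried isomorphically to its associated graded.
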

\begin{proof}
Recall that the generalized Kostant filtration $\mathscr F_K$ is decreasing, separated and that $\mathscr F_K^0(U(\tilde\p^-)^*)=U(\tilde\p^-)^*$. Then, for every finite dimensional subspace $V$ of $U(\tilde\p^-)^*$, there exists $N\in\mathbb N$ such that $V\cap\mathscr F_K^N(U(\tilde\p^-)^*)=\{0\}$. One deduces easily that  the graded vector space $gr_{K}(V)$ associated to the induced generalized Kostant filtration on $V$ is isomorphic to $V$. 
Using Theorem \ref{inj} and  a same argument as in \cite[7.1]{FJ2} completes the proof.
\end{proof}

\end{document}